\newcounter{dummy}
\theoremstyle{plain}% Theorem-like structures provided by amsthm.sty
\newtheorem{theorem}{Theorem}[section]
\newtheorem{lemma}[theorem]{Lemma}
\newtheorem{proposition}[theorem]{Proposition}
\theoremstyle{definition}
\newtheorem{definition}[theorem]{Definition}
\newtheorem{example}[theorem]{Example}
\theoremstyle{remark}
\newtheorem{remark}{Remark}
\newcommand{\norm}[1]{\ensuremath{\lVert#1\rVert}}
\newcommand{\normone}[1]{\ensuremath{\lVert#1\rVert_1}}
\newcommand{\norminfty}[1]{\ensuremath{\lVert#1\rVert_{\infty}}}
\newcommand{\RR}{\ensuremath{\mathbb{R}}}
\newcommand{\NN}{\ensuremath{\mathbb{N}}}
\newcommand{\cU}{\mathcal{U}}
\newcommand{\mM}{Z}
\newcommand{\Id}{\ensuremath{\mathrm{Id}}}
\newcommand{\taumin}{\ensuremath{\tau_{\mathrm{min}}}}
\newcommand{\taumax}{\ensuremath{\tau_{\mathrm{max}}}}
\newcommand{\dd}{\ensuremath{\mathrm{d}}}
\newcommand{\DD}{\ensuremath{\mathrm{D}}}
\DeclareMathOperator{\inte}{Int}
\DeclareMathOperator{\dist}{dist}
\DeclareMathOperator{\cl}{cl}
\DeclareMathOperator{\diag}{diag}
\DeclareMathOperator{\card}{card}
\DeclareMathOperator{\bd}{bd}
\newcommand{\ev}{\ensuremath{\mathbf{e}}}
\DeclareFontFamily{U}{mathb}{\hyphenchar\font45}
\DeclareFontShape{U}{mathb}{m}{n}{
<-6> mathb5 <6-7> mathb6 <7-8> mathb7
<8-9> mathb8 <9-10> mathb9
<10-12> mathb10 <12-> mathb12
}{}
\DeclareSymbolFont{mathb}{U}{mathb}{m}{n}
\DeclareMathSymbol{\llcurly}{\mathrel}{mathb}{"CE}
\DeclareMathSymbol{\ggcurly}{\mathrel}{mathb}{"CF}
\DeclareFontFamily{OMX}{MnSymbolE}{}
\DeclareSymbolFont{MnLargeSymbols}{OMX}{MnSymbolE}{m}{n}
\DeclareFontShape{OMX}{MnSymbolE}{m}{n}{
    <-6>  MnSymbolE5
   <6-7>  MnSymbolE6
   <7-8>  MnSymbolE7
   <8-9>  MnSymbolE8
   <9-10> MnSymbolE9
  <10-12> MnSymbolE10
  <12->   MnSymbolE12
}{}
\DeclareFontShape{OMX}{MnSymbolE}{b}{n}{
    <-6>  MnSymbolE-Bold5
   <6-7>  MnSymbolE-Bold6
   <7-8>  MnSymbolE-Bold7
   <8-9>  MnSymbolE-Bold8
   <9-10> MnSymbolE-Bold9
  <10-12> MnSymbolE-Bold10
  <12->   MnSymbolE-Bold12
}{}
\let\llangle\@undefined
\let\rrangle\@undefined
\DeclareMathDelimiter{\llangle}{\mathopen}%
                     {MnLargeSymbols}{'164}{MnLargeSymbols}{'164}
\DeclareMathDelimiter{\rrangle}{\mathclose}%
                     {MnLargeSymbols}{'171}{MnLargeSymbols}{'171}
\DeclareFontFamily{U}{mathx}{\hyphenchar\font45}
\DeclareFontShape{U}{mathx}{m}{n}{
      <5> <6> <7> <8> <9> <10>
      <10.95> <12> <14.4> <17.28> <20.74> <24.88>
      mathx10
      }{}
\DeclareSymbolFont{mathx}{U}{mathx}{m}{n}
\DeclareMathAccent{\widecheck}{0}{mathx}{"71}
\DeclareMathAccent{\wideparen}{0}{mathx}{"75}
\newcommand*{\textoverline}[1]{$\overline{\hbox{#1}}\m@th$}
\newcommand\myitem[1][]{\item[#1]\refstepcounter{dummy}\def\@currentlabel{#1}}
\begin{document}

\articletype{}

\title{Existence of the carrying simplex for a retrotone map}

\author{
\name{Janusz Mierczyński\textsuperscript{a}\thanks{CONTACT J. Mierczyński. Email: mierczyn@pwr.edu.pl} and Stephen Baigent\textsuperscript{b}}
\affil{\textsuperscript{a}Faculty of Pure and Applied Mathematics, Wrocław University of Science and Technology, Wybrzeże Wyspiańskiego 27, PL 50-370 Wrocław, Poland; \textsuperscript{b}Department of Mathematics, University College London, Gower Street, London WC1E 6BT, United Kingdom of Great Britain and Northern Ireland}
}

\maketitle

\begin{abstract}
We present a dynamical approach to the study of unordered, attracting manifolds of retrotone maps commonly known as carrying simplices. Our approach is novel in that it uses  the radial representation of unordered manifolds over the probability simplex coupled with distances between these manifolds measured by way of the Harnack and Hausdorff metrics. We establish Kuratowski convergence of radial representations of unordered manifolds to a unique function which then provides the locally Lipschitz radial representation of the carrying simplex.
\end{abstract}

\begin{keywords}
Competitive maps; retrotone maps; carrying simplices; attractors
\end{keywords}

This is an Accepted Manuscript of an article published by Taylor \& Francis in JOURNAL OF DIFFERENCE EQUATIONS AND APPLICATIONS on 23 November 2023, available at: \href{https://doi.org/10.1080/10236198.2023.2285394}{https://doi.org/10.1080/10236198.2023.2285394}.

\section*{Introduction}
There is a large class of maps widely used  to study discrete-time population dynamics that preserve all faces of the first orthant of Euclidean space that are known as {\em Kolmogorov maps}. By varying the functional forms of these maps, all the possible types of species interactions for populations where there is no overlap of generations can be modelled, including predator-prey, mutualism and competition to name a few. Here we are interested in a special {\em subclass} of those Kolmogorov maps that feature only competition which, following  recent research trends, we call {\em retrotone}. While much of the existing research uses  {\em competitive map} where we use {\em retrotone map} - indeed the distinction is not crucial (necessary) for continuous time analogues - we prefer to use `retrotone' to describe maps that represent competitive interactions that {\em also} have the specific feature that they preserve a convex cone backwards in time. The Leslie--Gower map is an example of a competition model that is globally a retrotone map \cite{JN-JMB2017}, but not all competitive maps are retrotone, even on their global attractor; the planar Ricker map discussed later in Subsection~\ref{subsect:2DRicker} is a classic example of a map that represents competitive interactions, but is only retrotone for a limited set of parameter values.

The focus of the present work is the {\em carrying simplex} which is a well-studied feature of population models with competitive interactions  \cite{dMS, SmithJDE, HirschCS, JandW, D-W-Y, HirschJBD, RH}. The carrying simplex is a Lipschitz, codimension-one and compact invariant manifold that attracts all nonzero points, and that is unordered, which means that the carrying simplex is non-increasing in each coordinate direction. As we discuss later there are several ways of characterising the carrying simplex, including as the set of nonzero points with globally defined and bounded forward and backward orbits, or as the relative boundary of the global attractor of bounded sets.  The presence of the carrying simplex (which is asymptotically complete) means that the limiting dynamics can be studied on the carrying simplex which gives rise to a system of one fewer degree of freedom. This has been exploited by a number of authors to study the global dynamics of both continuous and discrete-time population models. For example, \cite{JN-JMB2017, Niu2016, Gyll} study global dynamics of maps with a carrying simplex by way of an index theorem. Baigent and Hou \cite{BH2012,BH2017} utilise the carrying simplex in both continuous and discrete time models to construct Lyapunov functions on forward invariant sets, and Montes de Oca and Zeeman \cite{D_Oca} use the carrying simplex concept iteratively to reduce a continuous-time competition model to an easily solved one-dimensional model.

That there exists such an invariant manifold under ecologically reasonable assumptions is intuitive. We impose that the origin is unstable, which means that small population densities grow; all the species cannot simultaneously go extinct. We also require that the per-capita growth rate of a given species decreases with increasing densities of all species (although we relax this to non-increasing for some densities later). This can be thought of as modelling competition for resources. In differential equation models for competition, these assumptions are typically sufficient for a carrying simplex to exist \cite{HirschCS,Hou2020}, but not in the discrete time case. We follow other authors  \cite{JandW,HirschJBD,RH,Hou21} and add further conditions that render the map retrotone. Roughly speaking the retrotone property, which can be imposed through spectral properties of the derivative of the map  \cite{HirschJBD,RH,Hou21}, aside from modelling competitive interactions, puts restrictions on the maximal change in total population density over one generation, i.e. large radial changes under one application of the map are not permitted. The retrotone properties of the map render the carrying simplex unordered, so that it projects radially onto the probability simplex. Hence on the carrying simplex, given the frequency of the species, i.e. a point in the probability simplex, the radial coordinate, which is the total population density, is determined.

We argue that the radial representation, in which phase space is the cartesian product of the probability simplex and the positive real line, is a natural coordinate choice, and it is the main set of coordinates that we use here. In this description the carrying simplex is just the graph of a continuous functions, locally Lipschitz on the interior of the phase space, over the probability simplex.  However, working with the radial representation presents technical difficulties at the boundary of phase space where derivatives can become unbounded. To resolve this issue we use the Kuratowski metric to establish Hausdorff convergence of the unordered manifolds generated by the graph transform approach. We generate one increasing and one decreasing sequence of Kuratowski convergent sequences of unordered manifolds and then we utilise the Harnack metric to show that the two limits are identical and identified as the carrying simplex.

\bigskip

The paper is organised as follows.  In Section \ref{sec:sec-notation} we introduce our notation and give important definitions, such as for cone-orderings, unordered and weakly unordered sets, retrotone maps and weakly retrotone maps, and attractors of various classes of sets.   In~particular, a definition of the carrying simplex is proposed (Definition~\ref{cUintocU}).  Since no standard definition has been so far settled on, we choose to base our proposal on definitions in~\cite{HirschJBD} and~\cite{RH}, perhaps with more dynamical flavour added (property~\ref{P-attraction}).  We mention also some additional properties, \ref{P-order-convex} up to \ref{P-Lipschitz}, that have appeared in some earlier papers.  We will see later that all those properties are satisfied under our assumptions.  Section \ref{sec_existence} is concerned with proof of the existence of the carrying simplex. Subsection \ref{subsect:axes} deals with the dynamics of the map restricted to the boundary, and  is crucial for establishing the existence of a bounded rectangle  on which the map is retrotone/weakly retrotone and which contains the unique compact attractor of bounded sets. Conditions under which the map is retrotone/weakly retrotone  on the bounded rectangle are established in Subsection \ref{subsect:retrotone} and Subsection \ref{subsect:Lambda-attracts} characterises the compact attractor of bounded sets of the map.  In Subsection~\ref{subsect:construction-of-CS} we construct the carrying simplex/weak carrying simplex from sequences of unordered/weakly unordered manifolds using the graph transform and Kuratowski convergence radial coordinates, as well as give some of its properties.  Attractor--repeller pairings are used to establish further properties of the carrying simplex/weak carrying simplex in Subsections~\ref{subsect:back-to-C_+}, and Subsections \ref{subsect:asymptotic} and~\ref{subsect:Lipschitz} deal with asymptotic completeness and a Lipschitz representation of the carrying simplex. In Section \ref{sect:examples} we discuss some examples that illustrate the main ideas of the paper. Finally in Section \ref{subsect:ODEs} we show how to use our results for retrotone [weakly retrotone] maps to derive conditions for the existence of a carrying simplex [weak carrying simplex] for competitive systems of ordinary differential equations.

\section{Notation and definitions}\label{sec:sec-notation}
\noindent We will need the following notation and definitions.

$\lVert \cdot \rVert$ stands for the Euclidean norm in $\mathbb{R}^d$, $\normone{\cdot}$ denotes the corresponding $\ell_1$\nobreakdash-\hspace{0pt}norm in $\RR^d$, and $\norminfty{\cdot}$ denotes the corresponding $\ell_{\infty}$\nobreakdash-\hspace{0pt}norm in $\RR^d$, $d \ge 1$.  For $x,y\in \RR^d$, $x\cdot y$ denotes the standard inner product $\sum\limits_{i=1}^d x_i y_i$ in $\RR^d$.

For $x \in \RR^d$ and $A \subset \RR^d$ we write
\begin{equation*}
    \dist(x, A) := \inf\,\{\, \norm{x - a} : a \in A \,\},
\end{equation*}
and for nonempty compact $A, B \subset \RR^d$ we denote by $d_H(A, B)$ their \emph{Hausdorff distance},
\begin{equation*}
    d_H(A, B) := \max\{ \sup\,\{\, \dist(a, B) : a \in A \,\}, \sup\,\{\, \dist(b, A) : b \in B \,\}\}.
\end{equation*}

$C_{+} := \{x\in \RR^d: x_i\geq 0, \ i=1,\ldots,d\}$, and $C_{++} := \{x\in \RR^d: x_i > 0, \ i=1,\ldots,d\}$ will denote convex cones. $C_+$ is often referred to as the \emph{first orthant} and $C_{++}$ is its interior.  $\partial C_{+} = C_{+} \setminus C_{++}$ is called the \emph{boundary} of $C_{+}$ (indeed, it is the boundary of $C_{+}$ in $\RR^d$).

We denote $\NN =  \{0,1, 2, 3, \dots\}$. For a subset $I \subset \{1, \ldots,d\}$, let $\RR^d_{I} := \{\, x \in \RR^d: x_i = 0$~for~all~$i \in \{1, \ldots,d\} \setminus I\,\}$, let
$(C_I)_{+} := C_{+} \cap \RR^d_{I}$ denote a \emph{$k$\nobreakdash-\hspace{0pt}dimensional face} of $C_{I}$, where $k = \card{I}$, and let $(C_I)_{++} := \{\, x \in (C_I)_{+}: x_i > 0$~for~all~$i \in I\,\}$ denote the \textit{relative interior} of $(C_I)_{+}$.  $\partial (C_I)_{+} := (C_I)_{+} \setminus (C_I)_{++}$ is the \textit{relative boundary} of $(C_I)_{+}$.  A $1$\nobreakdash-\hspace{0pt}dimensional face is referred to as an \textit{$i$\nobreakdash-\hspace{0pt}th axis}, where $I = \{i\}$. Instead of~$(C_{\{i\}})_+$, etc., we write $(C_{i})_+$, etc.  For $x \in C_{+}$ let $I(x)$ stand for the (unique) subset of $\{1, \ldots,d\}$ such that $x \in (C_{I(x)})_{++}$ (often $I(x)$ is called the \emph{support} of $x$).

$\bar{A}$ denotes the closure of $A$.  Let $D$ be a closed subset of $C_+$.  $A \subset D$ is said to be \emph{relatively open} in $D$ if there is an open subset $U$ of $\RR^d$ such that $A = U \cap D$.  $\inte_{D}{A}$ (called the \emph{relative interior} of $A$ in $D$) stands for the largest subset of $A$ that is relatively open in $D$, and $\bd_{D}{A} = \bar{A} \setminus \inte_{D}{A}$.  For $x \in D$, we say $U \subset D$ is \emph{relative neighbourhood} of $x$ in $D$ if there exists a neighbourhood $V$ of $x$ in $\RR^d$ such that $U = V \cap D$.  Neighbourhoods/relative neighbourhoods are tacitly assumed to be open/relatively open.

For $x, y \in C_{+}$, we write $x \le  y$ if $x_i \le  y_i$ for all $i=1,\ldots,d$, and $x \ll  y$ if $x_i < y_i$ for all $i=1,\ldots,d$. If $x \le  y$ but $x \ne  y$ we write $x <  y$. The reverse relations are denoted by $\geq, >,\gg$. Two points $x,y\in C_+$ are said to be \emph{order-related} if either $x\leq y$ or $y\leq x$.

For $x, y \in C_{+}$ such that $x \le y$ we define the \textit{order interval} as
\begin{equation*}
   [x, y] := \{\, z \in C_{+} : x \le z \le y \,\}.
\end{equation*}
We say $B \subset C_{+}$ is \textit{order~convex} if for any $x, y \in B$ with $x \le y$ one has $[x, y] \subset B$.

Let $\emptyset \ne I \subset \{1, \dots, d\}$.  For $x, y \in (C_{I})_+$, we write $x \le_I  y$ if $x_i \le  y_i$ for all $i \in I$, and $x \ll_I  y$ if $x_i < y_i$ for all $i \in I$.  If $x \le_I  y$ but $x \ne  y$ we write $x <_I  y$. The reverse relations are denoted by $\geq_I, >_I,\gg_I$.

$\Delta := \{x \in C_{+}:\sum\limits_{i=1}^d x_i=1\}$ denotes the standard probability $(d - 1)$\nobreakdash-\hspace{0pt}simplex.  For $I \subset \{1, \ldots,d\}$, $\Delta_{I} := \Delta \cap (C_I)_{+}$.

For $I \subset \{1, \dots, d\}$, by $\pi_I$ we understand the orthogonal projection of $C_{+}$ onto $(C_I)_{+}$.

$\Pi$ denotes the orthogonal projection along $\ev$ onto $V := \ev^{\perp}$,  $\Pi u = u - (u \cdot \hat{\ev}) \hat{\ev} = u - (u \cdot \ev) \ev/d$. Here $\ev_i$ is the vector with a one at the $i$\nobreakdash-\hspace{0pt}th position and zeros elsewhere and $\ev =\sum\limits_{i=1}^d\ev_i$, $\hat{\ev}=\ev/\sqrt{d}$.

An important concept needed to describe the carrying simplex is that of unordered sets. We also use a weaker notion of weakly unordered sets (introduced (but not in name) in \cite[Rem.~2.1(f)]{Hou21}):
\begin{definition}
\label{def:unordered}
A set $B \subset C_+$ is said to be \emph{unordered} if no two distinct points of $B$ are ordered by the $<$  relation.

A set $B \subset C_+$ is said to be \emph{weakly unordered} if for any $\emptyset \ne I \subset \{1, \ldots, d\}$ no two distinct points of $B \cap (C_I)_{+}$ are ordered by the $\ll_I$  relation.
\end{definition}

\begin{example}
\label{ex:simplex}
    \normalfont The standard probability simplex $\Delta \subset C_{+}$ is unordered.
\end{example}

\begin{example}
\label{ex:weakly-unordered-not-ordered}
    \normalfont Let $d > 1$.  For $a > 0$ consider the set
    \begin{equation*}
    \begin{aligned}
        H(a) & := \bd_{C_{+}}[0, a \ev]
        \\
        & = \{\, x \in C_{+} : \forall i \in \{1, \ldots, d\}\, 0 \le x_i \le a, \ \exists k \in \{1, \ldots, d\} \text{ such that } x_k = a \,\}.
    \end{aligned}
    \end{equation*}
    The set $H(a)$ is weakly unordered.  To show this, suppose to the contrary that there are a nonempty $I \subset \{1, \dots, d\}$ and $x, y \in H(a) \cap (C_I)_{+}$ with $x \ll_I y$, which means that $x_i < y_i$ for all $i \in I$ and $x_j = y_j = 0$ for all $j \in \{1, \dots, d\} \setminus I$.  But then $x_i < a$ for all $1 \le i \le d$, so $x$ cannot belong to $H(a)$. On the other hand, $H(a)$ is not unordered:  for example, $a \ev_1 < a \ev$ and both belong to $H(a)$.
\end{example}

Another important concept in the theory of carrying simplices is that of a retrotone map. Retrotonicity is the property that ensures that ordered points are ordered along backward orbits.
\begin{definition}
\label{def:competitive}
A map $F \colon C_{+} \to C_{+}$ is {\em retrotone} in a subset $B \subset C_{+}$, if, for all $x, y \in B$ with $ F(x) < F(y)$, one has that $x_i < y_i$ provided $y_i > 0$.

A map $F \colon C_{+} \to C_{+}$ is \emph{weakly retrotone} in $B \subset C_{+}$ provided for all $I \subset \{1, \ldots, d\}$ and any $x, y \in B$, if
  \begin{equation*}
    F(x) < F(y) \quad \text{and} \quad F_i(x) < F_i(y) \text{ for all } i \in I
  \end{equation*}
  then
  \begin{equation*}
    x < y \quad \text{and} \quad x_i < y_i \text{ for all } i \in I.
  \end{equation*}
\end{definition}
The name `retrotone' in the context of competitive maps appeared first in~\cite{HirschJBD} and later in~\cite{RH}, \cite{Hou21} (however, notice that in~\cite{HirschJBD} our `retrotone' is called `strictly retrotone').   The term `weakly retrotone' was introduced in~\cite{Hou21}.

\bigskip
From now on, we assume that $F =(F_1, \ldots, F_d) \colon C_{+} \to C_{+}$ is a continuous map.  We will consider the dynamical system $(F^n)_{n=0}^{\infty}$ on $C_{+}$, where $F^0 = \Id_{C_{+}}$ and for convenience we will write $F^n = (F_1^n, \ldots, F_d^n)$.

For $x \in C_{+}$ we denote its \textit{forward orbit}, $O^{+}(x)$, as
\begin{equation*}
    O^{+}(x) := \{\, F^n(x) : n \in \NN \, \}.
\end{equation*}

$A \subset C_{+}$ is \textit{forward invariant} if $F(A) \subset A$, and \textit{invariant} if $F(A) = A$.

By a \textit{backward orbit} of $x \in C_{+}$ we understand a set $\{\dots, x_{-n-1}, x_{-n}, \dots, \break x_{-2}, x_{-1}, x_{0}\}$ such that $x_0 = x$ and $x_{-n} = F(x_{-n-1})$ for all $n \in \NN$ (so as to allow for noninvertible maps). A \textit{total orbit} of $x \in C_{+}$ is the union of a backward orbit of $x$ and the forward orbit, $O^{+}(x)$.

\medskip

Following the terminology as in~\cite{S-T} we say that $B \subset C_{+}$ \textit{attracts} the set $A \subset C_{+}$ if for each $\epsilon > 0$ there is $n_0 \in \NN$ such that $\dist(F^n(x), B) < \epsilon$ for all $n \ge n_0$ and all $x \in A$.

For a set $A \subset C_{+}$ define its \emph{$\omega$\nobreakdash-\hspace{0pt}limit set} as
\begin{equation*}
  \omega(A) := \bigcap\limits_{k=0}^{\infty} \overline{\left(  \bigcup\limits_{l=k}^{\infty} F^l(A) \right)}.
\end{equation*}

The dynamical system $(F^n)_{n=0}^{\infty}$ is said to be \textit{asymptotically compact on $A \subset C_{+}$} if for any sequence $(n_k)_{k = 0}^{\infty}$, $n_k \to \infty$, and any sequence $(x_k)_{k=0}^{\infty} \subset A$, the sequence $(F^{n_k}(x_k))_{k = 0}^{\infty}$ has a convergent subsequence.
\begin{lemma}{\normalfont{\cite[Prop.~2.10]{S-T}}}
\label{lm:attracting-equivalent}
    Assume that $(F^n)_{n=0}^{\infty}$ is asymptotically compact on a nonempty $A$.  Then a compact $B$ attracts $A$ if and only if $\omega(A) \subset B$.
\end{lemma}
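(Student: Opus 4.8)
The plan is to prove the two implications separately, observing in advance that only the implication $\omega(A)\subset B\Rightarrow B$ attracts $A$ uses asymptotic compactness; the other direction is a soft estimate with the triangle inequality for the $1$-Lipschitz function $\dist(\cdot,B)\colon\RR^d\to[0,\infty)$, together with the fact that $\dist(z,B)=0$ exactly when $z\in\overline{B}$ (which in the intended applications, where $B$ is closed, means $z\in B$). For the forward implication, that $B$ attracting $A$ yields $\omega(A)\subset B$: given $y\in\omega(A)$, for each $k\in\NN$ one has $y\in\overline{\bigcup_{l\ge k}F^l(A)}$, so I can pick $l_k\ge k$ and $x_k\in A$ with $\norm{F^{l_k}(x_k)-y}<1/k$. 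Since $l_k\to\infty$, the definition of ``attracts'' gives, for every $\epsilon>0$ and all large $k$, $\dist(F^{l_k}(x_k),B)<\epsilon$, whence $\dist(y,B)\le 1/k+\epsilon$; letting $k\to\infty$ and then $\epsilon\to0$ yields $\dist(y,B)=0$, i.e.\ $y\in\overline{B}$.

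For the converse I would argue by contradiction. Assuming $\omega(A)\subset B$ but that $B$ fails to attract $A$, there exist $\epsilon>0$, a sequence $n_k\to\infty$, and points $x_k\in A$ with $\dist(F^{n_k}(x_k),B)\ge\epsilon$ for all $k$. Asymptotic compactness on $A$ lets me pass to a subsequence along which $F^{n_k}(x_k)\to y$ for some $y\in\RR^d$. The crux is to verify $y\in\omega(A)$: for each fixed $m\in\NN$ we have $n_k\ge m$ for all large $k$, so the tail of the sequence lies in $\bigcup_{l\ge m}F^l(A)$, hence $y\in\overline{\bigcup_{l\ge m}F^l(A)}$; as $m$ is arbitrary, $y\in\bigcap_{m}\overline{\bigcup_{l\ge m}F^l(A)}=\omega(A)\subset B$, so $\dist(y,B)=0$. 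But continuity of $\dist(\cdot,B)$ forces $\dist(y,B)=\lim_k\dist(F^{n_k}(x_k),B)\ge\epsilon>0$, a contradiction.

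The only step requiring any care is showing that the subsequential limit produced by asymptotic compactness genuinely lies in $\omega(A)$; this is precisely where the definition of $\omega(A)$ as the nested intersection $\bigcap_{k}\overline{\bigcup_{l\ge k}F^l(A)}$ of forward tails is used, and it is also the reason asymptotic compactness is indispensable for the backward implication (without it, one cannot extract a limit point to place inside $\omega(A)$). Everything else is routine unwinding of the definitions of $\dist$ and of ``attracts''.
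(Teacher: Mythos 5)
Your proof is correct. Note that the paper does not argue this lemma itself: its ``proof'' is a citation to \cite[Lemmas~2.6 and~2.8]{S-T}, and your two-part argument (triangle inequality with the $1$-Lipschitz function $\dist(\cdot,B)$ for the forward direction; contradiction plus extraction of a subsequential limit via asymptotic compactness, then placing that limit in $\omega(A)=\bigcap_{m}\overline{\bigcup_{l\ge m}F^l(A)}$, for the converse) is essentially the standard argument behind those cited lemmas, so you have in effect supplied the self-contained proof the paper outsources. You also correctly isolate where asymptotic compactness is genuinely needed (only for $\omega(A)\subset B \Rightarrow B$ attracts $A$). The one caveat, which you flag appropriately, is that the forward implication by itself only yields $\omega(A)\subset\overline{B}$; the statement as written implicitly takes $B$ closed, which is harmless here since in every application in the paper $B$ is compact (e.g.\ $\Gamma$ or $[0,1]$), so no gap results.
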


Let $B \subset C_{+}$ be forward invariant.  By the \textit{compact attractor of bounded sets in $B$} we mean a nonempty compact invariant set $\Gamma \subset B$ that attracts any bounded $A \subset B$.  Such a set is unique (see~\cite[Thm.~2.19 on p.~37]{S-T}).  By the \textit{compact attractor of neighbourhoods of compact sets in $B$} we mean a nonempty compact invariant set $\Gamma \subset B$ such that for any compact $A \subset B$ there is a relative neighbourhood $U$ of $A$ in $B$ such that $\Gamma$ attracts $U$.  Such a set $\Gamma$ is unique (see~\cite[Thm.~2.19 on p.~37]{S-T}).

For $B = C_{+}$ we say simply \textit{compact attractor of bounded sets}, which is the same as the compact attractor of neighbourhoods of compact sets.

\begin{proposition}{\normalfont{\cite[Thm.~2.20 on p.~37]{S-T}}}
\label{prop:attractor-characterization}
    Let $B \subset C_{+}$ be forward invariant.  The compact attractor of bounded sets is characterised as the set of all $x \in B$ having bounded total orbits.
\end{proposition}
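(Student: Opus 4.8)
The plan is to prove that $\Gamma$, the compact attractor of bounded sets in $B$, coincides with the set $M$ of all $x \in B$ admitting a bounded full orbit (contained in $B$), by establishing the two inclusions $\Gamma \subseteq M$ and $M \subseteq \Gamma$. Both are direct consequences of the defining properties of $\Gamma$ — nonempty, compact, invariant, and attracting every bounded subset of $B$ — so no appeal to asymptotic compactness is needed; the whole argument is definition-chasing. (Existence of $\Gamma$ is presupposed, as in the discussion preceding the statement.)

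For $\Gamma \subseteq M$, fix $x \in \Gamma$. Since $\Gamma$ is \emph{invariant}, $F(\Gamma) = \Gamma$, so there is $x_{-1} \in \Gamma$ with $F(x_{-1}) = x$; iterating, we obtain points $x_{-n} \in \Gamma$ ($n \in \NN$) with $F(x_{-n-1}) = x_{-n}$, that is, a backward orbit of $x$ lying entirely in $\Gamma$. Because $\Gamma$ is forward invariant, the forward orbit $O^{+}(x)$ also lies in $\Gamma$, so $x$ admits a full orbit contained in $\Gamma \subseteq B$; as $\Gamma$ is compact, this full orbit is bounded, hence $x \in M$.

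For $M \subseteq \Gamma$, let $x \in M$ and let $(x_n)_{n \in \ZZ} \subseteq B$ be a bounded full orbit with $x_0 = x$ and $F(x_n) = x_{n+1}$ for all $n \in \ZZ$. Put $A := \{\, x_{-n} : n \in \NN \,\}$, a bounded subset of $B$. Since $\Gamma$ attracts $A$, for every $\epsilon > 0$ there is $n_0 \in \NN$ with $\dist(F^n(y), \Gamma) < \epsilon$ for all $n \ge n_0$ and all $y \in A$; taking $y = x_{-n}$ with $n \ge n_0$ and using $F^n(x_{-n}) = x_0$, we get $\dist(x_0, \Gamma) < \epsilon$. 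Letting $\epsilon \downarrow 0$ and using that $\Gamma$ is closed yields $x_0 = x \in \Gamma$.

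There is no genuinely hard step here; the two points that need care are (i) invoking full invariance ($F(\Gamma) = \Gamma$) rather than mere forward invariance when producing a backward orbit through a point of $\Gamma$ — this is the step I would flag as the main potential pitfall — and (ii) applying the attraction property with the \emph{tail of the backward orbit} in the role of the set of initial conditions, so that one forward iterate of a sufficiently far-back point already lands arbitrarily close to $\Gamma$. Implicit in the statement is that "full orbit" means a full orbit of $F$ that stays in $B$ (automatic for the forward part, since $B$ is forward invariant), which is what makes the set $A$ above a legitimate bounded subset of $B$.
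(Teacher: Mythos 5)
Your argument is correct. Note, though, that the paper does not prove this proposition at all: it simply cites Smith--Thieme \cite[Theorem~2.20 on p.~37]{S-T}, so your contribution is a self-contained, purely definition-chasing replacement for that citation. Both inclusions you give are sound: $\Gamma \subseteq M$ uses full invariance $F(\Gamma)=\Gamma$ to thread a backward orbit through $\Gamma$ (together with compactness for boundedness), and $M \subseteq \Gamma$ applies the attraction property to the bounded set $A=\{x_{-n}: n\in\NN\}$ formed by the backward tail, using $F^n(x_{-n})=x_0$ and closedness of $\Gamma$. What the citation buys the authors is the general framework of \cite{S-T} (where existence of the attractor is also part of the package and the statement is given for semiflows on a metric state space); what your proof buys is transparency and minimal hypotheses — you need only that the compact attractor of bounded sets exists, is invariant, compact, and attracts bounded subsets of $B$. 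Your flagged caveat is the right one and matches the reference: the full orbit must be understood as lying in $B$ (the state space for the restricted system), since attraction is only assumed for bounded subsets of $B$; in the paper's actual application ($B=C_{+}$ in Theorem~\ref{thm:global-attractor-exists}(ii), and Lemma~\ref{lm:attractor-characterization}) this distinction is harmless.
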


A compact set $B \subset C_{+}$ is called a \emph{uniform repeller} in $C_{+}$ if there is $\epsilon > 0$ such that $\liminf\limits_{n \to \infty} \dist(F^n(x), B) \ge \epsilon$ for any $x \in C_{+} \setminus B$.  It is equivalent to the existence of a neighbourhood $U$ of $B$ in $C_{+}$ such that for each $x \in U \setminus B$ there is $n_0 \in \NN$ with the property that $F^n(x) \notin U$ for all $n \ge n_0$ (see~\cite[Rem.~5.15 on p.~136]{S-T}).

\medskip

We say that $F \colon C_{+} \to C_{+}$ is a \emph{Kolmogorov map} if $F = \diag[\mbox{id}]f$ where $f \colon C_+ \to C_+$.

\begin{definition}
\label{def_CS}
The \emph{carrying simplex} \textup{[}resp.\ \emph{weak carrying simplex}\textup{]} for a Kolmogorov map $F \colon C_{+} \to C_{+}$ is a subset $\Sigma \subset C_{+}$ with the following properties:
\begin{enumerate}[label=\textup{(\roman*)},ref=\textup{(\roman*)}]
\item
\label{P-unordered}
    $\Sigma$ is an unordered \textup{[}resp.\ weakly unordered\textup{]} subset of the \textup{(}unique\textup{)} compact attractor of bounded sets $\Gamma$.
\item
\label{P-radial-proj}
    $\Sigma$ is homeomorphic via radial projection to the $(d-1)$\nobreakdash-\hspace{0pt}dimensional standard probability simplex $\Delta$.
\item
\label{P-invariant}
    $F(\Sigma) = \Sigma$ and $F{\restriction}_{\Sigma} \colon \Sigma \to \Sigma$ is a homeomorphism.
\item
\label{P-attraction}
    $\Sigma$ attracts any bounded $A \subset C_{+}$ with $0 \notin \bar{A}$.
\item
\label{P-asymptotic}
    For any $x \in C_{+} \setminus \{0\}$ there is $y \in \Sigma$ such that $\displaystyle \lim\limits_{n\to +\infty} \norm{F^n(x) - F^n(y)} = 0$ \textup{(}this property is called \emph{asymptotic completeness} or \emph{asymptotic phase}\textup{)} \textup{[}resp.\ for any $x \in \Gamma \setminus \{0\}$ there is $y \in \Sigma$ such that $\displaystyle \lim\limits_{n\to +\infty} \norm{F^n(x) - F^n(y)} = 0$\textup{]}.
\end{enumerate}
\end{definition}
For the carrying simplex, the unorderedness in \ref{P-unordered} appears in \cite{HirschCS} (but not explicitly in \cite{Zee02} or \cite{Zee-Zee02}) in the case of competitive systems of ODEs, and in \cite{SmithJDE}, \cite{JandW}, \cite{RH} in the discrete time case.  For the weak carrying simplex, the weak unorderedness in \ref{P-unordered} appears in \cite{Hou2020} in the case of competitive systems of ODEs, and in \cite{Hou21} in the discrete time case.  The fact that $\Sigma$ is contained in the compact attractor of bounded sets $\Gamma$ is seldom explicitly mentioned (as in \cite{SmithJDE}), but it follows from dissipativity assumed in other papers.

Property \ref{P-radial-proj} is usually mentioned explicitly (but in \cite{RH} the homeomorphism is defined in another way).

Invariance in \ref{P-invariant} is always mentioned.

To our knowledge, the only place where property \ref{P-attraction} has been \emph{explicitly} stated is \cite[p.~291]{Hou21}.  Indeed, in many papers it can be inferred from the property that the carrying simplex is obtained therein as the upper boundary of the repulsion basin of $\{0\}$, see, e.g., \cite{SmithJDE, JandW}.

Property \ref{P-asymptotic} is present everywhere, starting from~\cite[Lem.~4.4]{HirschCS}.

\smallskip
Below we mention some additional properties.
\begin{enumerate}[label=\textup{(\roman*)},ref=\textup{(\roman*)}]
\setcounter{enumi}{5}
    \item
    \label{P-order-convex}
    {\itshape $\Sigma$ is the boundary \textup{(}relative to $C_{+}$\textup{)} of $\Gamma$ and $\Gamma$ is order convex.  In~particular, $\Gamma = \{\alpha x :  \alpha \in [0, 1], \ x \in \Sigma\}$.}
    \item
    \label{P-attractor}
    {\itshape $\Gamma \setminus \Sigma = \{\alpha x :  \alpha\in [0, 1), x \in \Sigma\}$ is characterised as the set of all those $x \in C_{+}$ that have a backward orbit $\{\ldots, x_{-2}, x_{-1}, x\}$ with $\lim\limits_{n \to \infty} x_{-n} = 0$.}
    \item
    \label{P-characterization}
    {\itshape $\Sigma$ is characterised as the set of all $x \in C_{+}$ having total orbits that are bounded and bounded away from $0$.}
    \item
    \label{P-Lipschitz}
    {\itshape The inverse $(\Pi{\restriction}_{\Sigma})^{-1}$ of the orthogonal projection of $\Sigma$ along $\ev$ is Lipschitz continuous.}
\end{enumerate}
As stated earlier, in the existing papers \ref{P-order-convex} is one of the main ingredients in the proof of the existence of the carrying simplex (cf., for example, \cite[Thm.~6.1]{RH}).

The characterisations given in \ref{P-attractor} and \ref{P-characterization} appear in~\cite{RH}.  In the present paper they follow from abstract dynamical systems theory.

\ref{P-Lipschitz} occurred first in \cite{HirschCS}.  Since then it has seldom appeared.

It has been frequently mentioned that the carrying simplex is unique.  It follows from the conjunction of \ref{P-unordered} and \ref{P-radial-proj}, or from \ref{P-attraction} (using forward invariance of all faces), however in our approach it is simpler to use the additional property \ref{P-characterization}.

\section{Existence of a carrying simplex} \label{sec_existence}

We give two sets of assumptions which guarantee the existence of the carrying simplex (Theorem~\ref{prop1_steve}).

In the case of the first set, \ref{AS-C^0} up to \ref{AS-3}, we start by assuming the existence of some bounded rectangle $\Lambda$ such that $F$ restricted to $\Lambda$, $F{\restriction}_\Lambda$, is [weakly] retrotone (assumption \ref{AS4}).  We work this way because not all maps with a carrying simplex are retrotone on all of $C_+$ (an example is given in Subsection~\ref{subsect:2DRicker}).  We prove then that $F{\restriction}_\Lambda$, satisfies Definition \ref{def_CS} with instances of $C_+$ replaced by $\Lambda$.  As [weak] retrotonicity is often difficult to prove directly, in Subsection~\ref{subsect:retrotone} we give sufficient conditions, formulated in terms of the spectral radius of some matrix, for weak retrotonicity or retrotonicity to be satisfied.  Those conditions are fulfilled for many discrete time competition models, as explained in Section~\ref{sect:examples}.

The second set of assumptions, with \ref{AS-3} replaced by \ref{AS3-mod}, covers the case when $F$ is the time-one map of a competitive system of ODEs, and we utilise it in Subsection \ref{subsect:ODEs} to recover well-known conditions for the existence of a carrying simplex in a system of competitive ODEs.  Then retrotonicity on the whole of $C_{+}$ is a consequence of the Müller--Kamke theory \cite{muller,kamke}.  On the other hand, \ref{AS-3} may be  difficult to check, so it is replaced by \ref{AS3-mod}.  Now, the role of $\Lambda$ can be played by any sufficiently large rectangle.

The main part of the present section, Subsection~\ref{subsect:construction-of-CS}, contains a proof of the existence of a set $\Sigma$ satisfying \ref{P-unordered}, \ref{P-radial-proj} and \ref{P-invariant} in Definition~\ref{def_CS}.   Also, the additional property~\ref{P-order-convex} is proved there.

In the second step (subsection~\ref{subsect:back-to-C_+}) we show that all points in $C_+$ eventually enter and stay in $\Lambda$, so that, with the help of the dynamical systems theory, $\Sigma$ actually attracts any bounded $A \subset C_{+}$ with $0 \notin \bar{A}$ (property \ref{P-attraction}).  The map is not required to be retrotone outside $\Lambda$.  As a by-product we obtain the satisfaction of the additional properties \ref{P-attractor} and \ref{P-characterization}.

Subsection~\ref{subsect:asymptotic} contains a proof of property~\ref{P-asymptotic} (so, only at that point can $\Sigma$ be legitimately called the carrying simplex).  A proof of the additional property~\ref{P-Lipschitz} is given in~Subsection~\ref{subsect:Lipschitz}.

 \medskip
 We make the following assumptions:

 Let $F \colon C_{+} \to C_{+}$ be a Kolmogorov map $F := \diag [\mbox{id}] f$ where $f \colon C_+ \to C_+$ satisfies
\begin{enumerate}[label=\textup{A\arabic*},ref=\textup{A\arabic*},align=left]
\item \label{AS-C^0}
\quad $f$ is continuous, with $f(x) \gg 0$ for all $x \in C_{+}$;
\item \label{AS-e}
\quad  $f_i(\ev_i)= 1$,  $i=1,\ldots,d$;
\item \label{AS4}
\quad
there exists $\varkappa > 0$ such that, putting  $\Lambda := [0, (1+\varkappa)\ev]$,
\begin{enumerate}[label=\textup{A\arabic{enumi}-\alph{enumii}},ref=\textup{A\arabic{enumi}-\alph{enumii}}]
    \item \label{AS-homeo}
    \quad $F{\restriction}_{\Lambda} \colon \Lambda \to F(\Lambda)$ is a local homeomorphism,
    \item \label{AS-weak-retro}
    \quad $F$ is weakly retrotone in $\Lambda$,
\end{enumerate}
\item \label{AS-3}
\quad  for any $x, y \in C_{+}$, if $x < y$ then
\begin{enumerate}[label=\textup{A\arabic{enumi}-\alph{enumii}},ref=\textup{A\arabic{enumi}-\alph{enumii}}]
    \item \label{AS-3-1}
    \quad $f_i(x) \ge f_i(y)$ for all  $i$, and
    \item \label{AS-3-competitive}
    \quad $f_i(x) > f_i(y)$ for those $i$ for which $x_i < y_i$;
\end{enumerate}

\end{enumerate}
In \ref{AS-homeo} by a \textit{local homeomorphism} we mean that for each $x \in \Lambda$ there exist a relative, in $\Lambda$,  neighbourhood $U$ of $x$ and a relative in $F(\Lambda)$ neighbourhood $V$ of $F(x)$ such that $F{\restriction}_U \colon U \to V$ is a homeomorphism.

\begin{remark}
\label{rm:invariance}
    From \ref{AS-C^0} it follows that for any $\emptyset \ne I \subset \{1, \ldots, d\}$ there holds $F((C_I)_{++}) \subset (C_I)_{++}$ and $F^{-1}((C_I)_{++}) \subset (C_I)_{++}$.
\end{remark}
\begin{remark}
In \ref{AS-e}, we assume that each axis has a fixed point of $F$ at $\ev_i$, but by rescaling we may deal with fixed point at $q_i \ev_i$ for any set of $q_i>0$, $i=1,\ldots,d$.
\end{remark}
Put $\Lambda' := \Lambda \setminus \{0\}$.

\begin{lemma}
\label{lm:AS-homeo}
    Assume~\ref{AS-C^0} and~\ref{AS-homeo}.  Then $F{\restriction}_{\Lambda}$ is a homeomorphism onto its image.
\end{lemma}
\begin{proof}
  By \ref{AS-homeo}, the map $F{\restriction}_{\Lambda}$ is a local homeomorphism.  Since $\Lambda$ is compact, $F{\restriction}_{\Lambda}$ is a proper map.  Further, $\Lambda$ being connected, its image $F(\Lambda)$ is connected, and, as it does not contain critical points (i.e., points where $F$ is not locally invertible), we can apply \cite[Lem.~2.3.4]{ChHa} to conclude that the cardinality of $(F{\restriction}_{\Lambda})^{-1}(y)$ is constant for all $y \in F(\Lambda)$.  As it follows from the Kolmogorov property and~\ref{AS-C^0} that $\card{F^{-1}(0)} = 1$, $F{\restriction}_{\Lambda}$ is injective, so, being continuous from a compact space, is a homeomorphism onto its image.
\end{proof}
\noindent (For similar reasoning see~\cite[Lem.~4.1]{RH}).

Sometimes instead of~\ref{AS4}--\ref{AS-3} we make the following stronger assumptions:
\begin{enumerate}[label=\textup{A\arabic*$'$},ref=\textup{A\arabic*$'$}]
\setcounter{enumi}{2}
    \item \label{AS4-strong}
    \quad
    there exists $\varkappa > 0$ such that, putting $\Lambda := [0,(1+\varkappa)\ev]$,
        \begin{enumerate}[label=\textup{A\arabic{enumi}$'$-\alph{enumii}},ref=\textup{A\arabic{enumi}$'$-\alph{enumii}}]
            \item \label{AS-homeo-strong}
            \quad $F{\restriction}_{\Lambda} \colon \Lambda \to F(\Lambda)$ is a local homeomorphism, and
            \item \label{AS-retro}
            \quad $F$ is retrotone in $\Lambda$;
        \end{enumerate}
    \item \label{AS-3-strong}
    \quad for any $x, y \in C_{+}$, if $x < y$ then $f_i(x) > f_i(y)$ for all  $i$.
\end{enumerate}

Under \ref{AS4} we will occasionally need a modified form of~\ref{AS-3}, namely
\begin{enumerate}[label=\textup{\textoverline{A\arabic*}},ref=\textup{\textoverline{A\arabic*}}]
\setcounter{enumi}{3}
    \item \label{AS3-mod}
    \quad for any $x, y \in \Lambda$, if $F(x) < F(y)$ then
    \begin{enumerate}[label=\textup{\textoverline{A\arabic{enumi}}-\alph{enumii}},ref=\textup{\textoverline{A\arabic{enumi}}-\alph{enumii}}]
        \item \label{AS3-a-mod}
        \quad $f_i(x) \ge f_i(y)$ for all  $i$, and
        \item \label{AS3-b-mod}
        \quad for those $i$ for which $F_i(x) < F_i(y)$ there holds either $x_i = 0$ or $f_i(x) > f_i(y)$.
     \end{enumerate}
\end{enumerate}
Similarly, under \ref{AS4-strong} we will occasionally need a modified form of~\ref{AS-3-strong}, namely
\begin{enumerate}[label=\textup{\textoverline{A\arabic*}$'$},ref=\textoverline{A\arabic*}$'$]
\setcounter{enumi}{3}
    \item \label{AS3-strong-mod}
    \quad for any $x, y \in \Lambda$, if $F(x) < F(y)$ then $f_i(x) > f_i(y)$ for all  $i$, provided $x_i > 0$.
\end{enumerate}

As will be seen later, the assumptions \ref{AS-3} and \ref{AS4} are not, in~general, independent of each other. Our motivation is that we wish to strike a balance between assumptions that are reasonably general and, on the other hand, easy to check.

In~particular, since \ref{AS-3} and~\ref{AS-weak-retro} imply \ref{AS3-mod}, one may well ask why we have not chosen to assume the latter only.  The reason is that in the case when $F$ is given by a closed-form formula and is not~necessarily injective on the whole of $C_{+}$ (as, for~instance, in the planar Ricker model, see subsection~\ref{subsect:2DRicker}), the checking of whether \ref{AS3-mod} is satisfied could be a difficult task, whereas \ref{AS-3} is a simple consequence of the negativity of the relevant derivatives.  On the other hand, when $F$ is the time\nobreakdash-\hspace{0pt}one map in the semiflow generated by a competitive system of ODEs, both \ref{AS3-mod} and~\ref{AS-weak-retro} are fairly direct consequences of the Müller--Kamke theorem (see subsection~\ref{subsect:ODEs}), whereas we see no reason why \ref{AS-3} should be satisfied.

\medskip

The remainder of this section is devoted to the proof of the following existence theorem for the weak carrying simplex or carrying simplex:

\begin{theorem}
\label{prop1_steve}
    Under the assumptions \ref{AS-C^0}--\ref{AS4}, and \ref{AS-3} or \ref{AS3-mod} where $\varkappa$ can be arbitrary \textup{(}so that $\Lambda$ could be all of $C_+$\textup{)}, there exists a weak carrying simplex $\Sigma$.  If we assume additionally \ref{AS4-strong} or \ref{AS-3-strong}, $\Sigma$ is a carrying simplex.
    Moreover, $\Sigma$ satisfies the additional properties \ref{P-order-convex}--\ref{P-Lipschitz}.
\end{theorem}

\subsection{Restriction of the dynamical system $(F^n)_{n = 0}^{\infty}$ to the axes}
\label{subsect:axes}
For $F$ satisfying \ref{AS-C^0} we define one-dimensional  maps $G_i \colon [0, \infty) \to [0, \infty)$, $i \in \{1, \ldots,d\}$, through $G_i(s) := s g_i(s)$, where $g_i(s) := f_i(s \ev_i)$.  The map $G_i$ is  the dynamical rule $F$ restricted to the forward invariant $i$\nobreakdash-\hspace{0pt}th axis, and
$G^n_i$, for $n \in \NN$, denotes the $n$\nobreakdash-\hspace{0pt}th iterate of $G_i$.

In the remainder of the present subsection, the terms from the theory of dynamical systems, such as attractor, $\omega$\nobreakdash-\hspace{0pt}limit set, etc. will refer to each one-dimensional dynamical system $(G_i^n)_{n = 0}^{\infty}$, $i=1,\ldots,d$.
The following results are straightforward  to prove, cf., e.g., \cite[Lem.~6.6]{RH}.
\begin{lemma}
\label{lm:axes}
    Under \ref{AS-C^0}--\ref{AS-3}, for $i \in \{1, \ldots,d\}$ the following holds.
    \begin{enumerate}
        \item[\textup{(a)}]
        $1$ is the unique fixed point of $G_i$ on $(0, \infty)$.
        \item[\textup{(b)}]
         \begin{equation*}
        G_i(s)
        \begin{cases}
        \in (s, 1) & \text{for } s \in (0, 1),
        \\
        = s & \text{for } s = 1
        \\
        \in (0, s) & \text{for } s \in (1, \infty).
        \end{cases}
        \end{equation*}
        \item[\textup{(c)}]
        For any $s \in (0, \infty)$ the sequence $(G_i^n(s))$ converges, as $n \to \infty$, to $1$ in an eventually monotone way.
        \item[\textup{(d)}]
        For $s \in (1, 1 + \varkappa]$ there holds $G_i(s) \in (1, s)$, hence the sequence $(G_i^n(s))$ strictly decreases to $1$.
        \end{enumerate}

        \medskip
        Under \ref{AS-C^0}--\ref{AS4} and \ref{AS3-mod}, for $i \in \{1, \ldots,d\}$ the following holds.
        \begin{enumerate}
        \item[\textup{(a)$'$}]
        $1$ is the unique fixed point of $G_i$ on $(0, 1 + \varkappa]$.
        \item[\textup{(b)$'$}]
         \begin{equation*}
        G_i(s)
        \begin{cases}
        \in (s, 1) & \text{for } s \in (0, 1),
        \\
        = s & \text{for } s = 1
        \\
        \in (1, s) & \text{for } s \in (1, 1 + \varkappa].
        \end{cases}
        \end{equation*}
        \item[\textup{(c)$'$}]
        For any $s \in (0, 1)$ the sequence $(G_i^n(s))$ strictly increases, as $n \to \infty$, to $1$, and for any $s \in (1, 1 + \varkappa]$ the sequence $(G_i^n(s))$ strictly decreases, as $n \to \infty$, to $1$
        \end{enumerate}
\end{lemma}

\begin{lemma}
\label{lm:G_i-image}
    Let \ref{AS-C^0}--\ref{AS4} hold. Assume moreover \ref{AS-3} or \ref{AS3-mod}.  Let $a \in [0, 1 + \varkappa]$.  Then for each $i \in \{1, \ldots,d\}$ and each $n \in \NN$, $G_i^n$ is an increasing homeomorphism of $[0, a]$ onto $[0, G_i^n(a)]$.
\end{lemma}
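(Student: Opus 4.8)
The plan is to reduce everything to one observation, already essentially extracted in the proof of Lemma~\ref{lm:axes}, namely that $G_i$ is a strictly increasing continuous self-map of the interval $[0, 1 + \varkappa]$, and then to iterate and invoke the elementary fact that a strictly increasing continuous function on a compact interval is a homeomorphism onto its image.

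First I would record two facts about $G_i$ on $[0, 1 + \varkappa]$. It is continuous there, and $G_i(0) = 0$ (immediate from $G_i(s) = s\,g_i(s)$). Moreover, by the computation $G'_i(s) = s\,g'_i(s) + g_i(s)$, which is positive for all $s \in (0, 1 + \varkappa]$ precisely by the choice of $\varkappa$ (this is exactly the ingredient used for Lemma~\ref{lm:axes}(iv)), together with continuity at $0$, the map $G_i$ is strictly increasing on $[0, 1 + \varkappa]$. Next, the interval $[0, 1 + \varkappa]$ is forward invariant under $G_i$: for $s \in [0, 1]$ one has $G_i(s) \in [0, 1]$, since $G_i(0) = 0$, $G_i$ is increasing on $[0, 1]$, and $G_i(1) = 1$ by Lemma~\ref{lm:axes}(ii); while for $s \in (1, 1 + \varkappa]$, Lemma~\ref{lm:axes}(iv) gives $G_i(s) \in (1, s) \subset (1, 1 + \varkappa]$. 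Hence $G_i\bigl([0, 1 + \varkappa]\bigr) \subseteq [0, 1 + \varkappa]$.

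Now I would argue by induction on $n$ that $G_i^n$ is a strictly increasing continuous self-map of $[0, 1 + \varkappa]$ with $G_i^n(0) = 0$. The case $n = 0$ is trivial, and the inductive step follows because $G_i^n = G_i \circ G_i^{n-1}$ is a composition of strictly increasing continuous self-maps of $[0, 1 + \varkappa]$, hence again strictly increasing, continuous, valued in $[0, 1 + \varkappa]$, and with $G_i^n(0) = G_i(0) = 0$. Finally, fix $a \in [0, 1 + \varkappa]$ and restrict $G_i^n$ to the compact interval $[0, a]$: a strictly increasing continuous function on $[0, a]$ is a homeomorphism onto the interval bounded by its endpoint values, i.e.\ onto $[G_i^n(0), G_i^n(a)] = [0, G_i^n(a)]$, which is the assertion of the lemma.

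The argument is almost entirely bookkeeping, and I do not anticipate a genuine obstacle; the one step that deserves to be written out with care is the forward invariance of $[0, 1 + \varkappa]$ under $G_i$, since this is where the defining property of $\varkappa$ and parts (ii) and (iv) of Lemma~\ref{lm:axes} are used, and it is what makes the iteration $G_i^n$ legitimate on this interval in the first place.
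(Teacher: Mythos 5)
Your proof is correct and follows essentially the same route as the paper's: strict monotonicity of $G_i$ on $[0,1+\varkappa]$ from $G_i'(s)=s\,g_i'(s)+g_i(s)>0$ (the defining property of $\varkappa$), forward invariance of $[0,1+\varkappa]$ (the paper gets this from $G_i(1+\varkappa)<1+\varkappa$, you from Lemma~\ref{lm:axes}(ii),(iv)), and then iteration. You merely spell out the induction and the elementary homeomorphism step that the paper leaves implicit.
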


\begin{proposition}
\label{prop:G_i}
    Let \ref{AS-C^0}--\ref{AS4} hold. Assume moreover \ref{AS-3} or \ref{AS3-mod} where $\varkappa$ can be arbitrary.  Then for each $i \in \{1, \ldots,d\}$, the invariant set $[0, 1]$ is the compact attractor of bounded sets in $[0, \infty)$.
\end{proposition}

\subsection{Existence of the compact attractor $\Gamma$ of bounded sets}
\label{subsect:Lambda-attracts}
Throughout the present subsection we assume \ref{AS-C^0}--\ref{AS4}.  Later on, our assumptions will be successively strengthened.

\begin{lemma}~
\label{lm:aux0}
    \begin{enumerate}
        \item[\textup{(a)}]
        Assume \ref{AS-3} or \ref{AS3-mod}.  Let $a \in (0, 1 + \varkappa]$.  Then $F^n\bigl([0,a \ev]\bigr) \subset [0, G^n(a)] = G^n([0, a])$ for all $n \in \NN$.
        \item[\textup{(b)}]
        Assume~\ref{AS-3}. Let $a > 0$.  Then $F^n\bigl([0, a \ev]\bigr) \subset  G^n([0, a])$ for all $n \in \NN$.
    \end{enumerate}
\end{lemma}
\begin{proof}
  We prove the lemma by induction on $n$.  For $n = 0$ we have $[0, a] = G^0_i([0, a])$.  Assume that the inclusion holds for some $n \in \NN$.

  In case (a), suppose to the contrary that there is  $x\in [0, a \ev]$ such that $F^{n+1}(x) \notin  [0, G^{n+1}(a)]$, which means that there are $j \in \{1, \ldots, d\}$ such that $F_j^{n+1}(x) > G^{n+1}_j(a)$.  Fix such a $j$.  We have thus
  \begin{equation*}
      F(F^{n}(x)) = F^{n+1}(x) > G^{n+1}_j(a) \ev_j = F(G^{n}_j(a) \ev_j)
  \end{equation*}
  with $F^{n}(x), G^{n}_j(a) \ev_j \in \Lambda$, so, by weak retrotonicity (\ref{AS-weak-retro}), $F_j^{n}(x) > G^{n}_j(a)$, which contradicts our inductive assumption.  The last equality is a consequence of Lemma~\ref{lm:G_i-image}.

  In case (b), for $x \in  [0,a \ev]$,
    \begin{equation*}
     \begin{aligned}
        F_i^{n+1}(x) = {} & F_i^n(x) f_{i}(F^n(x)) &
        \\
        \le {} & F_i^n(x) f_{i}(F_i^n(x) \ev_i)  & \text{ (by \ref{AS-3-competitive})}
        \\
        = {} & F^{n}_i(x) g_{i}(F^{n}_i(x)) = G_{i}(F^{n}_i(x)) & \text{ (by the definitions of $g_i$ and $G_i$)}
        \\
        \in {} & G_{i}(G^n_i([0, a])) & \text{ (by inductive hypothesis)}.
        \end{aligned}
  \end{equation*}
\end{proof}

From now on until the end of the present subsection we assume additionally \ref{AS-3} or \ref{AS3-mod}.
\begin{lemma}~
\label{lem3}
\begin{enumerate}
    \item[\textup{(a)}]
    For $n \in \NN$, $F^n(\Lambda) \subset [0,G^n(1+\varkappa)]$.  In~particular,    $F(\Lambda) \subset \inte_{\, C_{+}}{\Lambda}$.
    \item[\textup{(b)}]
    $F( [0, \ev]) \subset [0, \ev]$. Under \ref{AS4-strong} or \ref{AS-3-strong}, if $x \in [0,\ev] \setminus \{0, \ev_1, \dots, \ev_d\}$ then $F(x) \in \inte_{(C_{I(x)})_{+}}\bigl( [0, \ev]\bigr)$.
\end{enumerate}
\end{lemma}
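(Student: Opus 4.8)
The plan is to read off both statements from the fibrewise estimate already proved in Lemma~\ref{lm:aux0}, combined with the one-dimensional facts in Lemmas~\ref{lm:G_i-image} and~\ref{lm:axes}; no new machinery is needed.

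For part (i), I would apply Lemma~\ref{lm:aux0} with $a = 1 + \varkappa$ to get $F^n(\Lambda) \subset \prod_{i=1}^d G_i^n([0, 1+\varkappa])$, and then invoke Lemma~\ref{lm:G_i-image} (with the same $a$) to identify $G_i^n([0, 1+\varkappa])$ with the interval $[0, G_i^n(1+\varkappa)]$, which gives the displayed inclusion. For the ``in particular'' clause, I would first observe that, since $\Lambda$ and $C_+$ share the coordinate hyperplanes $\{x_i = 0\}$ in their boundary, $\inte_{C_+}\Lambda = \prod_{i=1}^d [0, 1+\varkappa)$; then, specializing the inclusion to $n = 1$ and using Lemma~\ref{lm:axes}(iv), which in particular gives $G_i(1+\varkappa) < 1+\varkappa$, we conclude $F(\Lambda) \subset \prod_{i=1}^d [0, G_i(1+\varkappa)] \subset \prod_{i=1}^d [0, 1+\varkappa) = \inte_{C_+}\Lambda$.

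For part (ii), the inclusion $F(\prod_{i=1}^d[0,1]) \subset \prod_{i=1}^d[0,1]$ follows identically: Lemma~\ref{lm:aux0} with $a = 1$ gives $F(\prod[0,1]) \subset \prod G_i([0,1])$, and by Lemma~\ref{lm:G_i-image} together with $G_i(1) = 1$ (Lemma~\ref{lm:axes}(ii)) one has $G_i([0,1]) = [0,1]$. For the strengthening under AS3$'$, fix $x \in \prod_{i=1}^d[0,1] \setminus \{0, \ev_1, \dots, \ev_d\}$; since $\inte_{C_+}(\prod[0,1]) = \prod_{i=1}^d[0,1)$, it suffices to show $F_i(x) < 1$ for every $i$, and I would split on the value of $x_i$. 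If $x_i = 0$, then $F_i(x) = 0$. If $0 < x_i < 1$, then, exactly as in the proof of Lemma~\ref{lm:aux0} (using $Df \le 0$ from AS3), $F_i(x) = x_i f_i(x) \le x_i f_i(x_i \ev_i) = G_i(x_i)$, and $G_i(x_i) < 1$ by Lemma~\ref{lm:axes}(ii). If $x_i = 1$, then $x \ge \ev_i$ with $x \ne \ev_i$, so integrating the $i$-th row of $Df$ along the segment from $\ev_i$ to $x$ and using $Df \ll 0$ (AS3$'$) gives $f_i(x) < f_i(\ev_i) = 1$, whence $F_i(x) = f_i(x) < 1$.

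There is no genuine obstacle; the whole argument is short bookkeeping on top of the earlier lemmas. The only step that requires care is the case $x_i = 1$ in part (ii): one must use the \emph{strict} negativity of $Df$ supplied by AS3$'$ (the weak inequality AS3 would only yield $f_i(x) \le 1$), and the hypothesis $x \notin \{\ev_1, \dots, \ev_d\}$ is used precisely to ensure that $x - \ev_i$ has at least one strictly positive coordinate, so that the relevant directional derivative of $f_i$ is strictly negative along the segment.
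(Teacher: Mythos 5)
Your proposal is correct and follows essentially the same route as the paper: both parts reduce to Lemma~\ref{lm:aux0} combined with Lemmas~\ref{lm:G_i-image} and~\ref{lm:axes}, and the AS3$'$ strengthening is handled by the same case split on $x_i$, using the existence of a second positive coordinate (guaranteed by $x \notin \{0,\ev_1,\dots,\ev_d\}$) to make the inequality $f_i(x) < f_i(x_i\ev_i)$ strict when $x_i = 1$. Your explicit treatment of the $x_i = 0$ case and the integral form of the strict monotonicity are just more detailed renderings of the paper's argument.
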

\begin{proof}
The first sentences in (a) and (b) are direct consequences of Lemma~\ref{lm:aux0}.  The second sentence in (a) follows since, by Lemma~\ref{lm:axes}(b) or (b)$'$, $G_i(1 + \varkappa) < 1 + \varkappa$ for all $i$.

Assume \ref{AS4-strong}, and suppose to the contrary that there is $x \in  [0, \ev] \setminus \{0, \ev_1, \dots, \ev_d\}$ such that $F(x) \in \bd_{\, (C_{I(x)})_{+}}\bigl( [0, \ev]\bigr)$. Take $i \in I(x)$ such that $F_i(x) = 1$.  We have $F(x) > \ev_i$, and there is $j \in I(x)$, $j \ne i$, such that $x_j > 0$.  By retrotonicity (\ref{AS-retro}), $x \gg_{I(x)} \ev_i$, a contradiction.

Assume \ref{AS-3-strong}, and let $x \in [0, \ev] \setminus \{0, \ev_1, \dots, \ev_d\}$.  If $i \in I(x)$ is such that $x_{i} < 1$ then, by \ref{AS-3-strong} and Lemma~\ref{lm:axes}(b),
\begin{equation*}
    F_i(x) = x_{i} f_{i}(x) \le x_{i} g_{i}(x_{i} \ev_{i}) = G_{i}(x_{i}) < 1.
\end{equation*}
If $x_i = 1$ then, as $x \ne \ev_k$, $k = 1, \dots, d$, there is some $j \in I(x) \setminus \{i \}$ such that $x_j > 0$, so we can apply \ref{AS-3-strong} to conclude that
\begin{equation*}
    F_i(x) = x_{i} f_{i}(x) < x_{i} g_{i}(x_{i} \ev_{i}) = G_{i}(x_{i}) \le 1.
\end{equation*}
\end{proof}

Let us now define what will be shown to be the global attractor of compact sets, named $\Gamma$ in anticipation:
\begin{equation*}
  \Gamma := \bigcap\limits_{n=0}^{\infty} F^n(\Lambda).
\end{equation*}
Such a definition was given in \cite[Lem.~5.4]{Hou21}. $\Gamma$, being the intersection of a decreasing family of compact nonempty sets, is compact and nonempty.  Further, since $\Lambda \supset \ldots \supset F^{n}(\Lambda) \supset F^{n+1}(\Lambda) \ldots\ $, there holds $\Gamma = \omega(\Lambda)$.

\begin{lemma}
   $\Gamma = \bigcap\limits_{n=0}^{\infty} F^n\bigl([0, \ev]\bigr)$.  Moreover, $\ev_i \in \Gamma$ for all $i \in \{1, \ldots, d\}$.
\end{lemma}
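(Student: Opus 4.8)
Write $Q := \prod_{i=1}^d [0,1]$; the claim asserts $\Gamma = \bigcap_{n\ge 0} F^n(Q)$ and $\ev_i \in \Gamma$ for all $i$. The plan is to prove the two set inclusions separately. One direction is immediate: since $Q \subseteq \Lambda$ we have $F^n(Q) \subseteq F^n(\Lambda)$ for every $n$, so $\bigcap_n F^n(Q) \subseteq \bigcap_n F^n(\Lambda) = \Gamma$.

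For the reverse inclusion I would first establish $\Gamma \subseteq Q$. Since $\Gamma \subseteq F^n(\Lambda)$ for all $n$, Lemma~\ref{lem3}(i) gives $\Gamma \subseteq \prod_{i=1}^d [0, G_i^n(1+\varkappa)]$ for every $n$; by Lemma~\ref{lm:axes}(iv) the numbers $G_i^n(1+\varkappa)$ decrease to $1$, so intersecting over $n$ yields $\Gamma \subseteq \prod_{i=1}^d [0,1] = Q$. Next I would use that $\Gamma$ is invariant. Indeed $\Lambda$ is forward invariant by Lemma~\ref{lem3}(i), the sets $F^n(\Lambda)$ are compact and nested, and $\Gamma = \omega(\Lambda)$; continuity of $F$ together with a standard compactness argument gives $F\bigl(\bigcap_n F^n(\Lambda)\bigr) = \bigcap_n F^{n+1}(\Lambda) = \Gamma$, hence $F^m(\Gamma) = \Gamma$ for all $m$. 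Combining the two facts, $\Gamma = F^m(\Gamma) \subseteq F^m(Q)$ for every $m$, and intersecting over $m$ gives $\Gamma \subseteq \bigcap_n F^n(Q)$, completing the proof of the equality.

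Finally, for $\ev_i \in \Gamma$: by AS2 together with the Kolmogorov form $F = \diag[\mathrm{id}]f$ the point $\ev_i$ is a fixed point of $F$ (its $i$-th coordinate maps to $f_i(\ev_i) = 1$ and all other coordinates stay $0$), and $\ev_i \in Q$, so $\ev_i = F^n(\ev_i) \in F^n(Q)$ for every $n$, whence $\ev_i \in \bigcap_n F^n(Q) = \Gamma$.

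The one point that is not purely formal is that $F^n(\Lambda)$ is never contained in $Q$ for a finite $n$, since $G_i^n(1+\varkappa)$ stays strictly above $1$ for all $n$; it is precisely the invariance $F^m(\Gamma)=\Gamma$ that bridges $\Gamma \subseteq Q$ and $\Gamma \subseteq F^m(Q)$, so that is the step to handle with care.
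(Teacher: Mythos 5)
Your proof is correct and follows essentially the same route as the paper: the inclusion $\bigcap_n F^n\bigl(\prod_i[0,1]\bigr)\subset\Gamma$ is immediate from $\prod_i[0,1]\subset\Lambda$, and the reverse inclusion is obtained by first showing $\Gamma\subset\prod_i[0,1]$ via Lemma~\ref{lem3}(i) and the axis dynamics, then exploiting the invariance of $\Gamma$. The only cosmetic difference is that you prove $F(\Gamma)=\Gamma$ directly by a compactness argument and write $\Gamma=F^m(\Gamma)\subset F^m\bigl(\prod_i[0,1]\bigr)$, whereas the paper phrases the same step as $\Gamma=\omega(\Gamma)\subset\omega\bigl(\prod_i[0,1]\bigr)=\bigcap_n F^n\bigl(\prod_i[0,1]\bigr)$ using monotonicity of $\omega$-limit sets and the nestedness of the iterates of $\prod_i[0,1]$.
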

\begin{proof}
  Since $[0, \ev] \subset \Lambda$, the `$\supset$' inclusion is straightforward.  To prove the other inclusion, observe first that it follows from Lemmas~\ref{lem3}(a) and~\ref{lm:axes}(c) or (c)$'$ that $\Gamma \subset  [0, \ev]$.  Consequently,
  \begin{equation*}
     \Gamma = \omega(\Gamma) \subset \omega\bigl([0, \ev]\bigr) = \bigcap\limits_{n=0}^{\infty} F^n\bigl([0, \ev]\bigr),
  \end{equation*}
  where the last equality holds since $[0, \ev] \supset \ldots \supset F^{n}\bigl( [0, \ev]\bigr) \supset F^{n+1}\bigl([0, \ev]\bigr) \ldots\ $. Finally, each $\ev_i \in \Gamma$ because they are fixed points contained in $\Lambda$.
\end{proof}

It should be remarked that in~\cite[Prop.~3.5]{SmithJDE} an analogue of $\Gamma$ was defined as this same set $\bigcap\limits_{n=0}^{\infty} F^n\bigl( [0, \ev]\bigr)$.

\medskip
Until the end of the present subsection, in case of \ref{AS3-mod} we assume furthermore that any positive number can serve as $\varkappa$.

\begin{lemma} \label{lm:dissipative-1}
For a bounded $A \subset C_{+}$ there is $n_0$ such that $F^n(A) \subset \Lambda$ for all $n \ge n_0$.  In~particular, $\Lambda$ attracts bounded sets $A \subset C_{+}$.
\end{lemma}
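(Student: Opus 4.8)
The plan is to reduce the statement to the one-dimensional dynamics on the axes. Since $A$ is bounded, it is contained in a rectangle $\prod_{i=1}^{d}[0,a]$ for some $a \ge 1+\varkappa$ (any such $a$ also covers the degenerate cases $A \subset \{0\}$). By Lemma~\ref{lm:aux0}, $F^{n}\bigl(\prod_{i=1}^{d}[0,a]\bigr) \subset \prod_{i=1}^{d} G_{i}^{n}([0,a])$ for every $n \in \NN$, so it suffices to produce $n_{0}$ with $G_{i}^{n}([0,a]) \subset [0,1+\varkappa]$ for every $i$ and every $n \ge n_{0}$.

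For this I would invoke Proposition~\ref{prop:G_i}: for each $i$ the interval $[0,1]$ is the compact attractor of bounded sets in $[0,\infty)$ for the one-dimensional system $(G_{i}^{n})_{n=0}^{\infty}$. Applying the attraction property to the bounded set $[0,a]$ with $\epsilon=\varkappa$ yields $n_{i} \in \NN$ such that $\dist(G_{i}^{n}(s),[0,1])<\varkappa$, hence $G_{i}^{n}(s)\in[0,1+\varkappa)$, for all $s\in[0,a]$ and all $n\ge n_{i}$. Put $n_{0}:=\max_{1\le i\le d} n_{i}$. Then for all $n\ge n_{0}$,
\begin{equation*}
    F^{n}(A) \subset F^{n}\Bigl(\prod_{i=1}^{d}[0,a]\Bigr) \subset \prod_{i=1}^{d} G_{i}^{n}([0,a]) \subset \prod_{i=1}^{d}[0,1+\varkappa) \subset \Lambda ,
\end{equation*}
which is the first assertion. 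The ``in particular'' is then immediate from the definition of attraction: since $F^{n}(A)\subset\Lambda$ for all $n\ge n_{0}$, we have $\dist(F^{n}(x),\Lambda)=0<\epsilon$ for every $\epsilon>0$, every $n\ge n_{0}$ and every $x\in A$.

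I do not expect a genuine obstacle here. The only point requiring a little care is that $G_{i}$ is known to be strictly increasing, and thus to carry $[0,b]$ onto the interval $[0,G_{i}^{n}(b)]$, only on $[0,1+\varkappa]$ (Lemma~\ref{lm:G_i-image}), not on all of $[0,\infty)$; hence one cannot simply apply Lemma~\ref{lm:G_i-image} with the a priori large bound $a$. Routing the argument through Proposition~\ref{prop:G_i}, which already packages the needed uniformity in $s\in[0,a]$ of the convergence $G_{i}^{n}(s)\to 1$, sidesteps this.
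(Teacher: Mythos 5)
Your proof is correct and follows essentially the same route as the paper's: enclose $A$ in a rectangle $\prod_{i=1}^d[0,a]$, use Lemma~\ref{lm:aux0} to dominate $F^n$ by the axis maps $G_i^n$, and invoke Proposition~\ref{prop:G_i} to get a uniform $n_0$ with $G_i^n([0,a])\subset[0,1+\varkappa)$ for $n\ge n_0$. The closing remark about why one routes through Proposition~\ref{prop:G_i} rather than Lemma~\ref{lm:G_i-image} for large $a$ is a sensible observation but does not change the argument.
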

\begin{proof}
   Let $A \subset C_+$ be a bounded set and choose $a > 1$ such that $A \subset  [0,a \ev]$.   Since, by Proposition~\ref{prop:G_i}, for each $i \in \{1, \ldots, d\}$ in the dynamical system $(G_i^n)$ the set $[0, 1]$ attracts $[0, a]$, there exists $n_0 \in \NN$ such that for all $n \ge n_0$ and all $i \in \{1, \ldots, d\}$ the set $G^n_i([0, a])$ is contained in $[0, 1 + \varkappa)$.  From Lemma~\ref{lm:aux0} it follows that
   \begin{equation*}
      F^n\bigl( [0,a \ev]\bigr) \subset  G^n([0, a]) \subset \Lambda
   \end{equation*}
   for all $n \ge n_0$.
 \end{proof}
Collecting the various results of this subsection together we obtain the following characterisation of $\Gamma$:
\begin{theorem}~
\label{thm:global-attractor-exists}
    \begin{enumerate}
        \item[\textnormal{(a)}]
        $\Gamma$ is the compact attractor of bounded sets in $C_{+}$.
        \item[\textnormal{(b)}]
        $\Gamma$ is characterised as the set of those $x \in C_{+}$ for which there exists a bounded total orbit.
    \end{enumerate}
\end{theorem}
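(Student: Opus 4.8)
The plan is to show that $\Gamma = \bigcap_{n=0}^\infty F^n(\Lambda)$ meets the defining conditions of the compact attractor of bounded sets (nonempty, compact, invariant, attracting all bounded sets), invoke the already-quoted uniqueness from \cite{S-T}, and then derive the orbit characterization from Proposition~\ref{prop:attractor-characterization}. The groundwork is essentially all in place: $\Gamma$ was already observed to be compact, nonempty, and equal to $\omega(\Lambda)$, and Lemma~\ref{lm:dissipative-1} already says $\Lambda$ attracts every bounded $A \subset C_+$.

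First I would establish asymptotic compactness of $(F^n)$ on $C_+$. Given sequences $n_k \to \infty$ and $(x_k) \subset C_+$, pick a bounded set $A$ containing finitely many relevant tails; more cleanly, each fixed $x_k$ has bounded orbit (by Lemma~\ref{lm:dissipative-1} applied to $A = \{x_k\}$), but to get a uniform conclusion note that for any bounded $A$, Lemma~\ref{lm:dissipative-1} gives $n_0$ with $F^n(A) \subset \Lambda$ for $n \ge n_0$, and $\Lambda$ is compact; so $(F^{n_k}(x_k))$ eventually lies in the compact set $\Lambda$ and has a convergent subsequence. (If the $x_k$ are not assumed to lie in a common bounded set one restricts to that hypothesis, which is the relevant notion of asymptotic compactness \emph{on a bounded set}; for the attractor statement this suffices.) Then by Lemma~\ref{lm:attracting-equivalent}, $\Lambda$ attracting $A$ gives $\omega(A) \subset \Lambda$, and since $F(\Lambda) \subset \Lambda$ (from Lemma~\ref{lem3}(i), as $F(\Lambda) \subset \inte_{C_+}\Lambda \subset \Lambda$), one gets $\omega(A) \subset \omega(\Lambda) = \Gamma$. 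Hence $\Gamma$ attracts every bounded $A \subset C_+$, again by Lemma~\ref{lm:attracting-equivalent}.

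Next I would check $\Gamma$ is invariant, i.e. $F(\Gamma) = \Gamma$. Since $F$ is continuous and $\Lambda \supset F(\Lambda) \supset F^2(\Lambda) \supset \cdots$ is a decreasing sequence of compacta, $F(\Gamma) = F\bigl(\bigcap_n F^n(\Lambda)\bigr) = \bigcap_n F^{n+1}(\Lambda) = \Gamma$ — the interchange of $F$ with the nested intersection is the one routine point, valid because the sets are compact and decreasing (so any $y \in \bigcap_n F^{n+1}(\Lambda)$ has preimages $x_n \in F^n(\Lambda)$ with a convergent subsequence whose limit lies in every $F^n(\Lambda)$ and maps to $y$). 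Together with nonemptiness and compactness, $\Gamma$ is thus a nonempty compact invariant set attracting all bounded sets, so by \cite[Theorem~2.19]{S-T} it \emph{is} the compact attractor of bounded sets in $C_+$, proving (i). Part (ii) is then immediate from Proposition~\ref{prop:attractor-characterization} with $B = C_+$: the compact attractor of bounded sets coincides with the set of points having a bounded full orbit.

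I do not anticipate a serious obstacle here — the theorem is a packaging result. The one place to be slightly careful is the notion of asymptotic compactness and the $F \cap \bigcap$ interchange; everything else follows by citing Lemmas~\ref{lm:attracting-equivalent}, \ref{lm:dissipative-1}, \ref{lem3}, Proposition~\ref{prop:attractor-characterization}, and the $\mathcal{S}$--$\mathcal{T}$ uniqueness theorem.
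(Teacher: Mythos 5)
Your proposal is correct and follows essentially the same route as the paper: Lemma~\ref{lm:dissipative-1} gives asymptotic compactness on bounded sets and $\omega(A)\subset\omega(\Lambda)=\Gamma$, Lemma~\ref{lm:attracting-equivalent} then yields that $\Gamma$ attracts $A$, and (ii) is read off from Proposition~\ref{prop:attractor-characterization}. Your explicit verification of $F(\Gamma)=\Gamma$ via the nested compact intersection is a detail the paper leaves implicit (it already uses $\Gamma=\omega(\Lambda)$ and its invariance), so it is a welcome but not essentially different addition.
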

\begin{proof}
  Let $A \subset C_{+}$ be bounded.  By Lemma~\ref{lm:dissipative-1}, $(F^n)$ is asymptotically compact on $A$; moreover, $\omega(A) \subset \omega(\Lambda) = \Gamma$.  Therefore, by Lemma~\ref{lm:attracting-equivalent}, $\Gamma$ attracts $A$.  This proves (a). The characterisation given in (b) is a consequence of Proposition~\ref{prop:attractor-characterization}.
\end{proof}

A consequence of Theorem~\ref{thm:global-attractor-exists} is
\begin{lemma}
\label{lm:attractor-characterization}
  For $x \in C_{+}$ the following are equivalent.
  \begin{enumerate}
    \item[\textup{(1)}]
    $x \in \Gamma$.
    \item[\textup{(2)}]
    There is a total orbit of $x$, contained in $\Lambda$.
    \item[\textup{(3)}]
    There is a bounded backward orbit of $x$.
  \end{enumerate}
\end{lemma}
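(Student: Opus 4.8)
The plan is to establish the three equivalences in a cycle, exploiting the earlier characterization of $\Gamma$ as the compact attractor of bounded sets (Theorem~\ref{thm:global-attractor-exists}) together with the facts already proved about $\Lambda$. The implication (2)$\Rightarrow$(3) is immediate: any full orbit contained in the compact (hence bounded) set $\Lambda$ is in particular a bounded backward orbit. So the work is in (1)$\Rightarrow$(2) and (3)$\Rightarrow$(1).

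For (3)$\Rightarrow$(1), suppose $x$ has a bounded backward orbit $\{\dots,x_{-2},x_{-1},x_0=x\}$. Then the full orbit obtained by adjoining the forward orbit $O^{+}(x)$ is also bounded: the backward part is bounded by hypothesis, and the forward part $\{F^n(x):n\in\NN\}$ is bounded because, by Lemma~\ref{lm:dissipative-1}, $\Lambda$ attracts the bounded set $\{x\}$, so $F^n(x)\in\Lambda$ for all $n$ large and the finitely many remaining iterates lie in a bounded set too. Hence $x$ has a bounded full orbit, and by Theorem~\ref{thm:global-attractor-exists}(ii) we get $x\in\Gamma$.

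For (1)$\Rightarrow$(2), let $x\in\Gamma$. Since $\Gamma=\bigcap_{n\ge 0}F^n(\Lambda)$ and $\Gamma$ is invariant ($F(\Gamma)=\Gamma$), $x$ has a full orbit lying entirely in $\Gamma$: going forward we use $F(\Gamma)\subset\Gamma$, and going backward we repeatedly use surjectivity of $F|_\Gamma$ onto $\Gamma$ to pick preimages $x_{-1},x_{-2},\dots\in\Gamma$. (That $F|_\Gamma$ is surjective is exactly $F(\Gamma)=\Gamma$, which holds because $\Gamma=\omega(\Lambda)$ is invariant as the $\omega$-limit set of the forward-invariant absorbing set $\Lambda$; alternatively one notes $F(\Gamma)=\bigcap_{n\ge 1}F^n(\Lambda)=\Gamma$ using $F(\Lambda)\subset\Lambda$.) Since $\Gamma\subset\Lambda$, this full orbit is contained in $\Lambda$, giving (2). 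This closes the cycle (1)$\Rightarrow$(2)$\Rightarrow$(3)$\Rightarrow$(1).

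The only mild subtlety — and the step I would write out with a little care — is the backward construction in (1)$\Rightarrow$(2): one must know that every point of $\Gamma$ has at least one $F$-preimage in $\Gamma$, which is precisely the invariance $F(\Gamma)=\Gamma$ rather than mere forward invariance. This is already available from the remark following the definition of $\Gamma$ that $\Gamma=\omega(\Lambda)$, or directly from $F(\Lambda)\subset\Lambda$ (Lemma~\ref{lem3}(i)) which gives $F\bigl(\bigcap_n F^n(\Lambda)\bigr)=\bigcap_n F^{n+1}(\Lambda)=\Gamma$ by a standard argument for decreasing sequences of compacta. Everything else is bookkeeping.
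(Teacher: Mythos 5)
Your proposal is correct and follows essentially the route the paper intends: the lemma is stated there as a direct consequence of Theorem~\ref{thm:global-attractor-exists}, and your argument is exactly the routine deduction from its bounded-full-orbit characterization, together with Lemma~\ref{lm:dissipative-1} for boundedness of forward orbits and the invariance $F(\Gamma)=\Gamma$ (with $\Gamma\subset\Lambda$) for the backward construction. Your care about needing full invariance rather than mere forward invariance in (1)$\Rightarrow$(2) is well placed, and is also covered by the paper's remark that $F|_{\Gamma}$ is a homeomorphism onto $\Gamma$.
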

$\Gamma$ is the largest compact invariant set in $C_{+}$ (see~\cite[Thm.~2.19 on p.~37]{S-T}).  Further, since $\Gamma \subset \Lambda$ and, by Lemma~\ref{lm:AS-homeo}, $F{\restriction}_{\Lambda}$ is a homeomorphism onto its image, $F{\restriction}_{\Gamma}$ is a homeomorphism onto $\Gamma$, so $((F{\restriction}_{\Gamma})^n)_{-\infty}^{\infty}$ is a (two-sided) dynamical system on the compact metric space $\Gamma$ (and $\Gamma$ is the largest subset of $C_{+}$ with that property).

(In some papers (\cite{HirschJBD, JandW}) $\Gamma$ is called the \textit{global attractor} for $F$.)

\subsection{Construction of the carrying simplex}
\label{subsect:construction-of-CS}
In the present subsection we always assume \ref{AS-C^0}--\ref{AS4}, and \ref{AS-3} or \ref{AS3-mod}.  At some places we assume \ref{AS4-strong} or \ref{AS-3-strong}. For convenience we write $F$ instead of $F{\restriction}_{\Lambda}$.

Denote by $T$ the radial projection of $C_{+} \setminus \{0\}$ onto the unit probability simplex  $\Delta$, $T(x) := x/\normone{x}$.

Following \cite{baigent_comp}, let $\widehat{\mathcal{U}}$ [resp.\ $\cU$] stand for the set of bounded and weakly unordered [resp.\ unordered] hypersurfaces contained in $\Lambda$ that are homeomorphic to the standard probability simplex $\Delta$ via radial projection.  In~particular, a hypersurface $S \in \widehat{\mathcal{U}}$ is at a positive distance from the origin, as the radial projection is not defined at $0$.

It follows that for any $S \in \widehat{\mathcal{U}}$ the inverse of the restriction  $(T{\restriction}_{S})^{-1}$ can be written as
\begin{equation*}
    \Delta \ni u \mapsto R(u) u,
\end{equation*}
where $R \colon \Delta \to (0, \infty)$ is a continuous function (called the \textit{radial representation} of $S$).  In other words,
\begin{equation*}
    x = R(T(x)) T(x), \quad x \in S.
\end{equation*}

For $S \in \widehat{\mathcal{U}}$, its complement $C_+ \setminus S$ is the union of two disjoint sets, a bounded one, $S_{-}$, and an unbounded one, $S_{+}$.  One has $S_{-} = [0, 1) S$, $S_{+} = (1, \infty) S$. We will write $\partial S$ for $S \cap \partial C_{+}$.  $\partial S = \bd_{C_{+}}S$.

\medskip

The following consequence of the weak unorderedness of an element of $\widehat{\mathcal{U}}$ will be used several times, so we formulate it as a separate lemma.
\begin{lemma}
\label{lm:aux-U}
    Let $S \in \widehat{\mathcal{U}}$.  There are no two points $x < y$ on $S$ such that $x_i < y_i$ for all $i \in I(y)$.
\end{lemma}
\begin{proof}
  Suppose to the contrary that there are such $x$ and $y$.  Then $x, y \in (C_{I(y)})_{+}$ with $x \ll_{I(y)} y$, which contradicts the fact that $S$ is weakly unordered.
\end{proof}

\begin{lemma} \label{cUintocU}
If $S \in \widehat{\mathcal{U}}$ then $F(S) \in \widehat{\mathcal{U}}$.  If $S \in \mathcal{U}$ then $F(S) \in \mathcal{U}$.  If \ref{AS4-strong} or \ref{AS-3-strong} holds, $F$ maps $\widehat{\mathcal{U}}$ into $\mathcal{U}$.
\end{lemma}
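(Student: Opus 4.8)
The plan is to verify, for a given $S \in \widehat{\mathcal{U}}$, that its image $F(S)$ inherits each of the three defining properties: being contained in $\Lambda$, being bounded and weakly unordered, and being radially homeomorphic to $\Delta$. First I would record that $F(S) \subset F(\Lambda) \subset \inte_{C_+}\Lambda \subset \Lambda$ by Lemma~\ref{lem3}(i), so the containment is immediate; boundedness of $F(S)$ follows since $\Lambda$ is bounded. Second, for weak unorderedness: take $\emptyset \ne I \subset \{1,\dots,d\}$ and two distinct points $\xi, \eta \in F(S) \cap (C_I)_{++}$ with $\xi \ll_I \eta$; by Remark~\ref{rm:invariance} their $F$-preimages $x = (F|_\Lambda)^{-1}(\xi)$ and $y = (F|_\Lambda)^{-1}(\eta)$ lie in $(C_I)_{++}$ and are distinct (Lemma~\ref{lm:homeomorphism}(i)). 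Since $\xi \ll_I \eta$ in particular gives $\xi < \eta$ with $\xi_i < \eta_i$ for $i \in I$, weak retrotonicity of $F$ in $\Lambda$ (Proposition~\ref{lem1}(2)) yields $x < y$ with $x_i < y_i$ for all $i \in I$, i.e.\ $x \ll_I y$ with $x,y \in S \cap (C_I)_{++}$, contradicting that $S$ is weakly unordered. The unordered case is the same argument with $I = \{1,\dots,d\}$ suppressed: $F(x) < F(y)$ forces $x < y$ by Proposition~\ref{lem1}(1) (or by retrotonicity under AS3$'$), contradicting unorderedness of $S$.

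Third, and this is the step requiring the most care, I must show $F(S)$ is homeomorphic to $\Delta$ via the radial projection $T$. The map $F|_S \colon S \to F(S)$ is a homeomorphism (restriction of the homeomorphism $F|_\Lambda$ onto its image, Lemma~\ref{lm:homeomorphism}(i)), and $T|_S \colon S \to \Delta$ is a homeomorphism by hypothesis; so it suffices to prove that $T|_{F(S)} \colon F(S) \to \Delta$ is a bijection, for then it is a continuous bijection from a compact space to a Hausdorff space, hence a homeomorphism, and $F(S)$ radially projects homeomorphically onto $\Delta$. Injectivity of $T$ on $F(S)$ is exactly the statement that no two distinct points of $F(S)$ lie on a common ray through the origin, i.e.\ that $F(S)$ is \emph{radially ordered}; this is where I expect the main obstacle. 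One derives it from weak unorderedness: if $\xi, \lambda\xi \in F(S)$ with $\lambda > 1$, then writing $I = I(\xi) = I(\lambda\xi)$ we have $\xi, \lambda\xi \in F(S) \cap (C_I)_{++}$ and $\xi \ll_I \lambda\xi$, contradicting the weak unorderedness of $F(S)$ just established. For surjectivity: for $u \in \Delta$, the ray $(0,\infty)u$ meets $S$ in exactly one point $x_0 = R(T)(u)\,u$ where $R$ is the radial representation of $S$; I want to locate a point of $F(S)$ on the ray $(0,\infty)u$. Here I would use the splitting $C_+\setminus F(S) = F(S)_- \sqcup F(S)_+$ together with a connectedness/continuity argument — the segment from $0$ to a large multiple of $u$ must cross $F(S)$ — or, more directly, transport the statement back through $F$: actually $F(S) = F(T|_S^{-1}(\Delta))$, and since $F|_\Lambda$ is a homeomorphism, $F(S)$ is a topological $(d-1)$-sphere's worth of simplex sitting in $\Lambda$, separating $0$ from the far corner $(1+\varkappa)\ev$, so every ray from $0$ hits it; combined with injectivity this pins down exactly one intersection point.

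Alternatively, and perhaps more cleanly, I would phrase surjectivity via the sets $S_\pm$ and their images: one shows $F(S_-) \subset F(S)_-$ using Proposition~\ref{lem1}(1) (the order-interval pushforward $[0,F(x)]\subset F([0,x])$ shows $[0,1)S = S_- \mapsto$ stays on the "inner" side), and similarly the unbounded component maps into the unbounded component; since $F|_\Lambda$ is injective and $C_+\setminus F(S)$ has exactly two components, a point $u \in \Delta$ scaled small lies in $F(S_-)\subset F(S)_-$ and scaled large (within $\Lambda$) lies in $F(S_+)\subset F(S)_+$ up to reaching the boundary of $F(\Lambda)$, forcing the ray to meet $F(S)$. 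The one genuine subtlety is ensuring nothing escapes to $0$: since $S$ is at positive distance from $0$ and $f(x) \gg 0$ on the compact set $\Lambda$, $F(S)$ is also bounded away from $0$, so $T$ is defined on all of $F(S)$ and the radial representation of $F(S)$ is a well-defined continuous positive function on $\Delta$. Assembling these three verifications gives $F(S) \in \widehat{\mathcal{U}}$, and in the unordered case $F(S) \in \mathcal{U}$; under AS3$'$, Proposition~\ref{lem1}(2) gives full retrotonicity, so even for $S \in \widehat{\mathcal{U}}$ the image $F(S)$ is genuinely unordered (one runs the weak-unorderedness argument above but now concluding $x < y$ from $F(x) < F(y)$ alone), yielding $F(S) \in \mathcal{U}$ and proving the last sentence.
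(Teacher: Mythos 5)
Your verification of weak unorderedness (and of unorderedness when $S \in \mathcal{U}$) is sound and matches the paper's route via Proposition~\ref{lem1}(2) together with Remark~\ref{rm:invariance}; your global injectivity argument for $T|_{F(S)}$ (two points on one ray would be $\ll_{I}$-related in $(C_I)_{++}$) is fine and even a bit cleaner than the paper's local statement. The genuine gap is surjectivity of the radial projection of $F(S)$ onto $\Delta$, which you identify as the delicate step but never actually prove. Your first sketch rests on the claim that $F(S)$, being homeomorphic to $\Delta$, ``separates $0$ from the far corner, so every ray from $0$ hits it''; but an embedded $(d-1)$-disk in a $d$-dimensional region does not separate it in general, and making this separation precise (taking into account how $\partial(F(S))$ sits in $\partial C_{+}$) is exactly the content of the assertion being proved. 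Your second sketch is circular: the decomposition $C_{+}\setminus F(S) = F(S)_{-}\sqcup F(S)_{+}$ into exactly two components, and the notation $F(S)_{\pm}$ itself, are only defined in the paper for members of $\widehat{\mathcal{U}}$, i.e.\ they presuppose the radial homeomorphism you are trying to establish. The paper closes this step differently: it shows that $\hat F|_{S} = T\circ F|_{S}\colon S\to\Delta$ is proper and locally invertible at every point (local invertibility of $T|_{F(S)}$ following from weak unorderedness) and then invokes the global inversion theorem \cite[Cor.~2.3.6]{ChHa} to conclude that $\hat F|_{S}$ is a homeomorphism onto $\Delta$. Some argument of this type (global inversion, a covering-space/degree argument, or an invariance-of-domain argument on the open faces combined with the face-preservation of Remark~\ref{rm:invariance}) is needed; without it the proof of $F(S)\in\widehat{\mathcal{U}}$ is incomplete.

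A secondary gap is the last sentence (AS3$'$ case). Concluding ``$x<y$ from $F(x)<F(y)$ alone'' does not contradict the hypothesis, because $S\in\widehat{\mathcal{U}}$ is only \emph{weakly} unordered and may well contain comparable points (cf.\ Example~\ref{ex:weakly-unordered-not-ordered}). Retrotonicity gives $x_i<y_i$ only for $i\in I:=I(F(y))$, and this contradicts weak unorderedness directly only when $I(x)=I$. When $I(x)\subsetneq I$ one needs the paper's extra step: using that $S$ projects radially and homeomorphically onto $\Delta$, pick $\tilde x\in S\cap (C_I)_{++}$ close enough to $x$ that $\tilde x\ll_{I} y$, and contradict weak unorderedness with $\tilde x$ instead of $x$. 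As written, your argument for the AS3$'$ conclusion does not cover this case.
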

\begin{proof}
  Let $S \in \widehat{\mathcal{U}}$.  We prove first that $F(S)$ is weakly unordered.  Indeed, suppose there are $\emptyset \ne I \subset \{1, \ldots, d\}$ and $x, y \in S \cap (C_I)_{++}$ such that $F(x) \ll_I F(y)$.  Then, by weak retrotonicity (\ref{AS-weak-retro}), $x \ll_{I} y$, which is impossible.

  Since $F$ is continuous on $C_+$ and $S  \subset \Lambda$ is compact, $F(S)$ is compact, and, by Lemma~\ref{lem3}(a), $F(S)$ is a subset of $\Lambda'$.  Define $\hat{F}$ on $\Lambda'$ as $\hat{F} = F/\normone{F}$, in other words, $\hat{F} = T \circ F$. Then $\hat{F}$ is a continuous map on $\Lambda'$, and consequently the continuous map $\hat{F}{\restriction}_{S} \colon S \to \Delta$ is proper.  By Lemma~\ref{lm:AS-homeo}, $F$ is invertible, and $T{\restriction}_{F(S)}$ is locally invertible at each element of $F(S)$, since otherwise $F(S)$ would not be weakly unordered, which has been excluded in the previous paragraph.  Hence $\hat{F}{\restriction}_{S}$ is locally invertible at each $x \in S$, and we can apply \cite[Cor.~2.3.6]{ChHa} to conclude that $\hat{F}{\restriction}_{S}$ is a homeomorphism onto $\Delta$.  Therefore, $F(S) \in \widehat{\mathcal{U}}$.

  Let $S \in \mathcal{U}$.  If there are $x, y \in S$ such that $F(x) < F(y)$ then, by weak retrotonicity (\ref{AS-weak-retro}), $x < y$, which contradicts the unorderedness of $S$.

  Assume that \ref{AS4-strong} or \ref{AS-3-strong} holds, and let $S \in \widehat{\mathcal{U}}$. Suppose to the contrary that $F(S)$ is not unordered, that is, there are $x, y \in S$ such that $F(x) < F(y)$.

 In the case of \ref{AS4-strong} it follows from retrotonicity (\ref{AS-retro}) that $x_i < y_i$ for all $i \in I(y) = I(F(y))$, which is in contradiction to Lemma~\ref{lm:aux-U}.

  We consider now the case of~\ref{AS-3-strong}.  We already know that $F(S)$ is weakly unordered, so, as a consequence of Lemma~\ref{lm:aux-U}, $I(F(y))$ is the disjoint union of two nonempty sets, $J := \{\, i \in I(F(y)) : 0 < F_i(x) = F_i(y) \,\}$ and $K := \{\, i \in I(F(y)) : F_i(x) < F_i(y) \,\}$.  By weak retrotonicity (\ref{AS-weak-retro}), $x_i \le y_i$ for all $i \in I(y) = I(F(y))$, with $x_i <  y_i$ for all $i \in K$.  Applying again Lemma~\ref{lm:aux-U}, this time to $S$, we obtain that $x_j = y_j$ for at~least one $j \in J$.  Fix such a $j$.  But \ref{AS-3-strong} gives us $f_j(x) > f_j(y)$, hence $F_j(x) = x_j f_j(x) > y_j f_j(y) = F_j(y)$, which contradicts the fact that $j \in J$.
\end{proof}

\medskip

We now introduce a partial order relation for the hypersurfaces in $\widehat{\mathcal{U}}$.  For $S, S' \in \widehat{\mathcal{U}}$, let $R, R'$ denote their respective radial representations.  We write $S \preccurlyeq S'$ if $R(u) \le R'(u)$ for all $u \in \Delta$, $S \prec S'$ if $S \preccurlyeq S'$ and $S \ne S'$, and $S \llcurly S'$ if $R(u) < R'(u)$ for all $u \in \Delta$.  It is straightforward that $S \prec S'$ if and only if $R(u) \le R'(u)$ for all $u \in \Delta$ and there is $v \in \Delta$ with $R(v) < R'(v)$ and observe that $S \preccurlyeq S'$ if and only if $S \subset S' \cup (S')_{-}$ (or, which is the same, $S' \subset S \cup S_{+}$).

We assume the convention that $0 \llcurly S$ for any $S \in \widehat{\mathcal{U}}$.

For $S \preccurlyeq S'$ we denote
\begin{equation*}
    \langle S, S' \rangle := [1, \infty) S \cap [0, 1] S',
\end{equation*}
and for $S \llcurly S'$ we denote
\begin{equation*}
    \llangle S, S' \rrangle := (1, \infty) S \cap [0, 1) S'.
\end{equation*}
In other words, $\langle S, S' \rangle = \{\,\lambda u: \lambda \in [R(u), R'(u)], u \in \Delta\,\}$, and  $\llangle S, S' \rrangle = \{\,\lambda u: \lambda \in (R(u), R'(u)), u \in \Delta\,\}$.  In~particular, $\langle 0, S \rangle = \{\,\lambda u: 0 \le \lambda \le R(u), u \in \Delta\,\} = [0, 1] S$.

Notice that $S \preccurlyeq S'$ if and only if $S \subset \langle 0, S' \rangle$.

\medskip

The following lemma shows that the volume $\langle S, S' \rangle$ between two hypersurfaces in $\widehat{\mathcal{U}}$ is the union of order intervals and hence is order convex.
\begin{lemma}
\label{lm:between}
    For $S, S' \in \widehat{\mathcal{U}}$ with $S \preccurlyeq S'$,
    \begin{equation*}
        \langle S, S' \rangle = \bigcup_{\substack{x \in S \\ y \in S'}} [x, y]
    \end{equation*}
  and  is order convex.
\end{lemma}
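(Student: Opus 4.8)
The plan is to establish the two inclusions separately, writing $R, R'$ for the radial representations of $S, S'$, so that $(T|_S)^{-1}(u) = R(u)u$ and $(T|_{S'})^{-1}(u) = R'(u)u$, and recalling that $R(u) \le R'(u)$ for all $u \in \Delta$ since $S \preccurlyeq S'$ (in the union on the right we of course keep only the pairs $x \in S$, $y \in S'$ with $x \le y$, an order interval being defined only then). For ``$\subseteq$'', take $z \in \langle S, S' \rangle$; by the description $\langle S, S' \rangle = \{\lambda u : \lambda \in [R(u), R'(u)],\ u \in \Delta\}$ we may write $z = \lambda u$ with $u := T(z) \in \Delta$ and $R(u) \le \lambda \le R'(u)$. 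Setting $x := R(u)u \in S$ and $y := R'(u)u \in S'$ we get $x \le y$, and $x \le z \le y$ because $x, z, y$ are the multiples of the nonnegative vector $u$ by the scalars $R(u) \le \lambda \le R'(u)$; hence $z \in [x, y]$, and this inclusion is done.

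For ``$\supseteq$'' the real content is an order property of a single weakly unordered radial graph, which I would isolate as follows. For $S_0 \in \widehat{\mathcal{U}}$ with radial representation $R_0$ one has $[1, \infty)S_0 = \{\lambda u : u \in \Delta,\ \lambda \ge R_0(u)\}$ and $[0, 1]S_0 = \{\lambda u : u \in \Delta,\ 0 \le \lambda \le R_0(u)\}$; the claim is that \textup{(i)} $w \in [1, \infty)S_0$ whenever $x \in S_0$ and $w \in C_{+}$ with $w \ge x$, and \textup{(ii)} $w \in [0, 1]S_0$ whenever $y \in S_0$ and $w \in C_{+} \setminus \{0\}$ with $w \le y$. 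Granting this, ``$\supseteq$'' is immediate: given $x \in S$, $y \in S'$ with $x \le y$ and $z \in [x, y]$ we have $z \ne 0$ (since $z \ge x$ and $x \ne 0$, as $S$ sits at positive distance from the origin), so \textup{(i)} applied to $S$ gives $z \in [1, \infty)S$ and \textup{(ii)} applied to $S'$ gives $z \in [0, 1]S'$, whence $z \in [1, \infty)S \cap [0, 1]S' = \langle S, S' \rangle$.

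To prove \textup{(i)} I would argue by contradiction: if $w \notin [1, \infty)S_0$ then, since $w \ne 0$, necessarily $w \in (0, 1)S_0$, so $w = \mu w'$ with $w' \in S_0$ and $\mu \in (0, 1)$, giving $I(w) = I(w')$ and $w \ll_{I(w')} w'$; combined with $x \le w$ this yields $I(x) \subseteq I(w')$ and $x_i < w'_i$ for all $i \in I(w')$. If $I(x) = I(w')$ then $x, w'$ are distinct points of $S_0 \cap (C_{I(w')})_{++}$ with $x \ll_{I(w')} w'$, contradicting weak unorderedness. If $I(x) \subsetneq I(w')$ I would perturb: since $T(x)$ lies in the closure of $(C_{I(w')})_{++} \cap \Delta$, pick $\tilde u \in (C_{I(w')})_{++} \cap \Delta$ close to $T(x)$ and put $\tilde x := R_0(\tilde u)\tilde u \in S_0 \cap (C_{I(w')})_{++}$; by continuity of $R_0$ and the strict inequalities $x_i < w'_i$ for $i \in I(x)$ together with $0 = x_i < w'_i$ for $i \in I(w') \setminus I(x)$, for $\tilde u$ close enough $\tilde x \ll_{I(w')} w'$ with $\tilde x \ne w'$, again contradicting weak unorderedness. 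Part \textup{(ii)} is symmetric, now with $w = \mu w'$, $w' \in S_0$, $\mu > 1$, so that $w' \ll_{I(w')} w \le y$, followed by the same dichotomy on $I(w')$ versus $I(y)$ (with a perturbation of $w'$ in the proper-inclusion case).

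I expect the crux to be precisely this auxiliary claim, and within it the subcase where the support of the lower point is a proper subset of that of the upper point: there the direct comparison yields an ordering only on a sub-face, and one must use that $S_0$ is a radial graph over $\Delta$ to slide to a nearby point of $S_0$ in the larger relatively open face while keeping the strict inequalities. It should also be noted that weak unorderedness is genuinely used here --- a radial graph failing it could dip strictly below a ray on which it also carries a point sitting under some $y \in S_0$, making \textup{(ii)} false --- so the argument cannot avoid invoking Definition~\ref{def:unordered}.
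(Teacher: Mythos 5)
Your proof is correct and follows essentially the same route as the paper: the easy inclusion via the radial representations, and the reverse inclusion by showing $z$ lies above $S$ and below $S'$ radially, where your claims (i) and (ii) are exactly the paper's claims $\alpha \ge 1$ and $\beta \le 1$ for $z = \alpha\xi = \beta\eta$, proved by the same contradiction with weak unorderedness, including the support dichotomy and the perturbation to a nearby point of the radial graph in the relatively open face.
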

\begin{proof}
  The `$\subset$' inclusion is straightforward.  Assume that $z \in [x, y]$ with $x \in S$ and $y \in S'$.  There are $\xi \in S$ and $\eta \in S'$ such that $z = \alpha \xi = \beta \eta$.  We claim that $\alpha \ge 1$ and $\beta \le 1$.  Indeed, suppose that $\alpha < 1$.  Then $x \le z \ll_{I} \xi$, where $I := I(z) = I(\xi)$, with both $x$ and $\xi$ in $S$.  If $I(x) = I$ then $x \ll_{I} \xi$, which contradicts the weak unorderedness of $S$.  If not, we can find $\tilde{x} \in S \cap (C_{++})_I$ so close to $x$ that $\tilde{x} \ll_{I} \xi$, which again contradicts the weak unorderedness of $S$.  The case $\beta > 1$ is excluded in much the same way. We have thus $z \in [R(T(z)), R'(T(z))] T(z)$, with $\xi = R(T(z)) T(z)$ and $\eta = R'(T(z)) T(z)$. Finally, as a consequence of the equality, $\langle S, S' \rangle$ is order convex.
\end{proof}

The property described in the result below has appeared in the literature, see, e.g., \cite[Prop.~2.1]{J-M-W}.  As the assumptions of $F$ made in the present paper are different (e.g. weak retrotonicity), we have decided to give its reasonably complete proof.

\begin{lemma}
   \label{lm:AS4-conseq}
  Assume~\ref{AS-C^0}--\ref{AS4}.  Then for any $x, y \in \Lambda$, if $F(x) \leq F(y)$, then
  \begin{enumerate}
      \item[\textup{(i)}]
      $x \leq y$,
      \item[\textup{(ii)}]
      $[0, F(y)] \subset F([0, y])$,
      \item[\textup{(iii)}]
      $[F(x),F(y)] \subset F([x,y])$.
  \end{enumerate}
\end{lemma}
\begin{proof}
Suppose that $F(x)\leq F(y)$. If $F(x) = F(y)$, then as, by Lemma~\ref{lm:AS-homeo}, $F$ is a homeomorphism of $\Lambda$ onto its image, $x = y$. This leaves the case $F(x) < F(y)$, when the statement (i) follows from \ref{AS-weak-retro}.

Regarding (ii), the conclusion is obvious if $y = 0$.  Assume $y > 0$ with support $I = I(y)$.  Then, by Lemma~\ref{cUintocU} for $\widehat{\mathcal{U}}$ restricted to $(C_I)_{+}$,
\begin{equation*}
    S = \bd_{(C_{I})_{+}}[0, y] \in \widehat{\mathcal{U}}\!\!\restriction_{(C_{I})_{+}} \quad \text{and} \quad F(S) = F(\bd_{(C_{I})_{+}}[0, y]) \in \widehat{\mathcal{U}}\!\!\restriction_{(C_{I})_{+}}.
\end{equation*}
$F\!\!\restriction_{\Lambda}$, being a homeomorphism of $\Lambda$ onto $F(\Lambda)$, takes $\bd_{\Lambda_I} [0, y] = \bd_{(C_I)_{+}} [0, y] = S$ onto $\bd_{\Lambda_I} F([0, y]) = \bd_{(C_I)_{+}} F([0, y]) = F(S)$.  So $F([0, y])$ is a compact set contained in $(C_I)_{+}$ whose relative boundary in $(C_I)_{+}$ equals $F(S)$.  Further, $[0, y]$ is a $(\card{I})$\nobreakdash-\hspace{0pt}dimensional topological disk whose manifold boundary, that is $([0, y] \cap \partial (C_I)_{+}) \cup S$, separates $\RR^d_{I}$ into two components, one of them (the bounded one) being just $[0, y]$ (the Jordan--Brouwer separation theorem, see, e.g. \cite[Thm.~4.7.15 on p.~198]{Spanier}).  The restriction $F\!\!\restriction_{[0, y]}$, being a homeomorphism onto its image, preserves the manifold boundary.  It follows from~\ref{AS-C^0} that $F(([0, y] \cap \partial (C_I)_{+}) \cup S) = (F([0, y]) \cap \partial (C_I)_{+}) \cup F(S)$.  The manifold boundary of $F([0, y])$ separates $\RR^d_{I}$ into two components, the bounded one being just $F([0, y])$.  But that component is just equal to $\langle 0, F(S) \rangle$.  For more on manifolds, their boundaries, etc., see \cite[pp.~24--27]{Lee}.  Since $F(y) \in F(S)$, Lemma~\ref{lm:between} shows $[0,F(y)] \subset \langle 0, F(S) \rangle = F([0,y])$, as required.

    Now we show that for $x,y\in \Lambda$ such that $F(x) \leq F(y)$ we have $[F(x),F(y)] \subset F([x,y])$. If $F(x) = F(y)$ then as $F$ is a homeomorphism $x = y$ and the inequality holds trivially. Thus suppose $F(x) < F(y)$. Let $\zeta \in [F(x), F(y)]$ so that
    $F(x) \leq \zeta \leq F(y)$. In particular $0 \leq \zeta \leq F(y)$ so that $\zeta \in [0,F(y)]$. By the previous paragraph, $\zeta \in [0,F(y)] \subset F([0,y])$. Thus there exists $z \in [0,y]$ such that $F(z) = \zeta$, and then $F(x)\leq F(z) \le F(y)$, which by weak retrotonicity gives $x \leq z \leq y$, i.e. $z \in [x,y]$.
\end{proof}

\begin{lemma}
\label{lm:aux3}
    For $S \in \widehat{\mathcal{U}}$, $F(\langle 0, S \rangle) = \langle 0, F(S) \rangle$.
\end{lemma}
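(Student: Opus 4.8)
The plan is to prove the two inclusions $\langle 0, F(S)\rangle \subseteq F(\langle 0, S\rangle)$ and $F(\langle 0, S\rangle) \subseteq \langle 0, F(S)\rangle$ separately. Throughout I would use that $F(S) \in \widehat{\mathcal{U}}$ by Lemma~\ref{cUintocU}, so $F(S)$ has a radial representation $\bar{R}$ and $C_{+}$ splits into $(F(S))_{-} = [0,1)F(S)$, $F(S)$, and $(F(S))_{+} = (1,\infty)F(S)$; recall also $\langle 0, S\rangle = [0,1]S$, $\langle 0, F(S)\rangle = [0,1]F(S)$, with $R$ the radial representation of $S$. I would first isolate an auxiliary fact, essentially the degenerate ($x=0$) case of Lemma~\ref{lm:between}: for any $W \in \widehat{\mathcal{U}}$ with radial representation $R_{W}$ and any $w \in W$, one has $[0, w] \subseteq \langle 0, W\rangle$. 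To see it, take $z \in [0,w]$, $z \ne 0$, and let $\eta := R_{W}(T(z))\,T(z)$ be the point of $W$ on the ray through $z$; if $z \not\le \eta$ then, $z$ and $\eta$ being positive multiples of the same direction, $\eta \ll_{I(z)} z$, whence $\eta \ll_{I(z)} z \le w$. When $I(w) = I(z)$ this gives $\eta \ll_{I(w)} w$ with $\eta, w \in W \cap (C_{I(w)})_{++}$, contradicting the weak unorderedness of $W$; when $I(z) \subsetneq I(w)$ one first replaces $\eta$ by a point $\tilde{\eta} \in W \cap (C_{I(w)})_{++}$ close enough to $\eta$ (available since $T|_{W} \colon W \to \Delta$ is a homeomorphism and $T(\eta)$ lies in the relative interior of the face $\Delta_{I(z)} \subset \Delta_{I(w)}$) so that $\tilde{\eta} \ll_{I(w)} w$, again contradicting weak unorderedness. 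Hence $z \le \eta$, i.e. $z \in \langle 0, W\rangle$.

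For the inclusion $\langle 0, F(S)\rangle \subseteq F(\langle 0, S\rangle)$: given $w \in \langle 0, F(S)\rangle = [0,1]F(S)$ (the case $w = 0 = F(0)$ being trivial), write $w \le y'$ with $y' := \bar{R}(T(w))\,T(w) \in F(S)$, and pick $x \in S$ with $F(x) = y'$. Proposition~\ref{lem1}(1) gives $w \in [0, F(x)] \subseteq F([0, x])$, so $w = F(z)$ for some $z \in [0, x]$; by the auxiliary fact $z \in \langle 0, S\rangle$, hence $w \in F(\langle 0, S\rangle)$.

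For the inclusion $F(\langle 0, S\rangle) \subseteq \langle 0, F(S)\rangle$: take $z \in \langle 0, S\rangle$, $z \ne 0$, put $I := I(z)$, and note $I(F(z)) = I$ by Remark~\ref{rm:invariance}. Suppose, for contradiction, $F(z) \notin \langle 0, F(S)\rangle$; since $F(S) \in \widehat{\mathcal{U}}$ this forces $F(z) \in (1,\infty)F(S)$, i.e. $F(z) = \lambda F(\tilde{x})$ for some $\tilde{x} \in S$ and $\lambda > 1$ with $F(\tilde{x})$ on the ray through $F(z)$. Then $I(\tilde{x}) = I(F(\tilde{x})) = I$ and $F(\tilde{x}) \ll_{I} F(z)$, that is $F(\tilde{x}) < F(z)$ with $F_{i}(\tilde{x}) < F_{i}(z)$ for all $i \in I$. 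Since $\tilde{x}, z \in \Lambda$ and $F$ is weakly retrotone in $\Lambda$ by Proposition~\ref{lem1}(2), we get $\tilde{x} \ll_{I} z$. On the other hand $z \in \langle 0, S\rangle$ gives $z \le x := R(T(z))\,T(z) \in S$ with $I(x) = I$, so $\tilde{x} \ll_{I} z \le x$ yields $\tilde{x} \ll_{I} x$ with $\tilde{x}, x \in S \cap (C_{I})_{++}$ distinct, contradicting the weak unorderedness of $S$. Thus $F(z) \in \langle 0, F(S)\rangle$, and combining the two inclusions gives the claim.

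I expect the main obstacle to be the auxiliary fact and, more generally, the bookkeeping with supports: the ray through a point of $\langle 0, S\rangle$ (or through $F(z)$) may meet a lower-dimensional face of $C_{+}$, and one must then exploit that $T|_{S} \colon S \to \Delta$ is a homeomorphism to approximate by points of $S$ with larger support — precisely the device already used in Lemmas~\ref{lm:between} and~\ref{cUintocU}. Everything else is a direct application of Proposition~\ref{lem1}(1) (for $\supseteq$) and of weak retrotonicity together with the weak unorderedness of $S$ (for $\subseteq$).
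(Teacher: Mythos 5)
Your proof is correct and follows essentially the same route as the paper: the inclusion $\langle 0, F(S)\rangle \subseteq F(\langle 0, S\rangle)$ via Proposition~\ref{lem1}(1) together with the (degenerate, $x=0$ case of the) description $\langle 0, S\rangle=\bigcup_{x\in S}[0,x]$ from Lemma~\ref{lm:between}, and the reverse inclusion by contradiction using weak retrotonicity (Proposition~\ref{lem1}(2)) plus the weak unorderedness of $S$ with the same support-approximation device. The only cosmetic difference is that you re-prove the auxiliary fact pointwise where the paper simply cites Lemma~\ref{lm:between}.
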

\begin{proof}
  \begin{equation*}
       \begin{aligned}
        \langle 0, F(S) \rangle & = \bigcup_{y \in F(S)} [0, y] & \text{ (by Lemma~\ref{lm:between})}
        \\
        & = \bigcup_{x \in S} [0, F(x)]
        \\
        & \subset \bigcup_{x \in S} F([0, x]) & \text{ (by Lemma~\ref{lm:AS4-conseq})}
        \\
        & = F\Bigl(\bigcup_{x \in S} [0, x]\Bigr)
        \\
        & = F(\langle 0, S \rangle) & \text{ (by Lemma~\ref{lm:between})}.
      \end{aligned}
  \end{equation*}
   Suppose to the contrary that there is $z \in F(\langle 0, S \rangle) \setminus \langle 0, F(S) \rangle$.  Let $y$ be the unique member of $F(S)$ such that $y = {\alpha} z$ with $\alpha > 0$.  By our choice of $z$, we have $\alpha < 1$ and $y \ll_{I} z$, where $I := I(z) = I(y)$.  Let $\eta \in S$ and $\zeta \in \langle 0, S \rangle$ be such that $F(\eta) = y$ and $F(\zeta) = z$.  Weak retrotonicity (\ref{AS-weak-retro}) yields $\eta \ll_{I} \zeta$.  Let $x$ be the unique member of $S$ such that $x = {\beta} \zeta$ with $\beta \ge 1$.  Then $x \in S \cap (C_I)_{++}$, and either $\zeta = x$ or $\zeta \ll_{I} x$.  At any rate, $\eta \ll_I x$, which contradicts the weak unorderedness of $S$.
\end{proof}

\begin{lemma}
\label{lm:order-preserving-U}
   $F$ preserves the $\preccurlyeq$ and $\llcurly$ relations on $\widehat{\mathcal{U}}$.
\end{lemma}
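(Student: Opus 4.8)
The plan is to reduce both assertions to Lemma~\ref{lm:aux3} together with the characterization ``$S \preccurlyeq S' \iff S \subset \langle 0, S' \rangle$'' recorded just before this lemma; the univalence of $F|_{\Lambda}$ (Lemma~\ref{lm:homeomorphism}(i)) will handle the strict relation $\llcurly$. Throughout, note that by Lemma~\ref{cUintocU} the images $F(S), F(S')$ again lie in $\widehat{\mathcal{U}}$, so the relations $F(S) \preccurlyeq F(S')$ and $F(S) \llcurly F(S')$ are meaningful, and it is enough to verify the corresponding inclusions of radial representations.

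For $\preccurlyeq$: assume $S \preccurlyeq S'$. Then $S \subset \langle 0, S' \rangle$, hence $F(S) \subset F(\langle 0, S' \rangle)$, and by Lemma~\ref{lm:aux3} the right-hand side equals $\langle 0, F(S') \rangle$. Thus $F(S) \subset \langle 0, F(S') \rangle$, which is exactly $F(S) \preccurlyeq F(S')$. This step is essentially immediate once Lemma~\ref{lm:aux3} is in hand.

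For $\llcurly$: assume $S \llcurly S'$, with radial representations satisfying $R < R'$ on $\Delta$. In particular $S \preccurlyeq S'$, so by the previous paragraph $F(S) \preccurlyeq F(S')$; write $\widetilde{R} \le \widetilde{R}'$ for the radial representations of $F(S), F(S')$. If strict inequality failed at some $u_0 \in \Delta$, then $\widetilde{R}(u_0) = \widetilde{R}'(u_0)$, so the point $w := \widetilde{R}(u_0)\, u_0$ lies in $F(S) \cap F(S')$. Writing $w = F(x) = F(x')$ with $x \in S$, $x' \in S'$ and using that $F|_{\Lambda}$ is injective gives $x = x' \in S \cap S'$; but a common point of $S$ and $S'$ would force $R(T(x)) = R'(T(x)) = \normone{x}$, contradicting $R < R'$. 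Hence $\widetilde{R}(u) < \widetilde{R}'(u)$ for every $u \in \Delta$, i.e.\ $F(S) \llcurly F(S')$.

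I do not expect a genuine obstacle here, since the real content has already been placed in Lemma~\ref{lm:aux3}. The one point deserving a line of care is that $\llcurly$ is a strict inequality required at \emph{every} point of $\Delta$, so one must exclude the single failure mode in which the two image hypersurfaces touch along a ray; as $F(S) \preccurlyeq F(S')$ already rules out $\widetilde{R}(u_0) > \widetilde{R}'(u_0)$, the only remaining possibility is equality at some $u_0$, and this is eliminated by the univalence of $F$ on $\Lambda$.
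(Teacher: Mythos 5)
Your proof is correct and follows essentially the same route as the paper: the $\preccurlyeq$ part is the identical application of Lemma~\ref{lm:aux3} to the inclusion $S \subset \langle 0, S' \rangle$, and your treatment of $\llcurly$ (ruling out a touching ray via injectivity of $F|_{\Lambda}$) is just the contrapositive phrasing of the paper's observation that $S \cap S' = \emptyset$ forces $F(S) \cap F(S') = \emptyset$.
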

\begin{proof}
  Let $S, S' \in \widehat{\mathcal{U}}$, $S \preccurlyeq S'$, which means that $S \subset \langle 0, S' \rangle$.  Then, by Lemma~\ref{lm:aux3}, $F(S) \subset F(\langle 0, S' \rangle) = \langle 0, F(S') \rangle$, that is, $F(S) \preccurlyeq F(S')$.

  Assume that $S \llcurly S'$.  By the previous paragraph, $F(S) \preccurlyeq F(S')$, and by the fact that $F{\restriction}_{\Lambda}$ is a homeomorphism onto its image, $F(S)$ and $F(S')$ are disjoint, consequently $F(S) \llcurly F(S')$.
\end{proof}

\medskip
For any $i \in \{1, \ldots, d\,\}$, by \ref{AS-e}, $f_i(\ev_i) = 1$, and $0 = F(0) < F(\ev_i) = \ev_i$ with $0_i < (\ev_i)_i$, so, by \ref{AS-3} or \ref{AS3-mod}, $f_i(0) > 1$.  Let $\varepsilon \in (0, 1)$ be small enough that $\min\limits_{1 \le i \le d}f_i(x) > 1$ for all $x \in \langle 0, S_0 \rangle$, where $S_0 :=\varepsilon \Delta$.  $S_0 \subset \Lambda$, is unordered and projects homeomorphically on $\Delta$, so it belongs to $\mathcal{U}$.

\begin{proposition}
\label{prop:0-repeller}
    $\{0\}$ is a uniform repeller for $(F^n)_{n = 0}^{\infty}$.
\end{proposition}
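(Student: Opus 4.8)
The plan is to exhibit a fixed small neighbourhood of $0$ from which every nonzero point is eventually ejected, and moreover stays ejected, using the choice of $S_0 = \varepsilon\Delta$ on which $\min_i f_i > 1$. Concretely, I would take $U := \langle 0, S_0\rangle = [0,1]S_0 = [0,\varepsilon]\Delta$ (the order interval scaled copy of the simplex), and show two things: first, that $F$ is ``radially expansive'' on $U\setminus\{0\}$ in the $\ell_1$-sense, so that any nonzero orbit must leave $U$; second, that once an orbit leaves $U$ it cannot return. The first point follows from the Kolmogorov form: for $x\in U\setminus\{0\}$ one has $F_i(x) = x_i f_i(x)$, and since $x \in [0,\varepsilon]\Delta$ implies $\normone{x}\le \varepsilon$, hence $x$ lies in the set where $\min_i f_i > 1$; therefore $\normone{F(x)} = \sum_i x_i f_i(x) \ge (\min_i f_i(x))\normone{x} > \normone{x}$. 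So $\normone{\cdot}$ strictly increases along orbits in $U\setminus\{0\}$.

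Next I would upgrade this to \emph{uniform} expansion on a slightly smaller shell and conclude ejection: on the compact set $\{x : \delta \le \normone{x} \le \varepsilon\}$ for any fixed $\delta\in(0,\varepsilon)$, the continuous function $\min_i f_i$ attains a minimum $\mu_\delta > 1$, so $\normone{F(x)} \ge \mu_\delta \normone{x}$ there. Hence an orbit starting in $U$ with $\normone{x} \ge \delta$ has $\normone{F^n(x)} \ge \mu_\delta^n \delta$ as long as it remains in $U$ — so after finitely many steps it has $\normone{F^n(x)} > \varepsilon$, i.e.\ it leaves $U$. For an orbit starting with $0 < \normone{x} < \delta$: since $\normone{F^n(x)}$ is strictly increasing while in $U$, once it exceeds $\delta$ the previous argument applies; and it must exceed $\delta$ eventually because the increments $\normone{F^{n+1}(x)} - \normone{F^n(x)} = \sum_i x_i^{(n)}(f_i(x^{(n)}) - 1)$ are bounded below on $\{\normone{\cdot}\le\delta\}$ by a positive constant times $\normone{x^{(n)}}$, and a monotone sequence bounded above by $\delta$ that cannot converge to any interior value forces a contradiction unless it escapes (alternatively: $0$ is the only fixed point of $F$ in $U$, since a fixed point would need $f_i(x)=1$ whenever $x_i>0$, contradicting $\min_i f_i>1$ on $U$). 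Either way, $F^n(x) \notin U$ for all large $n$.

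To get the repeller statement as formulated, I would take $B = \{0\}$ and the neighbourhood $U = [0,\varepsilon]\Delta$ of $0$ in $C_+$, and invoke the equivalent characterization quoted just before Definition~\ref{def_CS} of ``uniform repeller'': it suffices that for each $x \in U\setminus\{0\}$ there is $n_0$ with $F^n(x)\notin U$ for all $n\ge n_0$. The ``stays out'' part — that $F^n(x)$ never re-enters $U$ after leaving — also follows from monotonicity of $\normone{\cdot}$: I would need to check that the increasing-norm property propagates, which requires knowing that orbit points just outside $U$ still have $f_i > 1$ until $\normone{\cdot}$ grows past a definite threshold; cleaner is to note that $\normone{F^n(x)}$ is nondecreasing on the \emph{whole} forward orbit until it first exceeds $\varepsilon$, and after that the monotonicity argument on $[0,\varepsilon]\Delta$ shows it cannot drop back below $\varepsilon$ in one step — but this last claim is exactly where care is needed, since $F$ can decrease total density when densities are large.

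The main obstacle I anticipate is precisely this ``no return'' step: strict radial expansion only holds on $U$ itself, so one must argue that an orbit leaving $U$ does not later wander back in. I would handle this by instead using the equivalent (second) formulation of uniform repeller, which only demands $F^n(x) \notin U$ for all $n\ge n_0$, and proving it via a clean contradiction: if some orbit returned to $U$ infinitely often, then combining with the monotonicity of $\normone{\cdot}$ \emph{inside} $U$ and the fact that each excursion must strictly increase $\normone{\cdot}$ at the re-entry point (or simply using that $\{0\}$ is the only invariant subset of $U$, together with Proposition~\ref{prop:attractor-characterization}-type reasoning on $\Gamma$), one derives a contradiction with $\min_i f_i > 1$ on $U$. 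A slicker route, if available from the preceding material, is to observe that $\Gamma \cap U = \{0\}$: indeed any $x\in\Gamma\cap U$ with $x\ne 0$ has a bounded full orbit, but the backward orbit would have $\normone{\cdot}$ strictly decreasing while in $U$ and hence converging to $0$ monotonically — and then show, via $\normone{F(y)} \ge \mu\,\normone{y}$ near $0$, that such a backward orbit cannot exist unless $x=0$; combined with the standard equivalence (repeller $\Leftrightarrow$ $0$ isolated invariant with empty intersection with $\Gamma$ away from itself, as in~\cite{S-T}), this yields the proposition.
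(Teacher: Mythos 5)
Your escape argument (uniform $\ell_1$-expansion on $U=\langle 0,S_0\rangle$, coming from $\min_i f_i>1$ near the origin) reproduces the first half of the paper's proof and is essentially correct, modulo the small point that $\min_i f_i>1$ is assumed on $S_0=\varepsilon\Delta$ and must be propagated to all of $U$ via AS3 (monotonicity of $f$), which also yields the uniform bound $f_i\ge 1+\delta$ on $U$ directly, making your shell construction unnecessary. The genuine gap is the no-return step, which you correctly identify but do not close. Your route (a), ``each excursion must strictly increase $\normone{\cdot}$ at the re-entry point,'' is asserted rather than proved: nothing you establish controls the orbit while it is outside $U$, where $F$ can decrease $\normone{\cdot}$. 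Your route (b) rests on a false claim: $\Gamma\cap U=\{0\}$ does not hold. Indeed $\Gamma$ contains points arbitrarily close to $0$ — for instance every $t\ev_i$ with $t\in[0,1]$ has a bounded full orbit, since $G_i^n$ maps $[0,1]$ onto $[0,1]$ homeomorphically (Lemma~\ref{lm:G_i-image}), so $t\ev_i\in\Gamma$ by Theorem~\ref{thm:global-attractor-exists}(ii); in fact $\Gamma=\langle 0,\Sigma\rangle\supset U$ (Theorem~\ref{thm:omega-Lambda}). Backward orbits tending to $0$ are not a contradiction here — they are exactly what characterizes $\Gamma\setminus\Sigma$ — and the ``equivalence'' you invoke (uniform repeller iff $\{0\}$ isolated in $\Gamma$) is not a theorem of~\cite{S-T}; in the present situation $\{0\}$ is a uniform repeller yet is not isolated in $\Gamma$. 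Likewise, knowing that $\{0\}$ is the maximal invariant subset of $U$ does not by itself rule out orbits re-entering $U$ infinitely often; upgrading to uniform repulsion would require the persistence machinery of~\cite{S-T} (isolation plus $W^s(\{0\})=\{0\}$ plus asymptotic compactness), which you gesture at but do not actually deploy.

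The missing idea, and the way the paper finishes, is a backward-invariance property of $U$ relative to $\Lambda$: by the choice of $\varepsilon$ one has $S_0\llcurly S_1=F(S_0)$, so $U=\langle 0,S_0\rangle\subset\langle 0,S_1\rangle=F(\langle 0,S_0\rangle)=F(U)$ by Lemma~\ref{lm:aux3}; moreover the forward orbit of any $x\in U$ stays in $\Lambda$ (Lemma~\ref{lem3}(i)) and $F|_{\Lambda}$ is injective (Lemma~\ref{lm:homeomorphism}(i)). Hence if $n_1>n_0$ were the first index with $F^{n_1}(x)\in U$, then $F^{n_1}(x)\in U\subset F(U)$ would have its unique $F|_{\Lambda}$-preimage in $U$, forcing $F^{n_1-1}(x)\in U$ and contradicting either the minimality of $n_1$ or $F^{n_0}(x)\notin U$. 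Without this argument (or a genuine substitute such as the acyclicity/persistence theorem applied with the isolation and $W^s(\{0\})=\{0\}$ facts verified), your proposal does not establish the uniform repeller property.
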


\begin{proof}
  (Recall that  we are not assuming that $F$ is $C^1$). As $U$ in the definition of uniform repeller we take $\langle 0, S_0 \rangle$.  It follows from the choice of $S_0$ that there is $\delta > 0$ such that $f_i(x) \ge 1 + \delta$, $i \in \{1, \ldots, d\}$, consequently $\normone{F(x)} \ge (1 + \delta) \normone{x}$, for all $x \in U$.   So, if $x \in U \setminus \{0\}$ we have that there exists $n_0 \in \NN$ with the property that $F^{n_0}(x) \notin U$ ($n_0$ is not larger than the least nonnegative integer $n$ such that $(1 + \delta)^n \normone{x} \ge \varepsilon$, where $\varepsilon > 0$ is as in the definition of $S_0$).  Since $F{\restriction}_{\Lambda}$ is a homeomorphism onto its image contained in $\Lambda$, there holds $F^n(x) \notin U$ for all $n \ge n_0$.
\end{proof}

\smallskip
The sequence $S_n := F^n(S_0) \in \mathcal{U}$ by Lemma \ref{cUintocU}.  Denote by $R_n \colon \Delta \to (0, \infty)$ the radial representation of $S_n$.  By our choice of $S_0$, we have $S_0 \llcurly S_1$.  Lemma~\ref{lm:order-preserving-U} implies $S_n \llcurly S_{n + 1}$ for all $n = 1, 2, \dots$.  As a consequence, for each $u \in \Delta$ the sequence $(R_n(u))_{n = 0}^{\infty}$ is strictly increasing.  Let $R_*$ stand for its pointwise limit.

We define
\begin{equation*}
    S_{*} := \{\, R_{*}(u) u : u \in \Delta \,\}.
\end{equation*}
We recall the definition of Kuratowski limit.
\begin{definition}
\label{def:Kuratowski}
    $\widetilde{S}$ is the \emph{Kuratowski limit} of the sequence $(S_n)_{n = 0}^{\infty}$ if the following conditions are satisfied:
    \begin{enumerate}
    \item[\textup{(K1)}]
    For each $x \in \widetilde{S}$ there is a sequence $(x_{n})_{n = 0}^{\infty}$ such that $x_n \in S_n$ and $\lim\limits_{n \to \infty} x_n = x$.
    \item[\textup{(K2)}]
    For any sequence $(x_{n_k})_{k = 1}^{\infty}$ with $x_{n_k} \in S_{n_k}$ and $n_k \underset{k \to \infty}{\longrightarrow} \infty$, if $\lim\limits_{k \to \infty} x_{n_k} = x$ then $x \in \widetilde{S}$.
    \end{enumerate}
\end{definition}
See~\cite[Def.~4.4.13]{AT}.
Recall that $d_{H}$ stands for the Hausdorff metric on the family $2^{\Lambda}$ of nonempty compact subsets of $\Lambda$.  $(2^{\Lambda}, d_{H})$ is a compact metric space (see~\cite[Thm.~4.4.15]{AT}).
\begin{lemma}~
\label{lm:properties-S_*}
    \begin{enumerate}
        \item[\textup{(1)}]
        $S_{*} \subset \widetilde{S}$.
        \item[\textup{(2)}]
        $S_{*}$ is weakly unordered.
    \end{enumerate}
\end{lemma}
\begin{proof}
    (1) is a consequence of the definitions of $S_{*}$ and  $\widetilde{S}$.  Suppose to the contrary that $x, y \in S_* \cap (C_I)_{+}$ are such that $x \ll_I y$.  By construction, there are $u, v \in \Delta_I$ such that $x = \lim\limits_{n \to \infty} R_n(u) u$ and $y = \lim\limits_{n \to \infty} R_n(v) v$.  Since the relation $\ll_I$ is relatively open in $(C_I)_{+}$, $R_n(u) u \ll_{I} R_n(v) v$ for $n$ sufficiently large.  But $R_n(u) u, R_n(v) v \in S_{n} \cap (C_I)_{+}$, which contradicts the weak unorderedness of $S_n$.
\end{proof}

\begin{lemma}~
\label{lm:properties-tilde-S}
    \begin{enumerate}
        \item[\textup{(1)}]
        $d_{H}(S_n, \widetilde{S}) \to 0$  as $n \to \infty$.
        \item[\textup{(2)}]
        $\widetilde{S}$ is invariant under $F$.
    \end{enumerate}
\end{lemma}
Recall that $F$ denotes here $F{\restriction}_{\Lambda}$, so the invariance of $\widetilde{S}$ means that, first, if $x \in \widetilde{S}$ then $F(x) \in \widetilde{S}$, and, second, if $x \in \Lambda$ is such that $F(x) \in \widetilde{S}$ then $x \in \widetilde{S}$.

\begin{proof}
    Part (1) is a consequence of the fact that for the compact $\Lambda$, the Kuratowski convergence and the convergence in the Hausdorff metric are the same, see~\cite[Prop.~4.4.15]{AT}.

    As a consequence, $d_H(S_{n+1}, \widetilde{S}) \to 0$, as $n \to \infty$.  But $S_{n+1} = F(S_n)$, hence $d_H(F(S_{n}), \widetilde{S}) \to 0$ as $n \to \infty$.  On the other hand, $F(S_n)$ converge to $F(\widetilde{S})$ in the Hausdorff metric, so $\widetilde{S} = F(\widetilde{S})$.
\end{proof}

Next we consider the convergence of suitably generated decreasing sequences of weakly unordered sets.

\medskip

Put $S^{0} := \bd_{C_{+}} \Lambda = H(1 + \varkappa) \in \widehat{\mathcal{U}}$ (see Example~\ref{ex:weakly-unordered-not-ordered}).  We will consider a sequence $(S^{n})_{n = 0}^{\infty}$, where $S^{n} := F^{n}(S^0)$.
 Lemma~\ref{cUintocU} shows that  $S^n := F^n(S^0)\in \widehat{\mathcal{U}}$ for $n = 1, 2 , \dots$.

By Lemma~\ref{lem3}(a), we have $S^1 \llcurly S^0$ and by Lemma~\ref{lm:order-preserving-U}, $S^{n+1} \llcurly S^n$ for all $n = 1, 2, \dots$.  Denote by $R^n \colon \Delta \to (0, \infty)$ the radial representation of $S^n$.  Since $S_0 \llcurly S^0$, it follows from Lemma~\ref{lm:order-preserving-U} that $S_n \llcurly S^n \llcurly \ldots \llcurly S^1 \llcurly S^0$ for all $n \in \NN$, consequently $S_{*} \llcurly S^0$.  Again by Lemma~\ref{lm:order-preserving-U}, $S_{*} \llcurly S^n$ for all $n \in \mathbb{N}$, which means that $R_{*}(u) < \ldots < R^{n+1}(u) < R^{n}(u) < \ldots < R^0(u)$ for all $u \in \Delta$.  Therefore there exists $\lim\limits_{n \to \infty} R^n(u) =: R^{*}(u) \in [R_{*}(u), R^{0}(u))$. Define $S^{*} := \{\, R^{*}(u) u : u \in \Delta \,\}$, and put $\widecheck{S}$ to be the Kuratowski limit of the sequence $(S^{n})_{n = 0}^{\infty}$.  Note that by construction $\widecheck{S} \subset [1, \infty) S_{*}$.

The following are analogues of Lemmas~\ref{lm:properties-S_*} and~\ref{lm:properties-tilde-S}.
\begin{lemma}~
\label{lm:properties-S^*}
    \begin{enumerate}
        \item[\textup{(1)}]
        $S^{*} \subset \widecheck{S}$.
        \item[\textup{(2)}]
        $S^{*}$ is weakly unordered.
    \end{enumerate}
\end{lemma}
\begin{lemma}~
\label{lm:properties-check-S}
    \begin{enumerate}
        \item[\textup{(1)}]
        $d_{H}(S^n, \widecheck{S}) \to 0$  as $n \to \infty$.
        \item[\textup{(2)}]
        $\widecheck{S}$ is invariant under $F$.
    \end{enumerate}
\end{lemma}

We introduce now the Harnack metric on $C_{+}$.  For more details see~\cite{Kr}.

For $x, y \in C_{+}$, $x \ne 0$, we define
\begin{equation*}
    \lambda(x, y) := \sup\{\, \lambda \ge 0 : y - {\lambda}x \in C_{+} \, \}
    \\
    = \min\left\{ \frac{y_i}{x_i} : 1 \le i \le d \text{ such that } x_i > 0 \,\right\}.
\end{equation*}
We put $\lambda(x, y) = \infty$ if and only if $x = 0$.
Following~\cite{Kr}, we call $\lambda$ the \textit{order function} on $C_{+}$.  $\mu(x, y) := \min\{\lambda(x, y), \lambda(y, x)\}$ is the \textit{symmetrised order function} on $C_{+}$.

We define on $C_{+} \setminus \{0\}$ the \textit{Harnack metric}:
\begin{equation*}
    h(x, y) := 1 - \mu(x, y).
\end{equation*}

Let $x \le y$ both belong to $(C_I)_{++}$ for some $\emptyset \ne I \subset \{1, \dots, d\}$.  We have
\begin{equation*}
    \mu(x, y) = \lambda(y, x) = \min\left\{ \frac{x_i}{y_i} : i \in I \,\right\}.
\end{equation*}
The following is straightforward.
\begin{lemma}
\label{lm:1}
    Let $\lim\limits_{n \to \infty} x_n = x$, $\lim\limits_{n \to \infty} y_n = y$, $x_n \le y_n$ for all $n \in \NN$, and $x_n, y_n, x, y \in (C_I)_{++}$ for some $\emptyset \ne I \subset \{1, \dots, d\}$. Then $h(x, y) = \lim\limits_{n \to \infty} h(x_n, y_n)$.
\end{lemma}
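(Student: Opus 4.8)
The plan is to reduce everything to the explicit formula for $\mu$ recorded just above the lemma and then invoke continuity of the minimum of finitely many ratios.

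First I would observe that, since $x_n \le y_n$ for all $n$ and the componentwise order $\le$ is closed under taking limits, passing to the limit gives $x \le y$; together with the hypothesis $x, y \in (C_I)_{++}$, the pair $(x,y)$ satisfies the conditions of the displayed identity preceding the lemma, so $h(x,y) = 1 - \mu(x,y) = 1 - \min\{\, x_i/y_i : i \in I \,\}$. Likewise, for each $n$ we have $x_n \le y_n$ with both $x_n, y_n \in (C_I)_{++}$, so the same identity yields $h(x_n, y_n) = 1 - \min\{\, (x_n)_i/(y_n)_i : i \in I \,\}$.

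Next, for each fixed $i \in I$ the hypotheses give $(x_n)_i \to x_i$ and $(y_n)_i \to y_i > 0$, so the denominators are eventually bounded away from $0$ and $(x_n)_i/(y_n)_i \to x_i/y_i$. Since $I$ is finite, and the minimum over a fixed finite index set of convergent real sequences converges to the minimum of the limits, we obtain $\min\{\, (x_n)_i/(y_n)_i : i \in I \,\} \to \min\{\, x_i/y_i : i \in I \,\}$, and therefore $h(x_n, y_n) \to h(x,y)$.

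There is no genuine obstacle here; the only place the strict positivity in the hypotheses is used (rather than merely $x, y \in (C_I)_{+}$) is to guarantee that the limiting coordinates $y_i$, $i \in I$, are strictly positive, which is precisely what makes the ratio maps continuous at the limit. Everything else is the elementary observation that a minimum over a fixed finite index set commutes with limits.
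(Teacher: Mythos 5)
Your proof is correct and is exactly the ``straightforward'' argument the paper has in mind (the paper states Lemma~\ref{lm:1} without proof): reduce $h$ to the formula $\mu(x,y)=\min\{x_i/y_i : i\in I\}$ valid for ordered pairs in $(C_I)_{++}$, use closedness of $\le$ to pass the order to the limit, and note that each ratio converges because the limiting denominators are strictly positive, so the finite minimum commutes with the limit. Nothing further is needed.
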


\begin{lemma}
\label{lem:aux1}
  Let $\emptyset \ne I \subset \{1, \ldots, d\}$ and $x < y$, $x, y \in \Lambda \cap (C_I)_{++}$, be such that $F(x) < F(y)$ and $F_i(x) < F_i(y)$ for $i \in I$.  Then for each $\emptyset \ne J \subset I$ there holds
    \begin{equation*}
        \mu(\pi_J(F(x)), \pi_J(F(y))) > \mu(\pi_J(x), \pi_J(y)).
    \end{equation*}
    Consequently
    \begin{equation*}
        h(\pi_J(F(x)), \pi_J(F(y))) < h(\pi_J(x), \pi_J(y)).
    \end{equation*}
\end{lemma}
\begin{proof}
    We claim that
    \begin{equation*}
        \frac{F_i(x)}{F_i(y)} = \frac{x_i f_i(x)}{y_i f_i(y)} > \frac{x_i}{y_i}.
    \end{equation*}
    Indeed, the above follows directly from \ref{AS3-b-mod}.  In case of \ref{AS-3}, by weak retrotonicity in $\Lambda$ (\ref{AS-weak-retro}), for each $i \in I$ there holds $x_i < y_i$, and we can apply \ref{AS-3-competitive}.

    Therefore,
    \begin{align*}
       \mu(\pi_J(x), \pi_J(y)) & =  \min\left\{ \frac{x_i}{y_i} : i \in J \,\right\}
       \\[0.5ex]
       & < \min\left\{ \frac{F_i(x)}{F_i(y)} : i \in J \,\right\} = \mu(\pi_J(F(x)), \pi_J(F(y))).
    \end{align*}
\end{proof}

Now with the aid of the Harnack metric we may establish

\begin{theorem}
\label{thm:S_*=S^*}
    $S_{*} = \widetilde{S} = \widecheck{S} = S^{*}$.
\end{theorem}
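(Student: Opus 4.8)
The plan is to induct on the dimension $d$, use the inductive hypothesis to push the whole problem into the interior $C_{++}$, and then rule out a putative maximal radial ``stretch'' between $S_*$ and $S^*$ by exploiting that $F=\diag[\mathrm{id}]f$ is strictly sub\nobreakdash-\hspace{0pt}homogeneous in the sense forced by the second part of AS3 ($Df\le 0$ with strictly negative diagonal). First I would record that, since $S_*=\{R_*(u)u:u\in\Delta\}$ (Lemma~\ref{lm:aux-1}) with $R_*$ continuous (Proposition~\ref{prop:properties-limit}), while $R^n(u)u\to R^*(u)u$ forces $R^*(u)u\in S^*$ for every $u$ by condition (K2), and since $S^*\subset[1,\infty)S_*$ with $R_*\le R^*$ (because $S_*\llcurly S^n$ for all $n$), the assertion $S_*=S^*$ is \emph{equivalent} to the equality of functions $R_*=R^*$ on $\Delta$. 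So it suffices to prove $R_*=R^*$.

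For $d=1$ this holds because $S_n=\{G_1^n(\varepsilon)\}$ and $S^n=\{G_1^n(1+\varkappa)\}$ both converge to $\{1\}$ by Lemma~\ref{lm:axes}(iii). For $d>1$ I would first show $R_*=R^*$ on $\partial\Delta$: for each nonempty $I\subsetneq\{1,\dots,d\}$ the restriction $F|_{(C_I)_+}$ is a Kolmogorov map on $\RR^{|I|}$ again satisfying AS1--AS4 (the matrix $Z^I$ is a principal block of $Z$, so $\rho(Z^I)=\rho(Z)$), and, using Remark~\ref{rm:invariance} and the injectivity of $F$ on faces, $S_n\cap(C_I)_+=F^n(S_0\cap(C_I)_+)$ with $S_0\cap(C_I)_+=\varepsilon\Delta_I$, and likewise $S^n\cap(C_I)_+=F^n\bigl(\bd_{(C_I)_+}[0,(1+\varkappa)\sum_{i\in I}\ev_i]\bigr)$. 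Hence $S_*\cap(C_I)_+$ and $S^*\cap(C_I)_+$ are precisely the two Kuratowski limits constructed for the lower\nobreakdash-\hspace{0pt}dimensional system $F|_{(C_I)_+}$, which coincide by the induction hypothesis; taking the union over all such $I$ gives $R_*=R^*$ on $\partial\Delta$.

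Now suppose, towards a contradiction, that $R_*\ne R^*$. Since $R_*$ is continuous and positive and $R^*=\inf_n R^n$ with each $R^n$ continuous, a short semicontinuity argument shows that $\sigma:=\sup_{u\in\Delta}R^*(u)/R_*(u)$ is finite, exceeds $1$, and is attained at some $u_0$; by the previous paragraph $u_0$ lies in the relative interior of $\Delta$. Put $x_0:=R_*(u_0)u_0\in S_*\cap C_{++}$ and $y_0:=R^*(u_0)u_0=\sigma x_0\in S^*\cap C_{++}$. Let $x_-:=(F|_\Lambda)^{-1}(x_0)$, which exists since $F(S_*)=S_*$ and $F|_\Lambda$ is injective (Lemma~\ref{lm:homeomorphism}(i)); by Remark~\ref{rm:invariance}, $x_-\in S_*\cap C_{++}$. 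Set $v:=T(x_-)$, $r:=R^*(v)/R_*(v)\in(1,\sigma]$, and let $\hat y:=r\,x_-=R^*(v)v\in S^*\cap C_{++}$ be the point where the ray through $x_-$ meets $S^*$. (Here $r>1$: if $r=1$ then $\hat y=x_-$, so $x_-\in S^*$ and hence $x_0=F(x_-)\in S^*$; together with $y_0=\sigma x_0\in S^*$ this already exhibits two distinct $\ll$\nobreakdash-\hspace{0pt}comparable points of $S^*\cap C_{++}$, which is impossible — see below.) From $x_0=F(x_-)=\diag[x_-]f(x_-)$ and $\hat y=r\,x_-$ with $r>1$, $x_-\gg0$, the second part of AS3 gives $f_i(r x_-)<f_i(x_-)$ for every $i$ (the $i$-th coordinate strictly increases from $x_-$ to $rx_-$ while the others do not decrease, and $\partial_i f_i<0$), whence
\begin{equation*}
  F_i(\hat y)=r(x_-)_i\,f_i(r x_-)<r(x_-)_i\,f_i(x_-)=r\,(x_0)_i\le\sigma\,(x_0)_i=(y_0)_i,\qquad i=1,\dots,d,
\end{equation*}
the last inequality because $r\le\sigma$. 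Thus $F(\hat y)\ll y_0$, with $F(\hat y)\in F(S^*)=S^*$ (Lemma~\ref{lm:S^*}(ii)) and $F(\hat y)\in C_{++}$.

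This is the desired contradiction: $F(\hat y)$ and $y_0$ are two distinct points of $S^*\cap C_{++}$ ordered by $\ll$, whereas $S^*$ can contain no such pair — approximate it by pairs in $S^n\cap C_{++}$ using $d_H(S^n,S^*)\to0$ and invoke the weak unorderedness of each $S^n$ with $I=\{1,\dots,d\}$, exactly as in the proof of Proposition~\ref{prop-S-lower}. Hence $R_*=R^*$, i.e.\ $S_*=S^*$. I expect the main obstacle to be the boundary of $C_+$: the argument only bites in $C_{++}$ (strict inequalities, and weak unorderedness taken over the full index set), so one must ensure that the extremal ray $u_0$ avoids $\partial\Delta$, and this is precisely what the induction on $d$ secures, using that $S_*\preccurlyeq S^*$ is already established.
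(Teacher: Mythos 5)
Your proposal is correct, but it takes a genuinely different route from the paper's. The paper argues by contradiction on a single ray: it takes the two ordered intersection points $x<y$ of that ray with $S_*$ and $S^{*}$, pulls them backwards by $F^{-1}$ inside the invariant sets, passes to $\alpha$\nobreakdash-limit points, and uses the strict increase of the symmetrized order function under $F$ (Proposition~\ref{prop:aux1}, via Lemma~\ref{lm:1}) to contradict the fact that the Harnack distances along the backward orbit converge to their supremum; the boundary of $C_{+}$ is handled intrinsically through the projections $\pi_J$ and the supports $I(\cdot)$, with no induction and using only the \emph{invariance} of $S^{*}$. You instead settle the boundary by induction on $d$ (the restriction of $F$ to each proper face again satisfies AS1--AS4; note that only $\rho(Z^I(x))\le\rho(Z(x))<1$ is true and needed, not the equality you assert), and then run an extremal-ratio argument in the interior: the maximizer $u_0$ of the upper semicontinuous ratio $R^{*}/R_{*}$ must lie in the relative interior, and a single forward application of $F$ to $\hat y=r\,x_-$, using only the strict negativity of the diagonal of $Df$ from AS3, yields $F(\hat y)\ll y_0$ with both points in $S^{*}\cap C_{++}$, contradicting the weak unorderedness of $S^{*}$ --- a property not recorded in the paper before this theorem but obtainable, as you indicate, by K1-approximation from the weakly unordered $S^n$ (as in Proposition~\ref{prop-S-lower}). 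Both proofs exploit the same underlying mechanism (strict change of coordinate ratios of ordered points under $F$, coming from AS3), but yours trades backward orbits and $\alpha$\nobreakdash-limit sets for a dimension induction plus a one-step forward argument; to be complete you should also spell out two routine points you pass over quickly: that $\normone{z}\le R^{n}(T(z))$ for every $z\in S^{*}$ and all $n$ (so that $R_{*}=R^{*}$ really gives $S^{*}\subset S_{*}$, completing your claimed equivalence), and the semicontinuity/attainment step for $\sigma$.
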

\begin{proof}
    We start by proving that $\widetilde{S} = \widecheck{S}$. Suppose not.  This means, in view of Lemmas~\ref{lm:properties-S_*}(1) and~\ref{lm:properties-S^*}(1), that there is $v \in \Delta$ such that the half-line starting at $0$ and passing through $v$ intersects $\widetilde{S}$ at $x$ and intersects $\widecheck{S}$ at $y$, where $x < y$.  Put $I :=  I(x)$.  We have $x, y \in (C_I)_{++}$ and $x \ll_{I} y$.

    By Lemmas~\ref{lm:properties-tilde-S}(2) and~\ref{lm:properties-check-S}(2), $\widetilde{S}$ and $\widecheck{S}$ are invariant.

    From weak retrotonicity (\ref{AS-weak-retro}) it follows that $F^{-n}(x) \ll_{I} F^{-n}(y)$ for all $n \in \NN$.

    We write
    \begin{equation*}
        \alpha(x) := \{\, z \in \Lambda : \text{there is } n_k \to \infty \text{ such that } F^{-n_k}(x) \to z \text{ as } k \to \infty \,\},
    \end{equation*}
    and similarly for $\alpha(y)$.

    Fix $\xi \in \alpha(x)$.  By passing to a subsequence, if~necessary, we can assume that $F^{-n_k}(y)$ converges to $\eta \in \alpha(y)$ for the same sequence $n_k \to \infty$.  There holds, by the closedeness of the $\le$ relation, $\xi \le \eta$.  Put $\emptyset \ne J := I(\xi)$.  Then $\xi = \pi_J(\xi)$ and $\pi_J(\xi), \pi_J(\eta) \in (C_J)_{++}$.  We have, in view of Lemmas~\ref{lm:1} and \ref{lem:aux1},
    \begin{align*}
        h(\pi_J(\xi), \pi_J(\eta)) & = \lim_{k \to \infty} h(\pi_J(F^{-n_k}(x)), \pi_J(F^{-n_k}(y))
        \\
        & = \sup_{n \in \NN} h(\pi_J(F^{-n}(x)), \pi_J(F^{-n}(y)) > 0,
    \end{align*}
    from which it follows that $\pi_J(\xi) < \pi_J(\eta)$.  Let $K \ne \emptyset $ be the set of those $j \in J$ for which $\xi_j < \eta_j$.  Weak retrotonicity (\ref{AS-weak-retro}) implies $F^{-1}(\xi) < F^{-1}(\eta)$ and $(F^{-1})_j(\xi) <  (F^{-1})_j(\eta)$ for all $j \in K$.  We have, by Lemmas~\ref{lm:1} and \ref{lem:aux1},
    \begin{align*}
        h(\pi_K(\xi), \pi_K(\eta)) & = \lim_{k \to \infty} h(\pi_K(F^{-n_k}(x)), \pi_K(F^{-n_k}(y))
        \\
        & = \sup_{n \in \NN} h(\pi_{K}(F^{-n}(x)), \pi_{K}F^{-n}(y))
        \\
        & = \lim_{k \to \infty} h(\pi_K(F^{-n_k-1}(x)), \pi_K(F^{-n_k-1}(y))
        \\
        & =  h(\pi_K(F^{-1}(\xi)), \pi_K(F^{-1}(\eta))).
    \end{align*}
    But, again by Lemma~\ref{lem:aux1}, $h(\pi_K(F^{-1}(\xi)), \pi_K(F^{-1}(\eta))) > h(\pi_K(\xi), \pi_K(\eta))$, a contradiction.

    In a similar way we prove that it is impossible to have $x, y \in \widetilde{S}$ [resp.\ $x, y \in \widecheck{S}$] with $x < y$.  As a consequence we obtain the equality $S_{*} = \widetilde{S}$ [resp.\ $S^{*} = \widecheck{S}$].
\end{proof}

\begin{proposition}~
\label{prop:properties-limits}
    \begin{enumerate}
        \item[\textup{(a)}]
        The functions $R_n \colon \Delta \to (0, \infty)$ converge uniformly, as $n \to \infty$, to $R_{*}$.
        \item[\textup{(b)}]
        The functions $R^n \colon \Delta \to (0, \infty)$ converge uniformly, as $n \to \infty$, to $R_{*}$.
        \item[\textup{(c)}]
        $S_{*} \in \widehat{\mathcal{U}}$. Under \ref{AS4-strong} or \ref{AS-3-strong}, $S_{*} \in \mathcal{U}$.
        \item[\textup{(d)}]
        $S_{*}$ is invariant under $F$.
    \end{enumerate}
\end{proposition}
\begin{proof}
  Since $\Delta$ is compact, we use the fact that uniform convergence (to a function that is necessarily continuous) is equivalent to \textit{continuous convergence}:  for any sequence $(u_n)_{n = 1}^{\infty} \subset \Delta$ convergent to $u$ there holds $\lim\limits_{n \to \infty} R_n(u_n) = R_*(u)$.  To prove the latter, observe that for any subsequence such that $\lim\limits_{k \to \infty} R_{n_k}(u_{n_k}) u_{n_k} = x$ there holds, by (K2) in Definition~\ref{def:Kuratowski} and Theorem~\ref{thm:S_*=S^*}, $x \in S_{*}$.  As, by the continuity of $T$, $\lim\limits_{k \to \infty} T(R_{n_k}(u_{n_k}) u_{n_k}) = T(x)$ and $T(R_{n_k}(u_{n_k}) u_{n_k}) = u_{n_k} \to u$ as $k \to \infty$, we have $T(x) = u$, hence $x = R_{*}(u) u$.  This proves (a), the part of part (b) being similar.

  Since $S_{*}$ equals, by Theorem~\ref{thm:S_*=S^*}, the Kuratowski limit $\widetilde{S}$, it is compact, and as the radial projection $T{\restriction}_{S_{*}} \colon S_{*} \to \Delta$ is a continuous bijection, $S_{*}$ is homeomorphic to $\Delta$.  By Lemma~\ref{lm:properties-S_*}(2), $S_{*}$ is weakly unordered, consequently $S_{*} \in \widehat{\mathcal{U}}$.  The last sentence in part (c) is a consequence of the equality $S_{*} = \widetilde{S}$, Lemma~\ref{lm:properties-tilde-S}(2) and Lemma~\ref{cUintocU}.

  Part (d) is, again in view of $S_{*} = \widetilde{S}$, a consequence of Lemma~\ref{lm:properties-tilde-S}(2).
\end{proof}
We set $\Sigma := S_{*}$.

\medskip

Observe that, by Proposition~\ref{prop:properties-limits}(c)--(d), $\Sigma$ satisfies \ref{P-unordered}, \ref{P-radial-proj} and \ref{P-invariant} in the definition of the carrying simplex [weak carrying simplex].  Notice also that by taking $\varepsilon$ sufficiently small in the definition of $S_0$ we see that $\Sigma$ attracts all points in $\Lambda\setminus \{0\}$.

\begin{theorem}
\label{thm:omega-Lambda}
The compact attractor of bounded sets  $\Gamma = \langle 0, \Sigma \rangle$.
\end{theorem}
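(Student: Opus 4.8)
The plan is to show the two inclusions $\Gamma \subset \langle 0, \Sigma \rangle$ and $\langle 0, \Sigma \rangle \subset \Gamma$ separately, using the characterization of $\Gamma$ as $\omega(\Lambda)$ together with the convergence results already established for the sequences $(S_n)$ and $(S^n)$.

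\textbf{The inclusion $\Gamma \subset \langle 0, \Sigma \rangle$.} Recall $\Gamma = \bigcap_{n=0}^\infty F^n(\Lambda)$ and $S^0 = \bd_{C_+}\Lambda = H(1+\varkappa)$, so that $\langle 0, S^0 \rangle = \Lambda$ (as $\Lambda = [0,(1+\varkappa)\ev]$ and $S^0$ is its relative boundary). By Lemma~\ref{lm:aux3}, $F(\langle 0, S \rangle) = \langle 0, F(S)\rangle$ for every $S \in \widehat{\mathcal U}$, so by induction $F^n(\Lambda) = F^n(\langle 0, S^0 \rangle) = \langle 0, S^n \rangle$. Hence $\Gamma = \bigcap_{n=0}^\infty \langle 0, S^n \rangle$. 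Now $\langle 0, S^n \rangle = \{\lambda u : 0 \le \lambda \le R^n(u),\, u \in \Delta\}$, and since $R^n \downarrow R_*$ pointwise (Proposition~\ref{prop:properties-limit-upper}, using Theorem~\ref{thm:S_*=S^*} to identify the limit), the intersection over $n$ of the nested sets $\langle 0, S^n \rangle$ is exactly $\{\lambda u : 0 \le \lambda \le R_*(u),\, u \in \Delta\} = \langle 0, \Sigma \rangle$. This gives $\Gamma = \langle 0, \Sigma \rangle$ directly, so in fact a single argument suffices rather than two inclusions — but I will double-check the reverse direction for safety.

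\textbf{The inclusion $\langle 0, \Sigma \rangle \subset \Gamma$.} Here I would use that $\Gamma$ is the compact attractor of bounded sets (Theorem~\ref{thm:global-attractor-exists}), equivalently the set of points with bounded full orbits (Proposition~\ref{prop:attractor-characterization}). Given $z \in \langle 0, \Sigma \rangle$, I need a bounded full orbit through $z$. Since $\Sigma = S_*$ is invariant under $F|_\Lambda$ (Proposition~\ref{prop-S-lower}) and $F|_\Lambda$ is a homeomorphism onto its image (Lemma~\ref{lm:homeomorphism}(i)), every point of $\Sigma$ has a full orbit inside $\Sigma \subset \Lambda$, hence bounded. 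For a general $z \in \langle 0, \Sigma \rangle$, write $z = \lambda u$ with $0 \le \lambda \le R_*(u)$; then $z \in [0, x]$ where $x = R_*(u) u \in \Sigma$. Using Proposition~\ref{lem1}(1), which gives $[0, F(x)] \subset F([0,x])$ on $\Lambda$, one propagates a full orbit of $x$ backwards through order intervals to obtain a full orbit of $z$ lying in $\langle 0, \Sigma \rangle \subset \Lambda$, which is bounded. Thus $z \in \Gamma$.

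\textbf{Main obstacle.} The delicate point is the identification of $\bigcap_n \langle 0, S^n \rangle$ with $\langle 0, \Sigma \rangle$: one must be careful that pointwise decreasing convergence $R^n \downarrow R_*$ genuinely yields the set-theoretic intersection (the monotonicity $S^{n+1} \llcurly S^n$ and $R^n(u) \to R_*(u)$ for \emph{every} $u$ are what make this work, with no uniformity needed for the intersection itself, though Proposition~\ref{prop:properties-limit-upper} supplies it). A secondary subtlety is verifying $\langle 0, S^0 \rangle = \Lambda$, i.e.\ that the order interval $[0,(1+\varkappa)\ev]$ really is the radial cone under its own relative boundary $H(1+\varkappa)$; this is geometrically clear but should be stated. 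The backward-orbit construction for points strictly inside $\langle 0, \Sigma \rangle$ is routine given Proposition~\ref{lem1}(1) and the invariance of $\Sigma$, so I expect the first inclusion's bookkeeping with radial representations to be the part requiring the most care.
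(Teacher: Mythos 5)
Your main computation is exactly the paper's proof: identify $\Gamma=\omega(\Lambda)=\bigcap_n F^n(\langle 0,S^0\rangle)$, apply Lemma~\ref{lm:aux3} inductively to get $\bigcap_n\langle 0,S^n\rangle$, and pass to radial representations with $\inf_n R^n=R_*$ (via Theorem~\ref{thm:S_*=S^*}). This is correct and essentially identical to the paper's argument; the separate reverse-inclusion paragraph is redundant since the intersection identity already yields equality.
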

\begin{proof}
  \begin{equation*}
    \begin{aligned}
      \omega(\Lambda) & = \bigcap_{n = 0}^{\infty} \cl\Biggl(\bigcup_{k = n}^{\infty} F^k(\langle
      0, S^0 \rangle)\Biggr)
      \\
      & = \bigcap_{n = 0}^{\infty} F^n(\langle 0, S^0 \rangle) & \text{ (by $F^{n+1}(\langle 0, S^0 \rangle) \subset F^{n}(\langle 0, S^0 \rangle)$)}
      \\
      & = \bigcap_{n = 0}^{\infty} \langle 0, S^n \rangle & \text{ (by Lemma~\ref{lm:aux3})}
      \\
      & = \{\, \alpha v : v \in \Delta, 0 \le \alpha \le \inf_{n}R^n(v) \,\}
      \\
      & = \{\, \alpha v : v \in \Delta, 0 \le \alpha \le R_{*}(v) \,\} = \langle 0, \Sigma \rangle.
    \end{aligned}
   \end{equation*}
\end{proof}
As it is straightforward that $\Sigma$ is the relative boundary of $\langle 0, \Sigma \rangle$ in $C_{+}$, the satisfaction of the additional property \ref{P-order-convex} follows from Theorem~\ref{thm:omega-Lambda} together with Lemma~\ref{lm:between}.

\subsection{Back to Dynamics on $C_{+}$}
\label{subsect:back-to-C_+}
Assume \ref{AS-C^0}--\ref{AS4}, and \ref{AS-3} or \ref{AS3-mod} where in case of \ref{AS3-mod} we assume furthermore that any positive number or $\infty$  can serve as $\varkappa$.  From now on, $F$ is considered defined on the whole of $C_{+}$ again.

By Proposition~\ref{prop:0-repeller}, $\{0\} \subset \Gamma$ is a uniform repeller (indeed, Proposition~\ref{prop:0-repeller} states that $\{0\}$ is a uniform repeller for $F{\restriction}_{\Lambda}$; but, as $\Lambda$ is forward invariant, $\{0\}$ is a uniform repeller for $F$, too).

Recall that, by Theorem~\ref{thm:global-attractor-exists}(b), $\Gamma$ is the compact attractor of bounded sets in $C_{+}$, which is the same as the compact attractor of neighbourhoods of compact sets in $C_{+}$.  In the context of Conley's attractor--repeller pairs \cite{conley1} we may decompose $\Lambda$ into an attractor $\Sigma$, a repeller $\{0\}$ and a set of connecting orbits. According to~\cite[Theorem~5.17 on p.~137]{S-T}, the compact attractor $\Gamma$ of neighbourhoods of compact sets in $C_{+}$ is the union of pairwise disjoint sets,
\begin{equation}
\label{eq:repeller-attractor-decomposition}
    \Gamma = \{0\} \cup E \cup H,
\end{equation}
with the following properties:
\begin{itemize}
    \item
    $H$ attracts any bounded $A \subset C_{+}$ with $0 \notin \bar{A}$; further, if $x \in C_{+} \setminus H$ has a bounded backward orbit $\{\, \ldots, x_{-n-1}, x_{-n}, \ldots, x_{-2}, x_{-1}, x\,\}$ then $\lim\limits_{n \to \infty} x_{-n} = 0$;
    \item
    $E$ consists of those $x \in \Gamma$ for which there exists a bounded total orbit $\{\, \ldots, x_{-n-1}, x_{-n}, \allowbreak \ldots, x_{-2}, x_{-1}, x, x_{1}, x_2, \ldots, x_{n}, x_{n+1}, \ldots \,\}$ such that \allowbreak $\lim\limits_{n \to \infty} x_{-n} = 0$ and  $\lim\limits_{n \to \infty} \dist(x_{n}, H) = 0$.
\end{itemize}
We claim that $E = (0, 1) \Sigma$.  Indeed, because $F\!\!\restriction_{\Gamma}$ is a homeomorphism of $\Gamma$ onto itself and $\Sigma$ is invariant, $E \cap \Sigma = \emptyset$.  As $0 \notin E \subset \Gamma$, and $\Gamma \setminus \{0\}$ is the disjoint union of $(0, 1) \Sigma$ and $\Sigma$, there holds $E =  (0, 1) \Sigma$ and $H = \Sigma$.

So we have the following classification of points in $x \in C_{+}$.
\begin{theorem}~
\label{thm:class}
    \begin{enumerate}
        \item[\textup{(a)}]
        If $x \in (0, 1) \Sigma$, then $F^{n}(x) \in (0, 1) \Sigma$ for all $n \in \NN$ and $\lim\limits_{n \to \infty} \dist(F^n(x), \Sigma) = 0$; furthermore, there exists a backward orbit $\{\ldots, x_{-2}, x_{-1}, x\}$ with $\lim\limits_{n \to \infty} x_{-n} = 0$.
        \item[\textup{(b)}]
        If $x \in \Sigma$, then $F^{n}(x) \in \Sigma$ for all $n \in \NN$; furthermore, there exists a backward orbit contained in $\Sigma$.
        \item[\textup{(c)}]
        If $x \in (1, \infty) \Sigma$ then $\lim\limits_{n \to \infty} \dist(F^n(x), \Sigma) = 0$, and there are the following possibilities:
        \begin{itemize}
            \item
            $F^{n}(x) \in (1,\infty) \Sigma$ for all $n \in \NN$;
            \item
            there is $n_0 \in \NN \setminus \{0\}$ such that $F^{n}(x) \in (1,\infty) \Sigma$ for $n = \{0, \ldots, n_0-1\}$ and $F^{n}(x) \in \Sigma$ for $n \ge n_0$;
            \item
            there is $n_0 \in \NN \setminus \{0\}$ such that $F^{n}(x) \in (1,\infty) \Sigma$ for $n = \{0, \ldots, n_0-1\}$ and $F^{n}(x) \in (0, 1) \Sigma$ for $n \ge n_0$;
        \end{itemize}
    \end{enumerate}
\end{theorem}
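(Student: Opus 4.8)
The plan is to read the whole statement off the attractor--repeller decomposition \eqref{eq:repeller-attractor-decomposition} together with the identifications $E = (0,1)\Sigma$ and $H = \Sigma$ established just above it, using three elementary observations. First I would record the trichotomy: since every ray from the origin meets $\Sigma$ in exactly the point $R_*(u)u$ with $u = T(x)$, the cone minus its vertex is the disjoint union $C_{+}\setminus\{0\} = (0,1)\Sigma \sqcup \Sigma \sqcup (1,\infty)\Sigma$, a nonzero $x$ lying in the three pieces according as $\normone{x}$ is less than, equal to, or greater than $R_*(T(x))$. Second, because $F = \diag[\mathrm{id}]f$ with $f \gg 0$ on $C_{+}$, one has $I(F(y)) = I(y)$, so $F$ maps $C_{+}\setminus\{0\}$ into itself. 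Third --- the invariance that does the real work --- $F(\Sigma)=\Sigma$ with $F|_{\Sigma}$ a homeomorphism (Proposition~\ref{prop-S-lower}, Lemma~\ref{lm:homeomorphism}(i)), and hence $F((0,1)\Sigma) = (0,1)\Sigma$: indeed $F|_{\Gamma}$ is a bijection of $\Gamma$ onto itself which fixes $\{0\}$ and $\Sigma$ setwise, so it restricts to a bijection of $E = \Gamma\setminus(\{0\}\cup\Sigma) = (0,1)\Sigma$.

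Part (i) then follows at once: for $x \in (0,1)\Sigma = E$, the third observation gives $F^n(x) \in (0,1)\Sigma$ for all $n \in \NN$; since $\{x\}$ is bounded with $0\notin\overline{\{x\}}$, the attracting property of $H = \Sigma$ gives $\dist(F^n(x),\Sigma)\to 0$; and the description of $E$ in the bullet list after \eqref{eq:repeller-attractor-decomposition} supplies a bounded full orbit through $x$ with $\lim_{n\to\infty} x_{-n}=0$, whose negative part is the required backward orbit. Part (ii) is even shorter: $F^n(x)\in F^n(\Sigma)=\Sigma$ for all $n$, and since $F|_{\Sigma}\colon\Sigma\to\Sigma$ is a homeomorphism, $x_{-n}:=(F|_{\Sigma})^{-n}(x)$, $n\in\NN$, defines a backward orbit contained in $\Sigma$.

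For part (iii), let $x \in (1,\infty)\Sigma$. As in (i), $\{x\}$ is bounded and bounded away from $0$, so $\dist(F^n(x),\Sigma)\to 0$. If $F^n(x) \in (1,\infty)\Sigma$ for every $n$ we are in the first case. Otherwise let $n_0\ge 1$ be the least index with $F^{n_0}(x)\notin(1,\infty)\Sigma$; since $F^{n_0}(x)\ne 0$, the trichotomy of the first paragraph puts $F^{n_0}(x)$ in $\Sigma$ or in $(0,1)\Sigma$. In the former subcase forward invariance of $\Sigma$ gives $F^{n}(x)\in\Sigma$ for all $n\ge n_0$, the second alternative of the statement; in the latter, forward invariance of $(0,1)\Sigma$ gives $F^{n}(x)\in(0,1)\Sigma$ for all $n\ge n_0$, the third alternative.

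I do not expect a genuine obstacle: every ingredient has been proved above and the argument is essentially bookkeeping. The one point that needs a careful sentence is the forward (indeed full) invariance of $(0,1)\Sigma$, which is why I would phrase it through the bijection $F|_{\Gamma}$ preserving the three pieces $\{0\}$, $\Sigma$, $(0,1)\Sigma$ of the decomposition; once that is in hand, the three alternatives in (iii) are exhaustive and mutually exclusive precisely because an orbit cannot re-enter $(1,\infty)\Sigma$ once it has left it.
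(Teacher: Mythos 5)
Your proposal is correct and follows essentially the same route as the paper: the paper states Theorem~\ref{thm:class} as an immediate consequence of the attractor--repeller decomposition \eqref{eq:repeller-attractor-decomposition} and the identifications $E=(0,1)\Sigma$, $H=\Sigma$ made just above it, which is exactly what you use. The only difference is that you spell out the bookkeeping (the radial trichotomy of $C_{+}\setminus\{0\}$ and the forward invariance of $(0,1)\Sigma$ via the bijectivity of $F|_{\Gamma}$) that the paper leaves implicit, and those details are accurate.
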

Further we have
\begin{proposition}
\label{prop:attraction}
    $\Sigma$ attracts any bounded $A \subset C_{+}$ with $0 \notin \bar{A}$.   In other words, $\Sigma$ is the compact attractor of neighbourhoods of compact sets in $C_{+} \setminus \{0\}$.
\end{proposition}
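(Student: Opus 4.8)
The plan is to deduce both assertions directly from the attractor--repeller decomposition~\eqref{eq:repeller-attractor-decomposition} together with the identification $H = \Sigma$ established just above it. Indeed, the list of properties accompanying~\eqref{eq:repeller-attractor-decomposition} already states that $H$ attracts any bounded $A \subset C_{+}$ with $0 \notin \bar{A}$; since $H = \Sigma$, the first sentence of the proposition is immediate and needs no further argument.

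For the reformulation I would first record the elementary facts required to match the definition of the compact attractor of neighbourhoods of compact sets. Because $F = \diag[\mathrm{id}]f$ with $f \gg 0$, one has $F(x) = 0$ exactly when $x = 0$, so $C_{+} \setminus \{0\}$ is forward invariant; it is also (relatively) open in $C_{+}$, so speaking of relative neighbourhoods in $C_{+} \setminus \{0\}$ is unproblematic. The set $\Sigma = S_{*}$ is nonempty and compact (as $R_{*}$ is continuous on the compact $\Delta$), it is invariant under $F$ by Proposition~\ref{prop-S-lower}, and it lies in $C_{+} \setminus \{0\}$ since $R_{*} > 0$.

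It then remains to exhibit, for an arbitrary compact $A \subset C_{+} \setminus \{0\}$, a relative neighbourhood that $\Sigma$ attracts. I would take $\delta := \dist(0, A)$, which is positive by compactness of $A$, and set $U := \{\, x \in C_{+} : \dist(x, A) < \delta/2 \,\}$; this $U$ is bounded, is a relative neighbourhood of $A$ in $C_{+} \setminus \{0\}$, and satisfies $0 \notin \bar{U}$, so the first part of the proposition applies and yields that $\Sigma$ attracts $U$. Finally, uniqueness of the compact attractor of neighbourhoods of compact sets in $C_{+} \setminus \{0\}$ is supplied by \cite[Theorem~2.19 on p.~37]{S-T}, which upgrades ``$\Sigma$ is such an attractor'' to ``$\Sigma$ is \emph{the} such attractor.'' The only point to be careful about is the bookkeeping around the origin and the ambient space; there is no genuine analytic obstacle here, since all the real work was done in establishing~\eqref{eq:repeller-attractor-decomposition} and the equality $H = \Sigma$.
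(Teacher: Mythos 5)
Your proposal is correct and takes essentially the same approach as the paper: the paper states this proposition as an immediate consequence of the attractor--repeller decomposition \eqref{eq:repeller-attractor-decomposition} (via \cite[Theorem~5.17 on p.~137]{S-T} applied to the uniform repeller $\{0\}$) together with the identification $H = \Sigma$, which is exactly your first paragraph. Your explicit verification of the ``compact attractor of neighbourhoods of compact sets in $C_{+}\setminus\{0\}$'' reformulation---choosing a bounded relative neighbourhood $U$ of a compact $A$ with $0 \notin \bar{U}$ and invoking uniqueness from \cite[Theorem~2.19 on p.~37]{S-T}---is routine bookkeeping that the paper leaves implicit.
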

We have thus obtained property \ref{P-attraction}  in the definition of the carrying simplex [weak carrying simplex], as well as the additional properties \ref{P-attractor} and \ref{P-characterization}.

\smallskip
It follows from general results on attractors, see, e.g., \cite[Thms.~2.39 and~2.40]{S-T}, that $\Sigma$, being a compact attractor of neighbourhoods of compact sets, is \textit{stable}, meaning that for each neighbourhood $U$ of $\Sigma$ in $C_{+}$ there exists a neighbourhood $V$ of $\Sigma$ in $C_{+}$ such that $F^n(V) \subset U$ for all $n \in \NN$.  Indeed, by our construction, $\{\, \llangle S_n, S^n \rrangle : n \in \NN \,\}$ is a base of relatively open forward invariant neighbourhoods of $\Sigma$, from which the stability of $\Sigma$ follows in a straightforward way.

\subsection{Asymptotic completeness}
\label{subsect:asymptotic}
Assume \ref{AS-C^0}--\ref{AS4}, and \ref{AS-3} or \ref{AS3-mod} where in case of \ref{AS3-mod} we assume furthermore that any positive number can serve as $\varkappa$.

\begin{proposition}
\label{prop:asymptotic}
    Let additionally \ref{AS4-strong} or \ref{AS-3-strong} hold.  For each $x \in C_{+} \setminus \{0\}$ there exists $y \in \Sigma$ such that $\lim\limits_{n \to \infty} \norm{F^n(x) - F^n(y)} = 0$.
\end{proposition}
\begin{proof}
  As the case $x \in \Sigma$ is obvious, in view of Lemma~\ref{lm:dissipative-1} and Theorem~\ref{thm:class}, by replacing $x$ with some of its iterates, we can assume that either $F^n(x) \in (0, 1) \Sigma$ or $F^n(x) \in (1, \infty) \Sigma \cap \Lambda$, for all $n \in \NN$.

  In the case  $F^n(x) \in (0, 1) \Sigma$ for all $n$, let $A_n := (F^n\!\!\restriction_\Lambda)^{-1} ((F^n(x) + \RR^d_{+}) \cap \Sigma)$.  The sets $A_n$ are compact and, since $(F^n(x) + \RR^d_{+}) \cap \Sigma$ are nonempty, they are nonempty, too.  We claim that $A_{n + 1} \subset A_n$.  Indeed, let $\eta \in A_{n + 1}$, which means that $F^{n + 1}(\eta) \in \Sigma$ and $F^{n + 1}(x) \le F^{n + 1}(\eta)$.  Weak retrotonicity \eqref{AS-weak-retro} gives that $F^{n}(x) \le F^{n}(\eta)$, that is, $\eta \in (F^n\!\!\restriction_\Lambda)^{-1} ((F^n(x) + \RR^d_{+}) \cap \Sigma)$.  The intersection $A := \bigcap\limits_{n = 0}^{\infty} A_n$ is compact and nonempty.  Pick $y \in A$.  By construction, $F^n(x) < F^n(y)$ for all $n \in \NN$.  Suppose to the contrary that $\norm{F^n(x) - F^n(y)} \not\to 0$ as $n \to \infty$.  Take a subsequence $n_k \to \infty$ such that $\lim\limits_{k \to \infty} F^{n_k}(x) = u$ and $\lim\limits_{k \to \infty} F^{n_k}(y) = v$ with $u \ne v$.  Since the $\le$ relation is preserved in the limit, we have $u \le v$, consequently $u < v$.  By Theorem~\ref{thm:class}, both $u$ and $v$ belong to $\Sigma$, which contradicts the unorderedness of $\Sigma$ (Proposition~\ref{prop:properties-limits}(c)).

  In the case $F^n(x) \in (1, \infty) \Sigma \cap \Lambda$ for all $n$, let $A_n := (F^n{\restriction}_\Lambda)^{-1} ([0, F^n(x)] \cap \Sigma)$.  The sets $A_n$ are compact and nonempty.  We claim that $A_{n + 1} \subset A_n$.  Indeed, let $\eta \in A_{n + 1}$, which means that $F^{n + 1}(\eta) \in \Sigma$ and $F^{n + 1}(\eta) \le F^{n + 1}(x)$.  Weak retrotonicity \eqref{AS-weak-retro} gives that $F^{n}(\eta) \le F^{n}(x)$, that is, $\eta \in (F^n{\restriction}_\Lambda)^{-1} ([0, F^n(x)]\cap \Sigma)$.  The intersection $A := \bigcap\limits_{n = 0}^{\infty} A_n$ is compact and nonempty.  Pick $y \in A$.  By construction, $F^n(y) < F^n(x)$ for all $n \in \NN$.  The rest of the proof goes as in the previous paragraph.
\end{proof}

\medskip

Without the additional assumptions stated in Proposition \ref{prop:asymptotic} we have the weaker result.

\begin{proposition}
\label{prop:asymptotic-weak}
    For each $x \in \Gamma \setminus \{0\}$ there exists $y \in \Sigma$ such that $\lim\limits_{n \to \infty} \norm{F^n(x) - F^n(y)} = 0$.
\end{proposition}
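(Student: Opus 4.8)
The plan is to follow the scheme of the proof of Proposition~\ref{prop:asymptotic}, with two simplifications and one new difficulty. The simplifications are that, for $x \in \Gamma \setminus \{0\}$, only the ``below'' alternative occurs — since by Theorem~\ref{thm:omega-Lambda} (together with $\Sigma = \bd_{C_{+}}\Gamma$) one has $\Gamma \setminus \{0\} = \Sigma \cup (0,1)\Sigma$ with the union disjoint, and by Theorem~\ref{thm:class}(i) the forward orbit of any $x \in (0,1)\Sigma$ stays in $(0,1)\Sigma$ — and that the case $x \in \Sigma$ is trivial ($y = x$). The new difficulty is that, $\Sigma$ being merely weakly unordered, the final contradiction used in Proposition~\ref{prop:asymptotic} (``$u < v$ with $u, v \in \Sigma$'') is no longer available and must be replaced by a Harnack-metric argument in the spirit of Theorem~\ref{thm:S_*=S^*}.

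So fix $x \in (0,1)\Sigma$ and write $I := I(x)$, so that $I(F^n(x)) = I$ for all $n$ by Remark~\ref{rm:invariance}. First I would produce the candidate $y$, taking care to keep it inside the face $(C_I)_{+}$: set $A_n := (F^n|_{\Lambda})^{-1}\bigl((F^n(x) + \RR^d_{+}) \cap \Sigma \cap (C_I)_{+}\bigr)$. These sets are compact; nonempty, since writing $F^n(x) = \alpha_n z_n$ with $\alpha_n \in (0,1)$, $z_n \in \Sigma$, one has $z_n \ge F^n(x)$, $I(z_n) = I$, and $z_n \in F^n(\Sigma)$ by the invariance of $\Sigma$; and nested, $A_{n+1} \subset A_n$, by Proposition~\ref{lem1}(1), Lemma~\ref{lem3}(i), and the invariance of $\Sigma$ under $F|_{\Lambda}$ (Proposition~\ref{prop-S-lower}). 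Picking $y \in \bigcap_n A_n$ gives $F^n(y) \in \Sigma \cap (C_I)_{+}$ and $F^n(x) \le F^n(y)$ for all $n$, the inequality strict because $x \ne y$ and $F|_{\Lambda}$ is injective (Lemma~\ref{lm:homeomorphism}(i)), while $y \ge x$ forces $I(y) = I$. This last point is the essential gain over choosing an arbitrary element of $\bigcap_n A_n$: it prevents the discrepancy between the two orbits from leaking into coordinates outside $I$.

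For the convergence, by weak retrotonicity (Proposition~\ref{lem1}(2)) the sets $\{\,i : F^n_i(x) < F^n_i(y)\,\}$ are nonincreasing in $n$, hence eventually equal to a fixed nonempty $K \subset I$; for such $n$, $F^n_i(x) = F^n_i(y)$ for $i \notin K$ and all $K$-coordinates of both orbits are positive, so $\mu_n := \mu(\pi_K(F^n(x)), \pi_K(F^n(y))) \in (0,1)$ is defined and, by Proposition~\ref{prop:aux1}, strictly increasing; put $\bar\mu := \lim_n \mu_n \le 1$. If $\bar\mu = 1$, then $F^n_i(x)/F^n_i(y) \to 1$ for every $i$, and since the orbits lie in the bounded set $\Lambda$ this yields $\norm{F^n(x) - F^n(y)} \to 0$, as desired. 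To exclude $\bar\mu < 1$, pass to a subsequence with $F^{n_k}(x) \to u$, $F^{n_k}(y) \to v$ and $F^{n_k}_i(x)/F^{n_k}_i(y) \to \bar\rho_i$ for each $i \in K$; then $u \le v$, $\bar\rho_i \ge \bar\mu > 0$, and for $i \in K$ one has either $v_i = 0$ (forcing $u_i = 0$) or $v_i > 0$ and $u_i = \bar\rho_i v_i$. Setting $L := \{\, i \in K : 0 < u_i < v_i \,\}$, the case $L = \emptyset$ would give $u = v$, contradicting the choice of subsequence; so $L \ne \emptyset$, $L \subset I(u) \cap I(v)$, and $u < v$ with $u_i < v_i$ on $L$. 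Now run the Theorem~\ref{thm:S_*=S^*} device on $L$: $\mu^L_n := \mu(\pi_L(F^n(x)), \pi_L(F^n(y)))$ is, again by Proposition~\ref{prop:aux1} (as $L \subset K$), strictly increasing with limit $\bar\mu^L = \min_{i \in L}\bar\rho_i < 1$; but using $F^{n_k+1}(x) \to F(u)$, $F^{n_k+1}(y) \to F(v)$, the preservation of the supports $I(u), I(v) \supset L$ (Remark~\ref{rm:invariance}), Lemma~\ref{lm:1}, and Proposition~\ref{prop:aux1} applied to $u < v$,
\[
    \bar\mu^L = \lim_k \mu^L_{n_k + 1} = \mu(\pi_L(F(u)), \pi_L(F(v))) > \mu(\pi_L(u), \pi_L(v)) = \bar\mu^L,
\]
a contradiction. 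Hence $\bar\mu = 1$ and $\norm{F^n(x) - F^n(y)} \to 0$.

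The step I expect to be the main obstacle is the one carried out in the last paragraph: replacing the one-line appeal to unorderedness in Proposition~\ref{prop:asymptotic} by the strict contraction of the Harnack metric along orbits (Proposition~\ref{prop:aux1}), correctly localized to the sub-face $L$ of coordinates that stay strictly ordered in the limit — and, upstream of that, arranging via the choice of $y$ in $(C_I)_{+}$ that such a nonempty $L$ necessarily exists whenever the two orbits fail to synchronize.
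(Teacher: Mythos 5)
Your construction of $y$ (the nested sets $A_n$ cut down to $\Sigma \cap (C_I)_{+}$, with nonemptiness via $F^n(x) = \alpha_n z_n$ and invariance of $\Sigma$) and your use of Proposition~\ref{prop:aux1} together with Lemma~\ref{lm:1} are exactly the paper's devices; the difference is that you organize the endgame around proving $\bar\mu = 1$ for the block $K$, while the paper argues coordinate by coordinate. That reorganization contains a genuine gap: the sentence ``the case $L = \emptyset$ would give $u = v$, contradicting the choice of subsequence'' is unjustified. The subsequence was chosen only so that the points and the ratios converge; nothing in that choice forbids $u = v$, and $u = v$ is perfectly compatible with $\bar\mu < 1$, because for a minimizing index $i^*$ both $F^{n_k}_{i^*}(x)$ and $F^{n_k}_{i^*}(y)$ may tend to $0$ while their ratio stays at $\bar\rho_{i^*} = \bar\mu < 1$. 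Consequently the goal of that paragraph, ``hence $\bar\mu = 1$,'' is not established — and it is not what the Harnack argument can give: all it yields is that for each $i \in K$ either $\bar\rho_i = 1$ or $F^n_i(x), F^n_i(y) \to 0$, and the Proposition does not require $\bar\mu = 1$ (in the second alternative the $i$th coordinates of the two orbits still synchronize).

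The repair is short and turns your proof into the paper's: drop $\bar\mu$ as the target and show instead that every convergent subsequence has $u = v$. If $u \ne v$, then, since coordinates outside $K$ eventually coincide and, as you correctly observe, $u_i = 0 < v_i$ is impossible for $i \in K$ (the individual ratios on $K$ are eventually increasing, hence bounded below by a positive constant), there must be some $i \in K$ with $0 < u_i < v_i$, i.e. $L \ne \emptyset$, and your Harnack contradiction on $L$ (the chain giving $\bar\mu^L > \bar\mu^L$) applies verbatim; compactness then gives $\norm{F^n(x) - F^n(y)} \to 0$. With that rearrangement the argument is correct and coincides in substance with the paper's proof, which runs the same ``limit equals infimum'' trick for each single coordinate $i \in I$: first ruling out $0 = u_i < v_i$ by the monotonicity of $h(\pi_i(\cdot), \pi_i(\cdot))$ along the orbits, and then ruling out $0 < u_i < v_i$ by Proposition~\ref{prop:aux1} applied at the limit points.
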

\begin{proof}
  Let a nonzero $x \in \Gamma \setminus \Sigma$.  The sets $A_n$ are now defined as $(F^n{\restriction}_\Lambda)^{-1} ((F^n(x) + \RR^d_{+}) \cap \Sigma \cap (C_I)_{+})$, where $I := I(x)$.  As in the proof of Proposition~\ref{prop:asymptotic} we obtain the existence of $y \in \Sigma \cap (C_I)_{+}$ such that $F^n(x) < F^n(y)$ for all $n \in \NN$.  Take a subsequence $n_k \to \infty$ such that $\lim\limits_{k \to \infty} F^{n_k}(x) = u$ and $\lim\limits_{k \to \infty} F^{n_k}(y) = v$. By the closedness of the $\le$ relation, $u \le v$.  For $j \in \{1, \ldots, d\} \setminus I$ we have $u_j = v_j = 0$.

  Let $i \in I$.  By Lemma~\ref{lem:aux1},
    \begin{equation}
     \label{eq:Harnack-infimum}
        \lim_{k \to \infty} h(\pi_{i}(F^{n_k}(x)), \pi_{i}(F^{n_k}(y)))
        = \inf_{n \in \NN} h(\pi_{i}(F^{n}(x)), \pi_{i}(F^{n}(y))).
    \end{equation}
    As a consequence, $0 = u_i < v_i$ is impossible, that is, either $u_i = v_i = 0$ or $0 < u_i \le v_i$.  Suppose $0 < u_i < v_i$.  It follows from~\eqref{eq:Harnack-infimum} with the help of Lemma~\ref{lm:1} that
    \begin{align*}
        h(\pi_{i}(u), \pi_{i}(v)) & = \lim_{k \to \infty} h(\pi_{i}(F^{n_k}(x)), \pi_{i}(F^{n_k}(y)))
        \\
        & = \inf_{n \in \NN} h(\pi_{i}(F^{n}(x)), \pi_{i}(F^{n}(y)))
        \\
        & = \lim_{k \to \infty} h(\pi_{i}(F^{n_k+1}(x)), \pi_{i}(F^{n_k+1}(y))) =  h(\pi_{i}(F(u)), \pi_{i}(F(v))),
    \end{align*}
  But by Lemma~\ref{lem:aux1}, $h(\pi_{i}(F(u)), \pi_{i}(F(v))) < h(\pi_{i}(u), \pi_{i}(v))$, a contradiction.  Since $u = v$ for any convergent subsequence, the statement of the proposition holds.
  \end{proof}
Therefore, \ref{P-asymptotic} is satisfied.  Hence, since now on, $\Sigma$ can be legitimately called the [weak] carrying simplex.

For another proof of Proposition~\ref{prop:asymptotic}, see \cite[Appendix]{RH}, and for another proof of Proposition~\ref{prop:asymptotic-weak}, see \cite[Lem.~5.3]{Hou21}.

\subsection{Lipschitz Property}
\label{subsect:Lipschitz}
We formulate the simple geometrical result:
\begin{lemma}
\label{lm:Lipschitz}
    Let $S \in \widehat{\mathcal{U}}$.  Then
    \begin{enumerate}
        \item[\textup{(a)}]
        $\Pi{\restriction}_{S}$ is injective, consequently, a homeomorphism onto its image;
        \item[\textup{(b)}]
        $\norm{\Pi x - \Pi y} \ge \dfrac{1}{\sqrt{1 + d}} \norm{x - y}$ for any $x, y \in S$.
    \end{enumerate}
\end{lemma}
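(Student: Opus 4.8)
The plan is to reduce both parts to a single structural fact: \emph{$\Sigma$ contains no pair of points $x,y$ with $x \gg y$}. Granting this, part~(i) is immediate: if $\Pi x = \Pi y$ with $x,y \in \Sigma$, then $x-y \in \ker \Pi = \RR\ev$, say $x-y = c\ev$; if $c \neq 0$ we may, after interchanging $x$ and $y$, assume $c>0$, and then $x = y + c\ev \gg y$, a contradiction. Hence $x=y$, and the continuous injection $\Pi|_{\Sigma}$ of the compact set $\Sigma$ is a homeomorphism onto its image. For part~(ii), take $x \neq y$ in $\Sigma$ and set $u := x-y$. By the structural fact neither $x \gg y$ nor $y \gg x$, so $u$ has a coordinate $\le 0$ and a coordinate $\ge 0$. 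Deleting a nonpositive coordinate and applying Cauchy--Schwarz to the remaining $d-1$ coordinates gives $u\cdot\ev \le \sqrt{d-1}\,\norm{u}$, and symmetrically $-u\cdot\ev \le \sqrt{d-1}\,\norm{u}$, so $(u\cdot\ev)^2 \le (d-1)\norm{u}^2$. Since $\norm{\Pi u}^2 = \norm{u}^2 - (u\cdot\ev)^2/d$, this yields $\norm{\Pi u}^2 \ge \norm{u}^2/d \ge \norm{u}^2/(1+d)$, which is the asserted inequality (with $u = x-y$ and $\Pi u = \Pi x - \Pi y$).

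To prove the structural fact, suppose $x,y \in \Sigma$ with $x \gg y$. Then $x \in C_{++}$; writing $I := I(y)$, if $I = \{1,\ldots,d\}$ then $x,y \in C_{++}$ with $y \ll x$, contradicting weak unorderedness, so $I$ is a proper subset. Since $\Sigma$ is invariant and $F|_{\Sigma}$ is a homeomorphism onto $\Sigma$ (Proposition~\ref{prop-S-lower} together with Lemma~\ref{lm:homeomorphism}(i)), $x$ and $y$ have unique backward orbits $(x_{-n})_{n\ge 0}$, $(y_{-n})_{n\ge 0}$ in $\Sigma$, and by Remark~\ref{rm:invariance} $x_{-n} \in C_{++}$ and $y_{-n} \in (C_I)_{++}$ for all $n$. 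Applying Proposition~\ref{lem1}(2) (weak retrotonicity of $F$ on $\Lambda$) to $F(y_{-(n+1)}) = y_{-n} \ll x_{-n} = F(x_{-(n+1)})$ and iterating gives $y_{-n} \ll x_{-n}$ for all $n$.

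From here the argument proceeds as in the proof of Theorem~\ref{thm:S_*=S^*}. For each nonempty $J \subseteq I$, Proposition~\ref{prop:aux1} shows that $n \mapsto \mu(\pi_J(y_{-n}), \pi_J(x_{-n})) = \min_{j\in J}(y_{-n})_j/(x_{-n})_j \in (0,1)$ is strictly decreasing; write $\mu^{J}_{\infty}$ for its limit. Pass to a subsequence $n_k \to \infty$ with $y_{-n_k} \to \eta \in \Sigma \cap (C_I)_{+}$ and $x_{-n_k} \to \xi \in \Sigma$, so $\eta \le \xi$, and put $J_0 := I(\eta) \subseteq I$. Lemma~\ref{lm:1} identifies $\mu(\pi_{J_0}(\eta),\pi_{J_0}(\xi))$ with $\mu^{J_0}_{\infty} < 1$, hence $\pi_{J_0}(\eta) \neq \pi_{J_0}(\xi)$; therefore $\eta < \xi$ and $K := \{\, j \in J_0 : \eta_j < \xi_j \,\}$ is nonempty. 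Proposition~\ref{lem1}(2) gives $F^{-1}(\eta) < F^{-1}(\xi)$ with $(F^{-1}\eta)_j < (F^{-1}\xi)_j$ for $j \in K$, so Proposition~\ref{prop:aux1} yields $\mu(\pi_K(\eta),\pi_K(\xi)) > \mu(\pi_K(F^{-1}\eta),\pi_K(F^{-1}\xi))$. But Lemma~\ref{lm:1}, applied along the subsequence, identifies the left-hand side with $\mu^{K}_{\infty}$ (the limit of $\mu(\pi_K(y_{-n_k}),\pi_K(x_{-n_k}))$) and, using continuity of $(F|_{\Sigma})^{-1}$, the right-hand side also with $\mu^{K}_{\infty}$ (the limit of $\mu(\pi_K(y_{-n_k-1}),\pi_K(x_{-n_k-1}))$), a contradiction. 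Hence no such $x,y$ exist.

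The main obstacle is exactly this structural fact. Under AS3$'$ the simplex $\Sigma$ is genuinely unordered and the fact is trivial, so parts~(i)--(ii) become pure linear algebra; but for the weak carrying simplex weak unorderedness alone does not rule out $x \gg y$ when $y$ lies on $\partial C_{+}$ (it rules this out only when $x$ and $y$ have the same support). Ruling it out in general forces one to descend, via the backward dynamics, to the face carrying the $\alpha$-limit of $y$ and to invoke the strict contractivity of the Harnack metric there --- the same mechanism that drives the proof of Theorem~\ref{thm:S_*=S^*}.
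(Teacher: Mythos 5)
Your proposal is correct, but it reaches the key structural fact --- that no two points of $\Sigma$ are related by $\ll$ --- by a genuinely different and much heavier route than the paper. The paper disposes of the boundary case in two lines: since $\Sigma$ projects radially and homeomorphically onto $\Delta$ with continuous positive radial representation $R_*$, the set $\Sigma \cap C_{++}$ is dense in $\Sigma$; so if $x, y \in \Sigma$ with $x \ll y$ and $x \in \partial C_{+}$, one replaces $x$ by a nearby $\tilde{x} \in \Sigma \cap C_{++}$ still satisfying $\tilde{x} \ll y$ (the relation $\ll$ being open), and then $\tilde{x}, y$ violate weak unorderedness with $I = \{1, \dots, d\}$. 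This is the same perturbation trick already used in the proofs of Lemmas~\ref{lm:between}, \ref{cUintocU} and~\ref{lm:aux-1}. Your closing claim that ruling out $x \gg y$ ``forces'' a descent along backward orbits to the $\alpha$\nobreakdash-limit and the Harnack-metric contraction is therefore overstated: weak unorderedness \emph{combined with property (II)} suffices, with no dynamics at all. That said, your backward-orbit argument (mirroring Theorem~\ref{thm:S_*=S^*}) is sound --- the inductive use of Proposition~\ref{lem1}(2) to get $y_{-n} \ll x_{-n}$, the monotonicity from Proposition~\ref{prop:aux1} restricted to index sets inside $I(\eta)$ where the coordinates stay positive, and the limit identification via Lemma~\ref{lm:1} are all applied within their hypotheses --- so it constitutes a valid, if laborious, alternative. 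Your linear-algebra step in (ii) also differs slightly and is in fact sharper: deleting a coordinate of the appropriate sign and applying Cauchy--Schwarz gives $(u \cdot \ev)^2 \le (d-1)\norm{u}^2$ and hence the constant $1/\sqrt{d}$, whereas the paper normalizes $u = \pm\hat{\ev} + v$, $v \in V$, and settles for $1/\sqrt{1+d}$; either bound yields the stated inequality.
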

\begin{proof}
  If $\Pi{\restriction}_{S}$ were not injective, there would be $x, y \in S$ with $y = x + {\alpha} \ev$ for some $\alpha > 0$, which contradicts Lemma~\ref{lm:aux-U}.

  In view of the linearity of $\Pi$ and the fact that no two points in $S$ are in the $\ll$ relation, we will prove (b) if we show that for any nonzero $u \in \RR^d \setminus ( C_{++} \cup ( - C_{++}))$ there holds
  \begin{equation}
  \label{eq:lipschitz}
      \frac{\norm{\Pi u}}{\norm{u}} \ge \dfrac{1}{\sqrt{1 + d}}.
  \end{equation}
  Let $u \in \RR^d$ be arbitrary nonzero.  Again by linearity, we can restrict ourselves to the case when $u = \pm \hat{\ev} + v$, where $v \in V$.  If $\norm{v} < \dfrac{1}{\sqrt{d}}$, then $\norminfty{v} \le \norm{v} < \dfrac{1}{\sqrt{d}}$, so $u \in C_{++} \cup (- C_{++})$.  Consequently, for $u \in \RR^d \setminus ( C_{++} \cup ( - C_{++}))$ we have $\norm{v} \ge \frac{1}{\sqrt{d}}$ and, by the Pythagorean theorem,
  \begin{equation*}
      \frac{\norm{\Pi u}}{\norm{u}} = \frac{\norm{v}}{\norm{\hat{\ev} + v}} = \frac{\norm{v}}{\sqrt{1 + \norm{v}^2}} \ge \dfrac{1}{\sqrt{1 + d}}.
  \end{equation*}
\end{proof}
As $\Sigma \in \widehat{\mathcal{U}}$, the additional property \ref{P-Lipschitz} follows.

\subsection{A sufficient condition for retrotonicity of the map $F$ when $C^1$}
\label{subsect:retrotone}
In the case of discrete\nobreakdash-\hspace{0pt}time models checking whether \ref{AS4} or \ref{AS4-strong} is satisfied may not be an easy task.  In the present subsection we give a simple criterion when $F$ is $C^1$.

 Recall that the spectral radius $\rho(P)$ of a square matrix $P$ is the modulus of the eigenvalue with maximum modulus.
A nonsingular $M$\nobreakdash-\hspace{0pt}matrix is a square matrix $P = \rho_0 I - Z$ where $Z$ is nonnegative and $\rho_0$ exceeds the spectral radius of $\mM$ and that a $P$\nobreakdash-\hspace{0pt}matrix is a square matrix $P$ with positive  principal minors.  It is a standard result that every nonsingular $M$\nobreakdash-\hspace{0pt}matrix is a $P$\nobreakdash-\hspace{0pt}matrix.  Moreover, every eigenvalue of a nonsingular $M$\nobreakdash-\hspace{0pt}matrix has positive real part and every real eigenvalue of a $P$\nobreakdash-\hspace{0pt}matrix is positive \cite{H-J}.

In \cite{G-N} Gale and Nikaid\^o proved an important result on the invertibility of maps whose derivatives are $P$\nobreakdash-\hspace{0pt}matrices on rectangular subsets of $\RR^d$: If $\Omega\subset \RR^d$ is a rectangle and $F \colon \Omega\rightarrow \RR^d$ is a continuously differentiable map such that $\DD F(x)$ is a $P$\nobreakdash-\hspace{0pt}matrix for all $x \in \Omega$ then $F$ is injective in $\Omega$.

Our standing assumption in the present subsection is that $F$ is a Kolmogorov map satisfying  \ref{AS-C1}, \ref{AS-e}, \ref{A-3-diff} and~\ref{GN}, where
\begin{enumerate}
    \myitem[\textup{A1$'$}] \label{AS-C1}
    \quad $f$ is of class $C^1$, with $f(x) \gg 0$ for all $x \in C_+$;
    \myitem[\textup{C}] \label{A-3-diff}
    \quad $\DD f(x) \le 0$ with its diagonal terms negative, for all $x \in C_+$;
    \myitem[\textup{GN}]  \label{GN}
    \quad  The $d\times d$ matrix $\mM(x) = \left(\left(-\frac{x_i}{f_i(x)}\frac{\partial f_i(x)}{\partial x_j}\right)\right)$ has  spectral radius $\rho(\mM(x))<1$ for  all $x \in [0, \ev] \setminus \{0\}$.
\end{enumerate}
Sometimes instead of~\ref{A-3-diff} a stronger assumption is made:
\begin{enumerate}
    \myitem[\textup{C$'$}] \label{A-3-diff-strong}
    \quad $\DD f(x) \ll 0$ for all $x \in C_+$.
\end{enumerate}
We pause a~little to reflect on the $C^1$ property.  The standard definition of a $C^1$ map on $C_{+}$ is that it can be extended to a $C^1$ map on an open subset of $\RR^d$ containing $C_{+}$.  In~fact, since $C_{+}$ is  sufficiently regular, it follows from Whitney's extension theorem (see, e.g., \cite{Robbin}) that the definitions are just the same as in the case of functions defined on open sets, with derivatives replaced by one\nobreakdash-\hspace{0pt}sided derivatives where necessary.

As usual, $\DD F(x)$ denotes the Jacobian matrix of $F$ at the point $x$. If $F$ is invertible with differentiable inverse $F^{-1}$ then by $\DD F^{-1}$ we mean the derivative of $F^{-1}$; this matrix is to be distinguished from $(\DD F)^{-1}$, the matrix inverse of $\DD F$.
\begin{remark}
    In view of \ref{AS-C1}, \ref{A-3-diff} implies \ref{AS-3}, and \ref{A-3-diff-strong} implies \ref{AS-3-strong}.
\end{remark}
\begin{remark}
Observe that \ref{GN} is equivalent to saying that for any $x \in [0, \ev] \setminus \{0\}$ the $\card{I(x)} \times \card{I(x)}$ matrix $\mM(x) := \left(-\frac{x_i}{f_i(x)}\frac{\partial f_i(x)}{\partial x_j}\right)_{i, j \in I(x)}$ has  spectral radius less than $1$.
\end{remark}
\begin{remark}
    Regarding \ref{GN}, observe that for all $x \in C_{+}$ there holds
    \begin{equation}
    \label{eq:Z(x)}
        \DD F(x) = \diag[f(x)] \, (I - Z(x)).
    \end{equation}
\end{remark}
It follows from \ref{GN}, by the continuity of the spectral radius of a matrix, that for any sufficiently small $\varkappa > 0$ there holds $\rho(x) < 1$ for all $x \in [0, (1 + \varkappa) \ev] \setminus \{0\}$.

We choose such a $\varkappa > 0$, and put $\Lambda := [0, (1 + \varkappa) \ev]$.

\begin{lemma}
\label{lm:inverse}
    For $x \in \Lambda$, $(\DD F(x))_{ij}^{-1} \ge  0$ for all $i, j \in \{1, \ldots, d\}$; moreover, for $x \in \Lambda'$,
    \begin{itemize}
        \item
        $(\DD F(x))^{-1}_{ii} > 0$ for all $i \in I(x)$;
        \item
        Under \ref{A-3-diff-strong}, $(\DD F(x))^{-1}_{ij} > 0$ for all $i, j \in I(x)$.
        \item
        $\DD F_{ij}(x) \leq  0$ for $i\neq j$ and $\DD F_{ii}(x) > 0$ for $i=1,\ldots,d$.  Under~\ref{A-3-diff-strong}, $\DD F_{ij}(x) < 0$ for $i \ne j$, provided $x_i > 0$.
    \end{itemize}
\end{lemma}
\begin{proof}
  Let $x \in \Lambda'$. Then $\mM(x)$ is a nonnegative matrix with $\rho(\mM(x)) < 1$.    $P(x) := I - \mM(x)$ is an $M$\nobreakdash-\hspace{0pt}matrix, and also a $P$\nobreakdash-\hspace{0pt}matrix for $x \in \Lambda$. Since, by~\eqref{eq:Z(x)}, $\DD F = \diag[f](I-\mM)$, $\rho(Z(x))<1$, $Z(x)_{ij} \ge 0$ for $i, j \in I(x)$, and  $f(x)\gg 0$ we find that $(\DD F(x))^{-1} = (I - \mM (x))^{-1} \diag [f(x)]^{-1} = \left(\sum\limits_{k=0}^\infty \mM (x) ^k\right)  \diag[f(x)]^{-1} \geq 0$ for $x\in \Lambda$, and $(\DD F(x))^{-1}_{ii} > 0$ for $i \in I(x)$  [under \ref{A-3-diff-strong}, $(\DD F(x))^{-1}_{ij} > 0$ for $i, j \in I(x)$].

 Lastly, since $\DD F(x)$ is a nonsingular $M$\nobreakdash-\hspace{0pt}matrix its diagonal elements must be positive and its off\nobreakdash-\hspace{0pt}diagonal elements must be non-positive.  The final sentence follows directly from~\ref{AS-C1} and~\ref{A-3-diff-strong} by the form of $Z(x)$.
\end{proof}

\begin{proposition}~
\label{lem1}
  \begin{enumerate}
      \item[\textup{(a)}]
      $F{\restriction}_{\Lambda}$ is a $C^1$\nobreakdash-\hspace{0pt}diffeomorphism onto its image.
      \item[\textup{(b)}]
      $F$ is weakly retrotone in $\Lambda$. If \ref{A-3-diff-strong} holds, then $F$ is  retrotone in $\Lambda$.
  \end{enumerate}
\end{proposition}
\begin{proof}
 By Lemma~\ref{lm:inverse}, $\DD F(x)$ is invertible at each $x \in \Lambda$, so the inverse function theorem implies that $F{\restriction}_{\Lambda}$ is a local $C^1$ diffeomorphism.  Applying Lemma~\ref{lm:AS-homeo} gives us that $F{\restriction}_{\Lambda}$ is indeed a $C^1$ diffeomorphism.  For an alternative approach, see~\cite[Thm.~4]{G-N}.

We proceed to the proof of part (b).  We will prove weak retrotonicity by induction on the dimension $d$.  For $d = 1$ it follows from \ref{GN} that $F$ is increasing on the segment $[0, 1+\varkappa]$, so the required property holds. Now, let $d > 1$ and assume that weak retrotonicity holds for any system satisfying \ref{AS-C1}, \ref{A-3-diff} and~\ref{GN}, of dimension $< d$.  Suppose to the contrary that there exist $x, y \in \Lambda$ and (up~to possible relabelling) $1 \le m \le d$ such that
\begin{equation*}
    F(x) \le F(y) \quad \text{but} \quad
    \begin{cases}
        x_i > y_i & \text{ for } i = 1, 2, \ldots, m
        \\
        x_i \le y_i & \text{ for } i = m + 1, \ldots, d.
    \end{cases}
\end{equation*}
First suppose $m=d$, i.e. $F(x)\leq F(y)$ but $x\gg y$. As noted in the proof of Lemma \ref{lm:inverse}, under the stated conditions $DF$ is a $P$-matrix on $\Lambda$. Thus by~\cite[Thm.~3]{G-N}, for $x, y \in \Lambda$, the inequalities $F(x) \le F(y)$ and $x \ge y$ only have the solution $x=y$, and so the case $m = d$ is not possible.

This leaves the case $1\leq m < d$.
We identify the subspace $\{\, (\xi_1, \ldots, \xi_m, 0, \ldots, 0) \,\}$ with $\RR^m$.  Define $H \colon \Lambda \cap \RR^m \to \RR^m$, $H = (H_1, \ldots, H_m)$, by
\begin{equation*}
    H_i(\alpha_1, \ldots, \alpha_m) := F_i(\alpha_1, \ldots, \alpha_m, y_{m+1}, \ldots, y_d).
\end{equation*}
Observe that $H$ satisfies all the conditions imposed on $F$, in~particular $\DD H$ has positive diagonal entries and non-positive off\nobreakdash-\hspace{0pt}diagonal entries.  For $i = 1, 2, \ldots, m$ one has
\begin{multline*}
    H_i(y_1, \ldots, y_m) = F_i(y) \ge F_i(x) = F_i(x_1, \ldots, x_m, x_{m+1}, \ldots, x_d)
    \\
    \ge F_i(x_1, \ldots, x_m, y_{m+1}, \ldots, y_d) = H_i(x_1, \ldots, x_m),
\end{multline*}
where the second inequality holds because $x_{m+1} \le y_{m+1}, \dots, x_d \le y_d$ and $F_i$ is non\nobreakdash-\hspace{0pt}increasing in its $x_{m+1}, \ldots, x_d$ arguments.  But this contradicts our inductive assumption.
\end{proof}

It should be mentioned that Proposition~\ref{lem1} appeared as \cite[Lem.~5.1]{Hou21}, and, in the case of \ref{A-3-diff-strong}, as \cite[Prop.~1.1]{RH}.

\smallskip
We collect what we have just proved as
\begin{proposition}~
\label{prop:GN}
    \begin{itemize}
        \item
        \ref{AS-C1}, \ref{AS-e}, \ref{A-3-diff} and~\ref{GN} imply \ref{AS-C^0}--\ref{AS-3},
        \item
        \ref{AS-C1}, \ref{AS-e}, \ref{A-3-diff-strong} and~\ref{GN} imply \ref{AS-C^0}--\ref{AS-3}, \ref{AS4-strong} and \ref{AS-3-strong}.
    \end{itemize}
\end{proposition}

\section{Examples}
\label{sect:examples}
The purpose of the present section is to give simple examples relating to well-known models from theoretical ecology to illustrate the applicability of our results.

The examples given in this section cover the situation when $F$ is given by some closed\nobreakdash-\hspace{0pt}form formula.  Direct checking whether \ref{AS4} is satisfied appeared to be a hopeless task, so we use \ref{GN} instead.  On the other hand, it is easy to check~\ref{A-3-diff} or \ref{A-3-diff-strong}.  In the case of Beverton--Holt (Subsection~\ref{subsub:Beverton-Holt}) or Atkinson--Allen (Subsection~\ref{subsect:Atkinson-Allen}) $F$ is a (weakly) retrotone homeomorphism on the whole of $C_{+}$, whereas for the Ricker case (Subsection~\ref{subsect:2DRicker}) the `box' $\Lambda$ on which $F$ is a (weakly) retrotone homeomorphism cannot be too large.

%\subsection{One-dimensional examples}

\subsection{$d=1$: Beverton--Holt map}
\label{subsub:Beverton-Holt}
%\label{subsect:1D}

Let $n = 1$.  Observe that our assumptions have now the following meaning:
\begin{itemize}
[align=left]
\item[\ref{AS-C1}$_{\!\! 1}$]
\quad $f$ is of class $C^1$ on $[0, \infty)$, and $f(x) > 0$ for all $x \ge 0$;
\item[\ref{AS-e}$_1$] \quad  $f(1)= 1$;
\item[\ref{A-3-diff-strong}$_{\!\!1}$] \quad  $f'(x) < 0$ for all $x \ge 0$;
\item[\ref{GN}$_1$] \quad  $F'(x) = x f'(x) + f(x) > 0$ for $x \in (0, 1]$.
\end{itemize}
Take $f(x) = \frac{2}{1+x}$, so that $F(x) = \frac{2x}{1+x}$. The map $F$ has fixed points only at $x = 0, 1$. \ref{AS-C1}$_{\!\! 1}$, \ref{AS-e}$_1$ are easily checked and $f'(x) = - \frac{2}{(1+x)^2} < 0$ shows \ref{A-3-diff-strong}$_{\!1}$ is satisfied on $C_+ = [0, \infty)$.

Finally,
\begin{equation*}
    Z(x) = -\frac{x}{f(x)} f'(x) = \frac{x}{x+1} < 1, \quad x \in [0, \infty),
\end{equation*}
equivalently
\begin{equation*}
    F'(x) = - \frac{2 x}{(1 + x)^2} + \frac{2}{1 + x} = \frac{2}{(1 + x)^2} > 0, \quad x \in [0, \infty).
\end{equation*}
$F$ is a homeomorphism from $[0, \infty)$ onto $[0,2)$ and we may take $\Lambda = [0,1 + \varkappa]$, where $\varkappa > 0$ is arbitrary. The fixed point $x=1$ is the carrying simplex and it is easy to check that it attracts any $A \subset (0, \infty)$ with $0 \notin \bar{A}$.

Any $x \in (0, 1)$ has a unique backward orbit $\{\ldots, x_{-n-1}, x_{-n}, \ldots, x_{-1}, x\}$ strictly decreasing, as $n \to \infty$, to $0$.  Its forward orbit strictly increases, as $n \to \infty$, to $1$.

The forward orbit of any $x > 1$ strictly decreases to $1$.  No $x > 1$ has an (infinite) backward orbit, however for any $x \in (0, 2)$ there are $n_0 \ge 1$ and $x = F(x_{-1}) < x_{-1} = F(x_{-2}) < \ldots < x_{-n_0+1} = F(x_{-n_0}) < x_{-n_0}$.

\subsection{$d=2$: Planar Ricker map}
\label{subsect:2DRicker}
Consider the following planar Ricker model:
\begin{equation}
F(x,y) = (x e^{r(1-x-ay)},ye^{s(1-y-bx)}) \label{ricker}
\end{equation}
where $r, s > 0$, $a, b \geq 0$. Thus
\[
f(x,y) = (e^{r(1-x-ay)},e^{s(1-y-bx)}).
\]
The map $F \colon C_{+} \rightarrow C_{+}$ where $C_{+} = [0, \infty)^2$, always has $(0,0)$, $\ev_1 = (1,0)$ and $\ev_2 = (0,1)$ as fixed points, so \ref{AS-C1} and \ref{AS-e} are satisfied.
\begin{equation*}
    \DD F(x,y) =
    \left(\begin{array}{cc}
    e^{r(1-x-ay)}(1-rx) & -arx e^{r(1-x-ay)}
    \\
    -sb ye^{s(1-y-bx)}& e^{s(1-y-bx)}(1-sy)
    \end{array} \right)
\end{equation*}
and $Z(x,y) = \left(\begin{array}{cc}rx & arx \\
sb y&sy\end{array}
\right)
$.

Under our assumptions \ref{A-3-diff} is satisfied, and \ref{A-3-diff-strong} holds provided $a > 0$ and $b > 0$.  The eigenvalues $\lambda$ of $Z(x,y)$ satisfy $\lambda^2-(rx+sy)\lambda+rsxy(1-ab)=0$, so by the Jury condition the matrix $Z(x,y)$ has spectral radius less than one when $(x,y)\in C_{+}$ satisfies
\begin{equation}
rx + sy < 1 + rsxy(1-ab) < 2. \label{rick1}
\end{equation}
The bounded connected component $\Omega \subset C_{+}$ of the set defined by \eqref{rick1} is a simply connected region whose closure contains the points $(0,0), (1/r,0),(0,1/s)$. For the Ricker model to satisfy \ref{GN} on $[0,1]^2$ we require $[0,1]^2\subset \Omega$. Thus natural conditions are
\[
r,s < 1 \quad \text{and} \quad r+s < 1+rs(1-ab) < 2
\]
(since $\ev_1$, $\ev_2$, $(1,1)$ must belong to $\bar{\Omega}$).  Indeed, the former condition is redundant: $r + s < 1 + rs(1 - ab)$ is equivalent to $(1 - r) (1 - s) > r s a b$, from which it follows that $r$ and $s$ are either both $ > 1$ or both $< 1$, and $r + s < 2$ gives $r, s < 1$.

Since the boundary of $\Omega$ in $C_{+}$ is the graph of a decreasing function we can take $\Lambda = [0, 1 + \varkappa]^2$, where $\varkappa > 0$ is so small that
\begin{equation*}
   (1 + \varkappa) (r + s) < 1 + (1 + \varkappa)^2 r s (1 - a b) < 2.
\end{equation*}
In view of Proposition~\ref{lem1}(b) we have the following.
\begin{lemma}
  For the Ricker map \eqref{ricker} assumptions \ref{A-3-diff} and \ref{GN} are satisfied if $r, s > 0$, $a, b \ge 0$ and
 \begin{equation}
 r+s < 1+rs(1-ab)<2 \label{rick2}
\end{equation}
If $r, s, a, b > 0$ and \eqref{rick2} holds then \ref{A-3-diff-strong} and \ref{GN} are satisfied, so that there is a carrying simplex consisting of a curve that connects the two axial fixed points $(r,0)$ and $(0,s)$.
\end{lemma}

\begin{figure}[htb]
\centerline{
\includegraphics[width=2.5in]{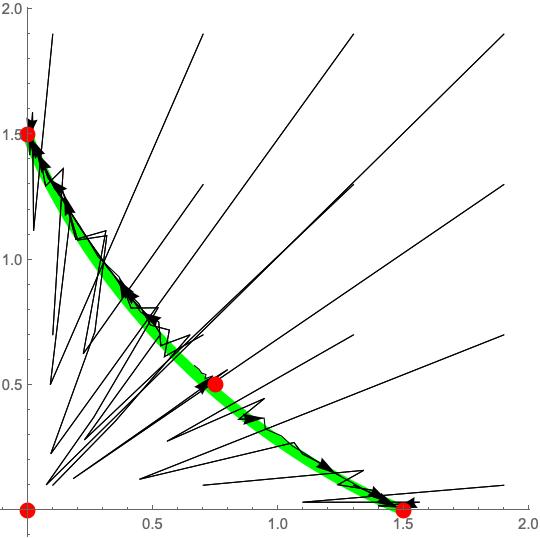}
}
\caption{The carrying simplex (green solid line) for the planar Ricker map (\ref{ricker}) when $a=3/2$, $b=4/3$, $r=s=3/2$.}
\label{fig1}
\end{figure}

\subsection{Arbitrary $d$:  Atkinson--Allen map}
\label{subsect:Atkinson-Allen}
The Atkinson--Allen map  is given by
\begin{equation}
\label{AAm}
    F_i(x) = b x_i+ \frac{2(1-b)x_i}{1+(Ax)_i}, \;\; i=1,\ldots,d,
\end{equation}
where $b \in (0,1)$ and  $A > 0$. A more slightly general map, which includes \eqref{AAm}, called the generalized competitive Atkinson--Allen map, was studied in \cite{Gyll2} where the map was shown to have a carrying simplex for a large range of positive parameter values. In \cite{Hou21}, these results are extended to cover cases where some parameters are zero.

If the range of $b$ is extended to include $b=0$ the map \eqref{AAm} reduces to a Leslie--Gower map, and this is known to have a carrying simplex for all $A\gg 0$ \cite{Gyll2} and all $A > 0$ with $a_{ii}>0$ when $i=1,\ldots,d$ \cite{Hou21}. If the range of $b$ is extended to include $b=1$, $F$ is the identity map and there is no carrying simplex.

For (\ref{AAm}) we have
\[
f_i(x)= b+ \frac{2(1-b)}{1+(Ax)_i}, \;\; i=1,\ldots,d.
\]
$f$ is continuously differentiable on $C_+$ and $f_i(x)\geq b>0$ so that assumption \ref{AS-C1}. is satisfied. It is also easy to check that $f_i(\ev_i)=1$ so assumption \ref{AS-3} is satisfied.
As
\[
\frac{\partial f_i(x)}{\partial x_j} = \left(\frac{-2(1-b)a_{ij}}{(1+(Ax)_i)^2}
\right) < 0 \; \text{ as } b\in (0,1)
\]
so that assumption \ref{A-3-diff-strong} is satisfied. Now $\rho(Z(x)) = \rho(\diag[x]^{-1} \, Z(x) \, \diag[x])\leq \norm{\diag[x]^{-1} \, Z(x) \, \diag[x]}$ for any matrix norm. Let us choose the $\ell_\infty-$norm, so that
\begin{align*}
\rho(Z(x))  & \leq \norm{\diag[x]^{-1}\, Z(x) \, \diag[x])}_\infty \\
& = \max_i \sum_{j=1}^3 \frac{1}{x_i}Z_{ij}(x) x_j\\
& =  \max_i \sum_{j=1}^d -\frac{x_j}{f_i}\frac{\partial f_i}{\partial x_j}\\
& = \max_i \sum_{j=1}^d -\frac{x_j}{b+ \frac{2(1-b)}{1+(Ax)_i}} \left(\frac{-2(1-b)a_{ij}}{(1+(Ax)_i)^2}
\right)\\
= &2(1-b)\max_i  \frac{(Ax)_i}{b+\frac{2(1-b)}{1+(Ax)_i}}\frac{1}{(1+(Ax)_i)^2}\\
\leq & 2(1-b) \max_{s\geq 0} \frac{s}{(1+s)(b(1+s) +2(1-b)}\\
=& \frac{1-\sqrt{(2-b) b}}{1-b} \in (0,1) \mbox{ when } b\in (0,1),
\end{align*}
which confirms assumption \ref{GN}. Hence the Atkinson--Allen map has a carrying simplex for all values of $b\in(0,1)$ and $A > 0$. Figure \ref{figAA} shows an example of the carrying simplex for the Atkinson--Allen map (\ref{AAm}) when $d=3$ and $A=\left( \begin{array}{ccc} 1& 1/2 & 1/3 \\ 1/3 & 1 & 1/2 \\ 1/2 & 1/3 &1 \end{array}\right)$   together with a selection of orbits, all of which are attracted to the carrying simplex.
\begin{figure}[htb]
\centerline{
\includegraphics[width=2.5in]{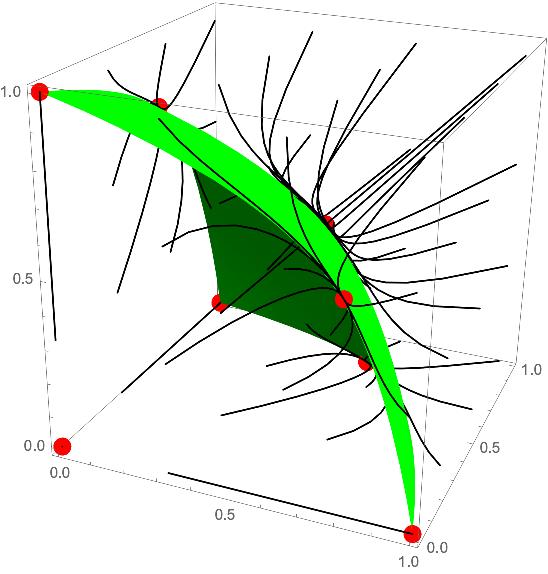}
}
\caption{The carrying simplex (green surface) for the Atkinson--Allen ($d=3$) map (\ref{AAm}) when $b=1/4$ and $A=\left( \begin{array}{ccc} 1& 1/2 & 1/3 \\ 1/3 & 1 & 1/2 \\ 1/2 & 1/3 &1 \end{array}\right)$.}
\label{figAA}
\end{figure}

\section{An application to competitive systems of ODEs}
\label{subsect:ODEs}
This section  is devoted to the case when $F$ is a time\nobreakdash-\hspace{0pt}one map of the semiflow generated by an autonomous competitive system of ODEs.  Even when the vector field is given by some formula, it is seldom possible to find a formula for $F$.  We have at our disposal, however, the Müller--Kamke  theory \cite{muller,kamke}, which allows us to show that $F$ is a (weakly) retrotone homeomorphism onto its image.  Further, \ref{AS3-mod} is a consequence of the competitivity property of the ODE system.

\medskip

Consider an autonomous system of ordinary differential equations of the form
\begin{equation}
\label{eq:ODE}
    \frac{\dd x_i}{\dd t} = x_i g_i(x), \quad i \in \{1, \ldots, d \}, \ x = (x_1, \ldots, x_d) \in C_{+},
\end{equation}
where $g = (g_1, \ldots, g_d) \colon  C_{+} \to \RR^d$.  Such systems are called \emph{Kolmogorov systems of ODEs}.

We denote $P = \diag[\Id] \, g$, $P = (P_1, \dots, P_d)$.  $\mathrm{D} g$ denotes the derivative matrix of $g$ (if it exists).

The first assumption is
\begin{enumerate}[label=\textup{H\arabic*},ref=\textup{H\arabic*},align=left]
    \item \label{ODE-C^1}
    \quad $g$ is of class $C^1$.
\end{enumerate}
For $x \in C_{+}$ denote by $\Phi(\cdot; x)  = (\Phi_1(\cdot; x), \dots, \Phi_d(\cdot; x))$ the (unique by \ref{ODE-C^1}) nonextendible solution of \eqref{eq:ODE} taking value $x$ at time $0$.

Observe that if $x \in (C_I)_{++}$ for some $I \subset \{1, \dots, d\}$ then $\Phi(t; x) \in (C_I)_{++}$ as long as it exists. Indeed, it follows from the Kolmogorov form of~\eqref{eq:ODE} and the uniqueness of solutions that $\Phi_j(t; x) = 0$ for all $j \in \{1, \dots, d\} \setminus I$.

As a consequence, since $(C_I)_{++}$ is a relatively open subset of $(C_I)_{+}$, it follows from the extension theorem for ODEs that the nonextendible solution $\Phi(t; x)$ is defined for $t \in (\taumin(x), \taumax(x))$ with $\taumin(x) < 0 < \taumin(x)$.

The local flow (continuous-time dynamical system) $\Phi$ on a subset $D(\Phi)$ of  $\RR \times C_{+}$ satisfies the following properties:
\begin{enumerate}[label=\textup{(EF\arabic*)},ref=\textup{(EF\arabic*)},align=left]
\setcounter{enumi}{-1}
    \item \label{EF0}
    $D(\Phi)$ is an open subset of $\RR \times C_{+}$ containing $\{\, (0, x) : x \in C_{+}\,\}$, and $\Phi$ is $C^1$;
    \item \label{EF1}
    $\Phi(0; x) = x$ for all $x \in C_{+}$;
    \item \label{EF2}
    \begin{equation*}
        \Phi(t_2; \Phi(t_1; x)) = \Phi(t_1 + t_2; x),
    \end{equation*}
    which is to be interpreted so that if for some $t_1, t_2 \in \RR$ and $x \in C_{+}$ one of the sides as well as $\Phi(t_1, x)$ exist then the other side exists, too, and the equality holds.
\end{enumerate}

\medskip
By \ref{EF1}--\ref{EF2}, we have the formula
\begin{equation}
\label{eq:process-inverse}
    \Phi(- t, \Phi(t; x)) = x,
\end{equation}
provided that $\Phi(t; x)$ exists.  In~particular, it follows that for each $t > 0$ the map $\Phi(t; \cdot)$ is a $C^1$ diffeomorphism onto its image.

We will give now a representation of its inverse in terms of a solution of system~\eqref{eq:ODE} with $P$ replaced with $- P$.  For $x \in C_{+}$ and fixed $t \in \RR$ such that $\Phi(t; x)$ exists we put $\phi(s) := \Phi(s; x)$, $s \in (\taumin(x), \taumax(x))$.  Let $\chi(\theta) := \phi(t - \theta)$.  There holds, where $\dot{} = \frac{\mathrm{d}}{\mathrm{d}s}$,
\begin{equation*}
    \frac{\mathrm{d}\chi}{\mathrm{d}\theta}(\theta) = - \dot{\phi}(t - \theta) = - P(\phi(t -\theta)) = - P(\chi(\theta)), \qquad \theta \in (t - \taumax(x), t - \taumin(x)).
\end{equation*}
We formulate the result of the above calculation as the following.
\begin{lemma}
\label{lm:inverse-ODE}
    Assume that $x \in C_{+}$ and $t \in \RR$ are such that $\Phi(t; x)$ exists.  Then $x$ is equal to the value at time $\theta = t$ of the solution of the initial value problem
\begin{equation}
\label{eq:ODE-retro}
     \begin{cases}
         \displaystyle \frac{\mathrm{d}\xi_i}{\mathrm{d}\theta} = - \xi_i g_i(\xi), \quad i \in \{1, \ldots, d \}, \ \xi \in C_{+},
         \\[1.5ex]
         \xi(0) = \Phi(t; x).
     \end{cases}
\end{equation}
\end{lemma}

The next assumptions are:
\begin{enumerate}[label=\textup{H\arabic*},ref=\textup{H\arabic*},align=left]
\setcounter{enumi}{1}
    \item \label{ODE-axes}
    \quad for each $1 \le i \le d$ there holds $g_i(\ev_i) = 0$;
    \item \label{ODE-competitive}
    \quad $\mathrm{D}g(x) \le 0$ with its diagonal entries negative, for all $x \in C_{+}$.
\end{enumerate}

Sometimes instead of \ref{ODE-competitive} we make the following stronger assumption:
\begin{enumerate}[label=\textup{H\arabic*$'$},ref=\textup{H\arabic*$'$},align=left]
\setcounter{enumi}{2}
    \item \label{ODE-competitive-strong}
    \quad $\mathrm{D}g(x) \ll 0$, for all $x \in C_{+}$.
\end{enumerate}

We deduce from \ref{ODE-axes} and the negativity of the diagonal terms in \ref{ODE-competitive} that the vector field restricted to an $i$\nobreakdash-\hspace{0pt}axis takes value $0$ at $0$ and $\ev_i$, has positive direction between $0$ and $\ev_i$ and negative direction to the right of $\ev_i$.  In view of that, from the nonpositivity property of the off\nobreakdash-\hspace{0pt}diagonal entries it follows that if $x \in \prod\limits_{i=1}^d [0, 1 + \varkappa]$ then $\Phi(t; x) \in \prod\limits_{i=1}^d [0, 1 + \varkappa]$ for $t \ge 0$ as long as $\Phi(t; x)$ exists, for any $\varkappa > 1$.  In~particular, from the standard ODEs extension theorem we obtain that $\Phi(t; x)$ exists for any $t \ge 0$ and any $x \in C_{+}$.

\medskip

We now set the map $F(\cdot) = \Phi(1; \cdot)$.

\medskip

For $x \in C_{++}$ and $i \in \{1, \ldots, d\}$ we write
\begin{equation}
\label{eq:ODE-f}
    f_i(x) = \frac{F_i(x)}{x_i} = \exp{\biggl(\int_{0}^{1} g_i(\Phi(\tau; x)) \, \mathrm{d}\tau \biggr)}.
\end{equation}
By the continuous dependence of solutions of ODEs on initial values and the continuity of $g_i$, the formula~\eqref{eq:ODE-f} for $f_i$ extends to the whole of $C_{+}$.  Since $F$ is continuous on $C_{+}$, we have that $F = \diag[\Id] \, f$ with $f$ given by~\eqref{eq:ODE-f} on $\partial C_{+}$, too.  Therefore \ref{AS-C^0} is fulfilled.

The fulfillment of \ref{AS-e} follows directly from \ref{ODE-axes}.

As $F(\cdot) = \Phi(1; \cdot)$ is a diffeomorphism onto its image, \ref{AS-homeo} is satisfied.

\begin{lemma}
\label{lm:ODE-retrotone}
    For any $t > 0$, $\Phi(t; \cdot)$ is weakly retrotone in $C_{+}$, and, under~\ref{ODE-competitive-strong}, is retrotone in $C_{+}$.
\end{lemma}
\begin{proof}
  Since $\Phi(t; \cdot)$ is injective, proving the statement is just showing monotonicity of its inverse on the faces of $C_{+}$.  This, by Lemma~\ref{lm:inverse-ODE}, follows along the lines of the proof of~\cite[Prop.~2.2]{HirschCS}, which is in turn a consequence of the Müller--Kamke theorem, see, e.g., \cite[Thm.~2]{Wal} and \cite[Thm.~4]{Wal} for \ref{ODE-competitive-strong}.
\end{proof}
We have thus obtained \ref{AS-weak-retro}, and \ref{AS-retro} for \ref{ODE-competitive-strong}.
Consequently, \ref{AS4} is satisfied, and, under \ref{ODE-competitive-strong}, \ref{AS4-strong} is satisfied, for all $\varkappa > 0$.

\medskip
We proceed now to proving \ref{AS3-mod}.  For $x, y \in C_{+}$ we have, by~\eqref{eq:ODE-f}, for any $i \in \{1, \ldots, d \}$,
\begin{equation*}
    \frac{f_i(x)}{f_i(y)} = \exp\biggl(\int_{0}^{1} \bigl(g_i(\Phi(s, x) - g_i(\Phi(s,y)\bigr) \, \mathrm{d}s\biggr).
\end{equation*}
For each $s \in [0, 1]$ there holds
\begin{equation}
\label{eq:aux}
    g(\Phi(s, x)) - g(\Phi(s,y)) = \biggl( \int_{0}^{1} \mathrm{D} g\bigl(\theta \Phi(s, x) + (1 - \theta) \Phi(s, y)\bigr) \, \mathrm{d}\theta \biggr) (\Phi(s, x) - \Phi(s, y)),
\end{equation}
It follows from \ref{ODE-competitive} that the matrix on the right\nobreakdash-\hspace{0pt}hand side of~\eqref{eq:aux} has nonpositive entries, and negative diagonal entries.  Under~\ref{ODE-competitive-strong}, that matrix has all entries negative.

Assume now that $x, y \in C_{+}$ are such that $F(x) < F(y)$.  Applying Lemma~\ref{lm:ODE-retrotone} and comparing the signs of the entries/coordinates on the right\nobreakdash-\hspace{0pt}hand side of~\eqref{eq:aux} gives the desired inequalities for
$g_i(\Phi(s, x)) - g_i(\Phi(s,y))$, $s \in [0, 1)$.

\medskip

We can thus apply Theorem~\ref{prop1_steve} to obtain the existence of the carrying simplex [weak carrying simplex] for $F$:
\begin{theorem}
\label{ODE_CS}
Under the assumptions \ref{ODE-C^1}, \ref{ODE-axes} and  \ref{ODE-competitive} \textup{[}resp.\ \ref{ODE-competitive-strong}\textup{]} the competitive system of ordinary differential equations \eqref{eq:ODE} has a weak carrying simplex \textup{[}resp.\ carrying simplex\textup{]}.
\end{theorem}

\section*{Acknowledgements}
We are grateful to the referees for their insightful remarks.

\section*{Funding}
J. Mierczyński is partially supported by the Faculty of Pure and Applied Mathematics, Wrocław University of Science and Technology.

\section*{Disclosure statement}
The authors report there are no competing interests to declare.

\end{document}